\newcommand{\CC}{{\mathbb C}}
\newcommand{\RR}{{\mathbb R}}
\newcommand{\Cont}{{\mathscr C}}
\newcommand{\NN}{{\mathbb N}}
\newcommand{\OO}{{\mathscr O}}
\DeclareMathOperator{\Aut}{Aut}
\DeclareMathOperator{\id}{id}
\DeclareMathOperator{\dist}{dist}
\newtheorem{theorem}{\bf Theorem}
\newtheorem{lemma}{\bf Lemma}
\newtheorem{proposition}{\bf Proposition}
\newtheorem{corollary}{\bf Corollary}
\newtheorem{definition}{\bf Definition}
\newtheorem*{ntheorem}{\bf Main Theorem}
\newtheorem*{ncorollary}{\bf Corollary}
\begin{document}

\title{A strong Oka principle for embeddings\\ of some planar domains into $\CC\times\CC^*$}

\author{Tyson Ritter}
\address{Tyson Ritter, School of Mathematical Sciences, University of Adelaide, Adelaide SA 5005, Australia}
\email{tyson.ritter@adelaide.edu.au}

\subjclass[2010]{Primary 32Q40.  Secondary 32E10, 32H02, 32H35, 32M17, 32M25, 32Q28.}

\date{16 November 2010.}

\keywords{Holomorphic embedding, Riemann surface, Oka principle, Stein manifold, elliptic manifold, acyclic map, circular domain, Fatou-Bieberbach domain.}

\begin{abstract}
Gromov, in his seminal 1989 paper on the Oka principle, introduced the notion of an elliptic manifold and proved that every continuous map from a Stein manifold to an elliptic manifold is homotopic to a holomorphic map. We show that a much stronger Oka principle holds in the special case of maps from certain open Riemann surfaces called circular domains into $\CC\times\CC^*$, namely that every continuous map is homotopic to a proper holomorphic embedding. An important ingredient is a generalisation to $\CC\times\CC^*$ of recent results of Wold and Forstneri\v c on the long-standing problem of properly embedding open Riemann surfaces into $\CC^2$, with an additional result on the homotopy class of the embeddings. We also give a complete solution to a question that arises naturally in L\'arusson's holomorphic homotopy theory, of the existence of acyclic embeddings of Riemann surfaces with abelian fundamental group into 2-dimensional elliptic Stein manifolds.
\end{abstract}

\maketitle

\section{Introduction}
\label{sec_introduction}
\noindent By a theorem of Remmert-Narasimhan-Bishop \cite{Bishop:1961,Narasimhan:1960,Remmert:1956}, every Stein manifold of dimension $n$ may be properly holomorphically embedded into $\CC^{2n+1}$. (Throughout this paper, all embeddings will be both proper and holomorphic.) Later refinements to this theorem by Eliashberg and Gromov \cite{Eliashberg:1992}, followed by Sch\"urmann \cite{Schurmann:1997}, show that every Stein manifold of dimension $n > 1$ can be embedded into $\CC^{[3n/2]+1}$, but the proof fails in the case $n = 1$. Whether every Stein manifold of dimension 1, that is, every open Riemann surface, can be embedded into $\CC^2$, is a long-standing and important unsolved question of complex geometry.

Kasahara and Nishino \cite{Stehle:1972} were the first to show that the open disc embeds into $\CC^2$, making use of proper open subsets of $\CC^2$ biholomorphic to $\CC^2$, known as Fatou-Bieberbach domains. Laufer \cite{Laufer:1973} also used Fatou-Bieberbach domains to show that all non-degenerate annuli embed into $\CC^2$, while Alexander \cite{Alexander:1977} gave an explicit embedding of both the disc and punctured disc into $\CC^2$ using the elliptic modular function. Globevnik and Stens\o nes \cite{Globevnik:1995} proved the more general result that every bounded, finitely connected planar domain without isolated boundary points embeds into $\CC^2$ (such domains are biholomorphic to what we call \emph{circular domains} in Section~\ref{sec_strongOka}). Recently, Wold \cite{Wold:2006,Wold:2006a} introduced a more general method that allowed him to embed a larger class of open Riemann surfaces into $\CC^2$, culminating in a result of Forstneri\v c and Wold \cite{Forstneric:2009} that states if a compact bordered Riemann surface can be embedded into $\CC^2$, then so too can the open Riemann surface which is its interior. By a \emph{bordered Riemann surface} we mean a two-dimensional smooth manifold with boundary, equipped with a complex structure, and whose boundary is thus a smooth one-dimensional manifold, namely a disjoint union of circles and lines.

A natural extension of the embedding problem for Stein manifolds, and open Riemann surfaces in particular, is to consider embeddings which are \emph{acyclic}, meaning that they induce a homotopy equivalence between their source and target. Of course it then becomes necessary to permit more general manifolds as targets, but we still wish for them to be similar (in some sense) to affine space. This leads to the concept of an elliptic complex manifold.

Elliptic manifolds are those complex manifolds $X$ that permit a holomorphic vector bundle $E\to X$ with a holomorphic map $s : E\to X$ (called a \emph{dominating spray}) such that for every $x \in X$, $s|_{E_x}$ maps $0 \in E_x$ to $x \in X$ and is a submersion at $0$. Elliptic manifolds were first introduced by Gromov in his study of the Oka principle \cite{Gromov:1989}. The Oka principle refers to a collection of results that state there are only topological obstructions to solving certain holomorphically defined problems involving Stein manifolds. In \cite{Gromov:1989}, Gromov proved the following result, sometimes referred to as the \emph{weak Oka property} for elliptic manifolds:

\begin{quote}
The inclusion of the space of holomorphic maps from a Stein manifold to an elliptic manifold into the space of continuous maps is a weak homotopy equivalence. In particular, every continuous map from a Stein manifold to an elliptic manifold is homotopic to a holomorphic map.
\end{quote}
 
This result, together with various other stronger Oka properties since proved in the literature (for example, see the recent survey \cite{Forstneric:2010}), indicate that elliptic manifolds can be thought of as having many maps into them from Stein manifolds. For this reason, elliptic manifolds are suitable targets for acyclic embeddings of Stein manifolds.

Despite the importance of elliptic manifolds, relatively few examples are known and little is understood about their topology in general. One by-product of investigating acyclic embeddings of Stein manifolds into elliptic manifolds is that the existence of such embeddings would help determine the possible homotopy types that elliptic manifolds can have. Indeed, if it were possible to embed every Stein manifold acyclically into an elliptic manifold, then because Stein manifolds are known to have all the homotopy types of smooth manifolds, elliptic manifolds would share this property.

As additional motivation we mention L\'arusson's holomorphic homotopy theory \cite{Larusson:2004,Larusson:2005}, in which ellipticity is a sufficient condition for a complex manifold to be fibrant. Given a Stein manifold, we may ask whether it possesses a fibrant model that is represented by a complex manifold, and this is almost exactly the question of whether there exists an acyclic embedding of the Stein manifold into an elliptic Stein manifold.

We now describe the content of the paper.

By a \emph{circular domain} we mean a domain given by removing a finite number of closed, pairwise disjoint discs from the open unit disc (see Definition~\ref{def_circularDomain}). Note that in this paper we do not permit our circular domains to have punctures. Our main result (Theorem~\ref{thm_strongOka}) is what we term a \emph{strong Oka property} for embeddings of circular domains into the elliptic Stein manifold $\CC\times\CC^*$:
\begin{ntheorem}
Let $X$ be a circular domain. Then every continuous map $X \to \CC\times\CC^*$ is homotopic to a proper holomorphic embedding $X \to \CC\times\CC^*$.
\end{ntheorem}

In order to prove the main theorem, we begin in Section~\ref{sec_Wold} by modifying techniques of Wold to obtain what we refer to as a \emph{Wold embedding theorem} for embedding certain Riemann surfaces into $\CC\times\CC^*$ (Theorem~\ref{thm_Wold}). This result relies upon the fact that $\CC\times\CC^*$ has the \emph{density property} (Definition~\ref{def_densityProperty}), and therefore that the Anders\'en-Lempert theorem holds regarding the approximation of certain maps by automorphisms of $\CC\times\CC^*$ (Theorem~\ref{thm_AndersenLempertHolConvex}). Fatou-Bieberbach domains in $\CC\times\CC^*$ (Definition~\ref{def_FatouBieberbachDomain}) also play an essential role in the proof of Theorem~\ref{thm_Wold}. 

In Section~\ref{sec_strongOka} we use the Wold embedding theorem for $\CC\times\CC^*$ to prove a variant of the embedding theorem of Forstneri\v c and Wold, for embeddings of Riemann surfaces into $\CC\times\CC^*$ (Theorem~\ref{thm_embedRiemannSurface}). As the target is no longer contractible, it makes sense to compare the homotopy classes of the given embedding of the bordered Riemann surface and the embedding of its interior produced by the theorem, and we show that these classes are in fact equal. Using this result we then prove the strong Oka property for embeddings of circular domains into $\CC\times\CC^*$ (Theorem~\ref{thm_strongOka}). As a corollary of the results in Section~\ref{sec_strongOka}, we obtain a complete solution to the problem of acyclically embedding annuli (possibly degenerate), that is, open Riemann surfaces with non-trivial abelian fundamental group, into the elliptic Stein manifold $\CC\times\CC^*$ (Corollary~\ref{cor_acyclicEmbedding}). We obtain the following result (Corollary~\ref{cor_acyclicEmbedding2}) after combining Corollary~\ref{cor_acyclicEmbedding} with the embedding of the open disc into $\CC^2$ given by Kasahara and Nishino:

\begin{ncorollary}
Every open Riemann surface with abelian fundamental group embeds acyclically into a 2-dimensional elliptic Stein manifold.
\end{ncorollary}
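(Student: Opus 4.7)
The plan is to classify open Riemann surfaces with abelian fundamental group and then cite an existing embedding for each type. Every open Riemann surface is homotopy equivalent to a graph and hence has a free fundamental group; the abelian hypothesis therefore forces $\pi_1$ to be either trivial or isomorphic to $\ZZ$. By the uniformisation theorem, the simply connected cases are just $\CC$ and $\DD$, while the cases with $\pi_1 \cong \ZZ$ are the possibly degenerate annuli $\CCx$, $\DDx$, and the proper annuli $\{r<|z|<1\}$ for $0<r<1$.

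For the three annulus-type surfaces, I would invoke Corollary~\ref{cor_acyclicEmbedding} directly: it already supplies acyclic proper holomorphic embeddings into $\CC\times\CCx$, which is a 2-dimensional elliptic Stein manifold.

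For the two simply connected surfaces, the natural target is $\CC^2$, which is an elliptic Stein manifold of dimension two. Because $\CC^2$ is contractible, any proper holomorphic embedding of a contractible source into it is automatically a homotopy equivalence and hence acyclic. For $\CC$, I would take the linear proper embedding $z \mapsto (z,0)$. For $\DD$, I would cite the Kasahara-Nishino embedding $\DD \hookrightarrow \CC^2$ via a Fatou-Bieberbach domain, as highlighted in the introduction.

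There is no genuine obstacle remaining: all the analytic work is absorbed into Corollary~\ref{cor_acyclicEmbedding} (built on the strong Oka property for circular domains and the Wold embedding theorem for $\CC\times\CCx$) and into the classical Kasahara-Nishino construction. The corollary is thus essentially an assembly result: partition the surfaces according to the isomorphism type of $\pi_1$, observe that simple connectivity combined with the contractibility of $\CC^2$ makes acyclicity automatic, and appeal to the appropriate embedding in each case.
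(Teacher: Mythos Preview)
Your proposal is correct and matches the paper's own argument essentially verbatim: classify via the freeness of $\pi_1$ for open Riemann surfaces, invoke Corollary~\ref{cor_acyclicEmbedding} for the $\pi_1\cong\ZZ$ cases, and for the simply connected cases $\CC$ and $\DD$ use the trivial embedding into $\CC^2$ and Kasahara--Nishino respectively, noting that $\CC^2$ is elliptic Stein and contractible. The only cosmetic difference is that you spell out the uniformisation step more explicitly than the paper does.
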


To the best of my knowledge a proof of the Anders\'en-Lempert theorem for Stein manifolds with the density property has not appeared in the literature, so for the benefit of the reader a detailed proof is provided in an appendix, following the proof for $\CC^n$ given by Forstneri\v c and Rosay \cite{Forstneric:1993,Forstneric:1994}.

I wish to thank Finnur L\'arusson for many helpful discussions during the preparation of this paper, and Frank Kutzschebauch for introducing to me the embedding techniques of Wold. Franc Forstneri\v c is also kindly thanked for providing access to a draft of his forthcoming book.

\section{A Wold embedding theorem for $\CC\times\CC^*$}
\label{sec_Wold}
\noindent Let $X$ be a non-compact Riemann surface that is the interior of a bordered Riemann surface $\overline{X}$ with a finite number of boundary components, all of which are non-compact. As mentioned in Section~\ref{sec_introduction}, by a \emph{bordered Riemann surface} we mean a two-dimensional smooth manifold with boundary, equipped with a complex structure. In this section we show that if $\overline{X}$ can be properly holomorphically embedded into $\CC\times\CC^*$ in such a way that the image of its boundary satisfies a certain \emph{nice projection property}, then $X$ itself can be properly holomorphically embedded into $\CC\times\CC^*$. Recall that all embeddings we discuss are both holomorphic and proper.

Let $\Delta_r \subset {\mathbb C}$ denote the open disc of radius $r > 0$ centred at the origin. For $r > 0$ let $A_r \subset {\mathbb C}^*$ denote the annulus $A_r = \{ w \in {\mathbb C}^* : 1/(r+1) < |w| < r + 1\}$. We call the open set $P_r = \Delta_r \times A_r \subset {\mathbb C} \times {\mathbb C}^*$ a \emph{cylinder} of radius $r$. By taking a sequence of cylinders whose radii form an increasing unbounded sequence we obtain an exhaustion of ${\mathbb C} \times {\mathbb C}^*$ by relatively compact open sets whose closures $\overline{P}_r$ are ${\mathscr O}({\mathbb C}\times{\mathbb C}^*)$-convex.

We use $\pi_1 : {\mathbb C}\times{\mathbb C}^* \to {\mathbb C}$ and $\pi_2 : {\mathbb C}\times{\mathbb C}^* \to {\mathbb C}^*$ to denote projection onto the first and second components respectively. The following definition is essentially the same as that used implicitly in \cite{Wold:2006,Wold:2006a} and stated explicitly in \cite{Kutzschebauch:2009}, except that we have added a condition relating to the injectivity of $\pi_1$ on a certain set. We believe this condition is required in the proofs of Lemma~2.2 in \cite{Kutzschebauch:2009} and Lemma~1 in \cite{Wold:2006a}.

\begin{definition}
\label{niceprojectionproperty}
Let $\gamma_1, \dots, \gamma_m$ be pairwise disjoint, smoothly embedded curves in ${\mathbb C}\times{\mathbb C}^*$, where each $\gamma_j$ maps either $[0,\infty)$ or $(-\infty,\infty)$ into ${\mathbb C}\times{\mathbb C}^*$. For each $j$ let $\Gamma_j\subset \CC\times\CC^*$ be the image of $\gamma_j$ and set $\Gamma = \bigcup\limits_{j=1}^{m} \Gamma_j$. We say that the collection $\gamma_1,\dots,\gamma_m$ has the \emph{nice projection property} if there is a holomorphic automorphism $\alpha \in \Aut({\mathbb C}\times{\mathbb C}^*)$ such that, if $\beta_j = \alpha\circ\gamma_j$ and $\Gamma' = \alpha(\Gamma)$, the following conditions hold:
\begin{enumerate}
\item[{\rm (1)}] \label{niceprojectionproperty1} $\lim\limits_{|t|\to\infty}|\pi_1(\beta_j(t))| = \infty$ for $j = 1,\dots,m$.
\item[{\rm (2)}] \label{niceprojectionproperty2} There exists $M \ge 0$ such that for all $r \geq M$:
\begin{enumerate}
\item[{\rm (a)}] ${\mathbb C}\setminus(\pi_1(\Gamma')\cup\overline\Delta_r)$ does not contain any relatively compact connected components.
\item[{\rm (b)}] $\pi_1$ is injective on $\Gamma' \setminus \pi_1^{-1}(\Delta_r)$.
\end{enumerate}
\end{enumerate}
\end{definition}

It is immediate from the definition that the nice projection property is invariant under automorphisms of $\CC\times\CC^*$. It is also clear that the nice projection property is independent of the parametrisation of $\gamma_1,\dots,\gamma_m$ so that we may refer to the set $\Gamma$ as having the nice projection property. Finally, we note that condition (\ref{niceprojectionproperty1}) implies that the restriction of $\pi_1$ to $\Gamma'$ is a proper map into ${\mathbb C}$.

Let $n > 1$. A \emph{Fatou-Bieberbach domain} is a proper open subset of $\CC^n$ that is biholomorphic to $\CC^n$. In the following definition we generalise this concept to domains in $\CC\times\CC^*$.

\begin{definition}
\label{def_FatouBieberbachDomain}
Let $\Omega$ be a proper open subset of $\CC\times\CC^*$. We say $\Omega$ is a \emph{Fatou-Bieberbach domain in $\CC\times\CC^*$} if there exists a biholomorphism $\phi : \Omega \to \CC\times\CC^*$.
\end{definition}

The main result of this section is the following theorem, proved by Wold for $\CC^2$ in \cite{Wold:2006,Wold:2006a}, and with interpolation by Kutzschebauch, L\o w and Wold in \cite{Kutzschebauch:2009}.

\begin{theorem}[Wold embedding theorem for $\CC\times\CC^*$]
\label{thm_Wold}
Let $X$ be an open Riemann surface and $K \subset X$ be a compact set. Suppose that $X$ is the interior of a bordered Riemann surface $\overline{X}$ whose boundary components are non-compact and finite in number.  If there is an embedding $\psi : \overline{X} \to \CC\times\CC^*$ such that $\psi(\partial\overline{X})$ has the nice projection property, then there exists an embedding $\sigma:X \to \CC\times\CC^*$ that approximates $\psi$ uniformly on $K$. In fact, there is a Fatou-Bieberbach domain $\Omega$ in $\CC\times\CC^*$ and a biholomorphism $\phi:\Omega \to \CC\times\CC^*$ such that $\psi(X) \subset \Omega$ and $\psi(\partial\overline{X}) \subset \partial \Omega$, and then we may take $\sigma = \phi \circ \psi|_X$.
\end{theorem}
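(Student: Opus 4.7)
First I would normalise by replacing $\psi$ with $\alpha \circ \psi$, where $\alpha$ is the automorphism from Definition~\ref{niceprojectionproperty}; this is legitimate because the desired conclusion is invariant under postcomposition with $\Aut(\CC\times\CC^*)$. Henceforth we may assume $\Gamma := \psi(\partial\overline{X})$ itself satisfies properties~(1) and~(2) directly, and in particular $\pi_1$ is proper and eventually injective on $\Gamma$. I then fix a nested exhaustion $X_1 \Subset X_2 \Subset \cdots$ of $X$ by relatively compact bordered subsurfaces with $K \subset X_1$, chosen so that each $\psi(\overline{X_n})$ is $\OO(\CC\times\CC^*)$-convex, together with the cylindrical exhaustion $\overline{P}_{r_n}$ with $r_n \nearrow \infty$.

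The plan is the standard Wold--Forstneri\v c Fatou--Bieberbach scheme. I inductively construct automorphisms $\Phi_n = \theta_n \circ \cdots \circ \theta_1 \in \Aut(\CC\times\CC^*)$ such that $\Phi_n(\Gamma) \cap \overline{P}_{r_n} = \emptyset$ and $\theta_n$ is $2^{-n}$-close to the identity on $\Phi_{n-1}(\psi(\overline{X_n})) \cup \overline{P}_{r_{n-1}}$. A standard telescoping argument then shows that the basin
\[ \Omega \ := \ \{\, x \in \CC\times\CC^* \, : \, \sup\nolimits_n |\Phi_n(x)| < \infty \,\} \]
is open, contains $\psi(X)$, and that $(\Phi_n)$ converges uniformly on compact subsets of $\Omega$ to a biholomorphism $\phi : \Omega \to \CC\times\CC^*$; the first condition forces $\psi(\Gamma) \subset \partial\Omega$, while the closeness to the identity on $\psi(K) \subset \psi(\overline{X_n})$ at every stage gives $\phi$ uniformly close to $\id$ on $\psi(K)$, so $\sigma := \phi \circ \psi|_X$ approximates $\psi$ there.

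The heart of the argument, and the principal obstacle, is the inductive step. Given $\Phi_{n-1}$, I need $\theta_n \in \Aut(\CC\times\CC^*)$ approximately the identity on the $\OO(\CC\times\CC^*)$-convex set $\Phi_{n-1}(\psi(\overline{X_n})) \cup \overline{P}_{r_{n-1}}$ while pushing each arc of $\Phi_{n-1}(\Gamma)$ still meeting $\overline{P}_{r_n}$ out of that cylinder. I would build a model injective holomorphic map $h$ on a neighborhood of
\[ L_n \ := \ \Phi_{n-1}(\psi(\overline{X_n})) \cup \overline{P}_{r_{n-1}} \cup \bigl( \Phi_{n-1}(\Gamma) \cap \overline{P}_{r_n} \bigr) \]
by setting $h = \id$ on the first two pieces and $h(z,w) = (z, w\, e^{g(z)})$ near the arcs, where $g$ is a polynomial chosen to be small on $\pi_1(\overline{P}_{r_{n-1}})$ and to have sufficiently large real part on $\pi_1$ of the arcs to evict $|w|$ from $A_{r_n}$. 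Such $g$ is well-defined on the relevant disjoint compact set thanks to condition~(2)(b) (injectivity of $\pi_1$ on the arcs) and is approximable by a polynomial on $\CC$ by Mergelyan's theorem, whose hypothesis is supplied by condition~(2)(a); properness from condition~(1) controls the geometry of the arcs. The most delicate piece is verifying the $\OO(\CC\times\CC^*)$-convexity of $L_n$ and interpolating between $\id$ and $h$ by an isotopy of injective holomorphic maps, as needed for Theorem~\ref{thm_AndersenLempertHolConvex}; this follows Wold's treatment in $\CC^2$ with the cylinders $P_r$ playing the role of balls. Andersén--Lempert then approximates $h$ by the automorphism $\theta_n$, closing the induction.
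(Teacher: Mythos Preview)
Your overall Fatou--Bieberbach scheme is correct and matches the paper's, but the inductive step has two real gaps.

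First, a single Anders\'en--Lempert step cannot control the unbounded tails of the boundary curves. You approximate $h$ by $\theta_n$ only on the compact set $L_n$; on $\Phi_{n-1}(\Gamma) \setminus \overline{P}_{r_n}$ the automorphism $\theta_n$ is completely uncontrolled and may move portions of those tails back into $\overline{P}_{r_n}$, destroying exactly the condition $\Phi_n(\Gamma)\cap\overline{P}_{r_n}=\varnothing$ that you need. No compact enlargement of $L_n$ can cure this, since $\Gamma$ is unbounded. The paper's Lemma~\ref{existauto} resolves this with a two-stage construction: an Anders\'en--Lempert automorphism $\alpha$ first pushes the compact inner arcs out of the slab $\overline\Delta_r \times \CC^*$ (and inevitably loses control of the tails), and then a \emph{global} shear $\beta(z,w) = (z, w e^{g(z)})$, which is an exact element of $\Aut(\CC\times\CC^*)$ rather than an approximant, repairs the damage on all of $\Gamma$ at once.

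Second, your proposed model on the arcs cannot be a shear. You invoke condition (2)(b) for injectivity of $\pi_1$ on the arcs, but (2)(b) holds for the normalised $\Gamma$, not for $\Phi_{n-1}(\Gamma)$ after even one step; and even for $\Gamma$ itself it only gives injectivity outside $\Delta_M$, whereas your arcs may lie over $\overline\Delta_M$. Likewise, your requirement that $g$ be small on $\overline\Delta_{r_{n-1}}$ yet have large real part on $\pi_1$ of the arcs is inconsistent whenever an arc projects into $\overline\Delta_{r_{n-1}}$; the arcs avoid $\overline{P}_{r_{n-1}}$ in $\CC\times\CC^*$, but nothing prevents their $\pi_1$-images from landing there. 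The paper re-normalises inside the lemma at each call, models the motion of the inner arcs by a contraction-and-slide isotopy (Corollary~\ref{cor_contractCurve}) that needs no $\pi_1$-injectivity, and reserves the Mergelyan-built shear for the second stage, where the remaining bad points genuinely lie over $\CC \setminus \overline\Delta_r$ and conditions (2)(a)--(b) can be used.
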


In Section~\ref{sec_strongOka} we will show that the embedding $\sigma$ so constructed is homotopic to $\psi|_X$.

In proving Theorem~\ref{thm_Wold} we will follow the argument given in \cite{Kutzschebauch:2009}, making a number of modifications so that the proof holds with target space $\CC\times\CC^*$ for the embedding, rather than $\CC^2$. The proof will require a number of preliminary results and definitions, and we begin by reminding the reader what it means for a vector field to be \emph{$\RR$-complete}.

\begin{definition}
\label{def_completeVectorField}
Let $V$ be a smooth vector field on a smooth manifold $X$ with flow $\phi_t$. The \emph{maximal domain} of $V$ is the largest subset of $\RR\times X$ on which $\phi_t$ is defined. If the maximal domain of $V$ equals all of $\RR\times X$ then $V$ is said to be \emph{$\RR$-complete}.
\end{definition}

The following notion was first introduced by Varolin \cite{Varolin:2000,Varolin:2001} and further studied by T\' oth and Varolin \cite{Toth:2000,Toth:2006} and Kaliman and Kutzschebauch \cite{Kaliman:2008,Kaliman:2008a}. It generalises a property of $\CC^n$ of vital importance in the approximation of certain maps by automorphisms (see Theorem~\ref{thm_AndersenLempertHolConvex}).

\begin{definition}
\label{def_densityProperty}
Let $X$ be a complex manifold. We say $X$ has the \emph{density property} if the Lie algebra generated by the $\RR$-complete holomorphic vector fields on $X$ is dense in the Lie algebra of all holomorphic vector fields on $X$ in the compact-open topology.
\end{definition}

The results in this paper rely on the essential fact that $\CC\times\CC^*$ possesses the density property, as proved by Varolin \cite{Varolin:2001}.

\begin{definition}
\label{def_isotopy}
Let $X$ be a complex manifold and $\Omega \subset X$ be an open set. Let $k \in \NN \cup \{\infty\}$. A \emph{$\Cont^k$-isotopy of injective holomorphic maps from $\Omega$ to $X$} is a $\Cont^k$ map $\Phi : [0,1] \times \Omega \to X$ such that for each fixed $t \in [0,1]$, the map $\Phi(t,\cdot) : \Omega \to X$ is an injective holomorphic map. We often write $\Phi_t$ for $\Phi(t, \cdot)$ and write $\Phi_t : \Omega \to X$ for the isotopy.
\end{definition}

Recall that a compact set $K$ in a complex manifold $X$ is said to be $\mathscr{O}(X)$-convex if $K = \widehat K_{\OO(X)}$, where
\[
	\widehat K_{\OO(X)} = \{ x \in X : \lvert f(x) \rvert \le \sup\limits_{y\in K}\lvert f(y) \rvert \text{ for all } f \in \OO(X)\}
\]
is the holomorphically convex hull of $K$ in $X$. In particular, when $X = \CC^n$, $\widehat K_{\OO(\CC^n)}$ equals the polynomially convex hull of $K$.

The following result on the approximation of injective holomorphic maps by automorphisms was proved by Forstneri\v c and Rosay for $X = \CC^n, n>1$, in \cite{Forstneric:1993,Forstneric:1994}. The result follows from their generalisation (also in \cite{Forstneric:1993,Forstneric:1994}) of the Anders\'en-Lempert theorem, first proved in a different form by Anders\'en and Lempert \cite{Andersen:1992}. These two results from \cite{Forstneric:1993,Forstneric:1994}, together with stronger results by Forstneri\v c and L\o w \cite{Forstneric:1997}, and Forstneri\v c, L\o w and \O vrelid \cite{Forstneric:2001}, are collectively known as \emph{Anders\'en-Lempert theorems} for $\CC^n$. 

\begin{theorem}
\label{thm_AndersenLempertHolConvex}
Let $X$ be a Stein manifold with the density property. Let $\Omega \subset X$ be an open set and $\Phi_t : \Omega \to X$ a $\Cont^1$-isotopy of injective holomorphic maps such that $\Phi_0$ is the inclusion of $\Omega$ into $X$. Suppose $K \subset \Omega$ is a compact set such that $K_t = \Phi_t(K)$ is $\mathscr{O}(X)$-convex for every $t \in [0,1]$. Then $\Phi_1$ can be uniformly approximated on $K$ by holomorphic automorphisms of $X$ with respect to any Riemannian distance function on $X$.
\end{theorem}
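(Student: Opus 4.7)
The plan is to follow the strategy of Forstneri\v c and Rosay, adapted to the setting of a Stein manifold $X$ with the density property. The core of the argument is a time-slicing scheme combined with a local approximation lemma whose proof exploits the density property to approximate the flow on a holomorphically convex set by compositions of flows of $\RR$-complete fields.

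First, I would discretise the isotopy. By uniform continuity of $\Phi$ on $[0,1]\times K$ and compactness of $[0,1]$, choose a partition $0 = t_0 < t_1 < \cdots < t_N = 1$ fine enough that, for each $j$, the transition $\Psi_j = \Phi_{t_j}\circ\Phi_{t_{j-1}}^{-1}$ is defined, injective, and uniformly $\Cont^1$-close to the identity on some open neighborhood of the $\OO(X)$-convex compact set $K_{t_{j-1}}$. If one can then produce automorphisms $F_j \in \Aut(X)$ approximating $\Psi_j$ on $K_{t_{j-1}}$ (and on slightly enlarged sets so that the iterates remain under control), the telescoped composition $F_N\circ\cdots\circ F_1$ approximates $\Phi_1 = \Psi_N\circ\cdots\circ\Psi_1$ on $K$, provided the errors at each stage shrink geometrically to absorb propagation through compositions.

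Second, the crux is a local lemma: if $L \subset X$ is $\OO(X)$-convex and $\Psi$ is an injective holomorphic map from a neighborhood of $L$ into $X$, sufficiently $\Cont^1$-close to the inclusion, then $\Psi|_L$ can be uniformly approximated by holomorphic automorphisms of $X$. To prove it, connect $\Psi$ to the inclusion by an obvious isotopy $\Psi_s$ defined near $L$, differentiate to obtain a continuous family of holomorphic vector fields $V_s$ on a neighborhood of $L$, and use the Oka--Weil theorem (here the $\OO(X)$-convexity of $L$ is essential) to approximate each $V_s$ by a holomorphic vector field $\tilde V_s$ defined on all of $X$. The density property then allows $\tilde V_s$ to be approximated in the compact-open topology by an element of the Lie algebra generated by $\RR$-complete holomorphic vector fields on $X$.

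Third, one must pass from Lie combinations of $\RR$-complete fields to actual automorphisms. The flow of an $\RR$-complete vector field is by definition a one-parameter subgroup of $\Aut(X)$, and the standard Trotter and commutator formulas
\[
\phi_{W_1+W_2}^t = \lim_{n\to\infty}\bigl(\phi_{W_1}^{t/n}\circ\phi_{W_2}^{t/n}\bigr)^n, \qquad \phi_{[W_1,W_2]}^{t^2} = \lim_{n\to\infty}\bigl(\phi_{W_2}^{-t/n}\circ\phi_{W_1}^{-t/n}\circ\phi_{W_2}^{t/n}\circ\phi_{W_1}^{t/n}\bigr)^{n^2}
\]
let one approximate, on the compact set $L$ and for small time $s$, the time-$s$ flow of a finite Lie combination of $\RR$-complete fields by a composition of flows of the individual fields, each of which lies in $\Aut(X)$. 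Assembling the three ingredients gives the required $F_j$, and iterating closes the proof.

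I expect the main obstacle to be the quantitative bookkeeping: the Runge approximation of $V_s$ must be uniform in $s$, the commutator approximation converges only to leading order in $s$, and the successive compositions $F_{j-1}\circ\cdots\circ F_1$ carry $K$ off of $K_{t_{j-1}}$ so that one needs to approximate on slightly enlarged $\OO(X)$-convex compact sets at each stage. Preserving $\OO(X)$-convexity throughout is what makes the inductive step close up, and it is supplied precisely by the hypothesis that $K_t$ is $\OO(X)$-convex for \emph{every} $t\in[0,1]$ together with stability of convexity under small perturbation by automorphisms.
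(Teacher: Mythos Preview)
Your proposal is correct and follows essentially the same approach as the paper: time-discretisation of the isotopy, approximation of the generating vector fields by global fields via Runge-type approximation on the $\OO(X)$-convex sets $K_t$, passage to Lie combinations of complete fields via the density property, and finally the Trotter and commutator formulas to realise these as limits of compositions of automorphisms. The only organisational difference is that the paper first isolates and proves a Runge-domain version of the theorem (where all of $\Phi_t(\Omega)$ is assumed Runge in $X$) and then reduces the holomorphically convex case to it by showing, via plurisubharmonic exhaustion functions, that $K$ admits a basis of open neighbourhoods $U\subset\Omega$ with $\Phi_t(U)$ Runge for every $t$; you instead work directly with the $\OO(X)$-convex compacts at each stage, which is equally valid and perhaps slightly more streamlined.
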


As discussed in Section~\ref{sec_introduction}, the proof of Theorem~\ref{thm_AndersenLempertHolConvex} follows closely the argument given for $\CC^n$ in \cite{Forstneric:1993,Forstneric:1994}. To the best of my knowledge a proof has not appeared in the literature, so, for the benefit of the reader, a detailed argument is provided in an appendix.

The following lemma is easy in the case of an embedded real-analytic curve, yet becomes surprisingly subtle if the curve is only assumed to be smooth. With only minor modifications, the following argument gives the corresponding result for embedded smooth curves in $\CC^n, n > 1$.

\begin{lemma} 
\label{lem_convexNbhdBasis}
Let $\gamma : [0,1] \to \CC^2$ be a smooth embedding. Then the image of $\gamma$ has a neighbourhood basis of open sets each biholomorphic to a convex set in $\CC^2$.
\end{lemma}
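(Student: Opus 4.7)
The plan is a reduction to the real-analytic case. Given any open neighborhood $V \supset \gamma([0,1])$, I want to exhibit an open $W$ with $\gamma([0,1]) \subset W \subset V$ such that $W$ is biholomorphic to a convex open subset of $\CC^2$; since $V$ is arbitrary, this establishes the neighborhood basis property.

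First I would dispose of the real-analytic case. If $\tilde\gamma : [0,1] \to \CC^2$ is a real-analytic embedding, it extends to a holomorphic embedding $\tilde\gamma^\CC : U \to \CC^2$ on a simply connected open neighborhood $U$ of $[0,1]$ in $\CC$; after shrinking $U$, its image lies in $V$ and is a Riemann surface in $\CC^2$. The normal line bundle of this surface is holomorphically trivial, since $U$ is contractible and Stein, so I fix a nowhere-zero holomorphic section $\nu : U \to \CC^2$. For sufficiently small $r > 0$, the map $T(z,w) = \tilde\gamma^\CC(z) + w\, \nu(z)$ is a biholomorphism from $U \times \Delta_r$ onto a tubular neighborhood of $\tilde\gamma^\CC(U)$. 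By the Riemann mapping theorem, $U$ is biholomorphic to $\Delta_1$, so this tubular neighborhood is biholomorphic to the bidisc $\Delta_1 \times \Delta_r$, which is convex in $\CC^2$.

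To treat a smooth embedding $\gamma$, I would apply Whitney's real-analytic approximation theorem on $[0,1]$ to produce a real-analytic embedding $\tilde\gamma : [0,1] \to V$ that is $C^2$-close to $\gamma$. Applying the real-analytic construction above to $\tilde\gamma$ yields a biholomorphically convex tubular neighborhood $W$ of $\tilde\gamma([0,1])$. This $W$ contains $\gamma([0,1])$ as soon as the tubular radius $r$ exceeds the $C^0$-distance between $\gamma$ and $\tilde\gamma$, and $W \subset V$ once $r$ and the approximation error are both chosen small enough. Letting them tend to zero jointly then produces arbitrarily small neighborhoods $W$ of $\gamma([0,1])$ with the desired property.

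The main obstacle is that the maximal tubular radius on which $T$ stays globally injective depends on second-order data of $\tilde\gamma^\CC$, so a priori this radius could collapse as $\tilde\gamma$ varies. The key observation is that $C^2$-closeness of $\tilde\gamma$ to $\gamma$ provides uniform control of the second fundamental form of $\tilde\gamma^\CC(U)$ near $[0,1]$, yielding a positive lower bound for the admissible radius that is uniform along a sequence of approximations and so allows $r$ and the approximation error to be driven to zero simultaneously. The same argument carries over to embedded smooth curves in $\CC^n$ for $n > 1$, using a rank $(n-1)$ holomorphic normal disc bundle in place of the normal line bundle and producing neighborhoods biholomorphic to polydiscs.
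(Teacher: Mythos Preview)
Your reduction to the real-analytic case is natural, and you correctly identify the obstacle: the radius on which the tubular map $T$ is biholomorphic depends on the behaviour of $\tilde\gamma^\CC$ \emph{off} the real interval, not just on $[0,1]$. But your proposed resolution does not work. $C^2$-closeness of $\tilde\gamma$ to $\gamma$ is a condition on $[0,1]$ only; it says nothing about $\tilde\gamma^{\CC\prime}$ or $\tilde\gamma^{\CC\prime\prime}$ at points $z \in U \setminus [0,1]$, and it is precisely those values that govern where $dT$ stays invertible and where $T$ stays injective. In fact no uniform bound of the kind you want can exist when $\gamma$ is genuinely non-analytic: if $\tilde\gamma_n \to \gamma$ in $C^0([0,1])$ with each $\tilde\gamma_n$ holomorphic on a fixed strip $U_\delta$ and $\sup_n \lVert \tilde\gamma_n^\CC\rVert_{U_\delta} < \infty$, then by Montel a subsequence converges to a holomorphic extension of $\gamma$, contradicting non-analyticity. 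So the complexifications necessarily blow up on every fixed strip, and your ``positive lower bound for the admissible radius that is uniform along a sequence of approximations'' cannot be obtained from $C^2$ data on $[0,1]$ alone.

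What is actually needed is a quantitative trade-off: the width $\delta_n$ of the complex domain on which $T_n$ is well-behaved must stay large relative to the approximation error $\epsilon_n = \lVert \gamma - \tilde\gamma_n\rVert_{C^0}$, so that $\gamma([0,1])$ lands inside $W_n$ while $W_n$ stays inside $V$. The paper (following Rosay) achieves this not by approximating $\gamma$ but by fixing a single smooth almost-analytic extension $\tilde\gamma(z,w) = \gamma(z) + w\alpha(z)$, where $\bar\partial\gamma$ vanishes to infinite order along $\RR$, and then solving $\bar\partial u_j = \lambda_j\,\bar\partial\tilde\gamma$ on strips of width $1/j$. Because the right-hand side is flat along the real axis, one obtains $\lVert u_j\rVert_{C^k} = O(j^{-l})$ for every $l$, so $\tilde\gamma - u_j$ is holomorphic on the $1/j$-neighbourhood $\Omega_j$ and $C^1$-close to the \emph{fixed} diffeomorphism $\tilde\gamma$. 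That fixed reference is exactly what lets one conclude $\Gamma \subset (\tilde\gamma - u_j)(\Omega_j)$, the inclusion your argument cannot supply. A variant of your approach could perhaps be rescued by choosing the approximations very carefully (e.g.\ Gaussian mollification, which gives entire extensions with explicit growth $e^{\eta^2/\epsilon^2}$ on strips of width $\epsilon$ while the error on $[0,1]$ is $O(\epsilon^2)$), but that is substantial additional work and not what you wrote.
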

\begin{proof}
We outline an argument extracted from Rosay's proof of the main result in his paper \cite{Rosay:1993}, referring the reader there for complete details.

Begin by fixing a neighbourhood $V$ of $\gamma([0,1])$. We will construct a neighbourhood $V'\subset V$ of $\gamma([0,1])$ and a biholomorphism $\chi : W \to V'$, where $W$ is a convex set in $\CC^2$. Identify $[0,1]$ with $J = [0,1] \times \{0\} \subset \CC^2$ and extend $\gamma$ to a map $\tilde{\gamma}$ defined on $\CC^2$ that agrees with $\gamma$ on $J$ by means of the formula
\[
	\tilde{\gamma}(z,w) = \gamma(z) + w\alpha(z)\,.
\]
Here $\gamma$ is now a compactly supported almost-analytic extension of $\gamma$ to $\CC$ (meaning that $\bar\partial\gamma$ vanishes to infinite order along $\RR$, where $\bar\partial = \partial/\partial \bar z$) and $\alpha$ is a holomorphic map into $\CC^2$ such that for all $t \in [0,1]$ the vectors $\dot\gamma(t)$ and $\alpha(t)$ are linearly independent. Then $\tilde\gamma$ defines a diffeomorphism from a neighbourhood of $J$ into $\CC^2$, $\tilde\gamma$ is holomorphic in $w$ and $\bar\partial\tilde\gamma$ vanishes to infinite order along $\RR\times\CC$.

Let $\lambda$ be a smooth function on $\RR$ such that $\lambda(x) = 1$ for $x\le 2$ and $\lambda(x) = 0$ for $x \ge 3$. For $j \in \NN$ define $\lambda_j : \CC\to \RR$ by
\[
	\lambda_j(x+iy) = \lambda(jy)\lambda(-jy)\,,\quad x + iy \in \CC\,.
\]
Then $\lambda_j$ has support in the horizontal strip $|y| \le 3/j$, equals $1$ if $|y| \le 2/j$, and satisfies the following estimates on its derivatives: for each $k \in \NN$ there exists $C > 0$ such that
\[
\left\lvert \frac{\partial^k \lambda_j}{\partial y^k}\right\rvert \le C j^k\,.
\]

Rosay shows that we may find smooth maps $u_j : \CC^2 \to \CC^2$, $j \in \NN$, that solve the differential equation $\bar\partial u_j = \lambda_j(z)\bar\partial\tilde\gamma$, such that $u_j$ tends to $0$ in the $\Cont^\infty$ topology as $j \to \infty$. The convergence is sufficiently rapid that for every $k, l \in \NN$ and every compact set $H \subset \CC^2$, there exists $C > 0$ and $j_0 \in \NN$ such that for $j \ge j_0$,
\begin{equation}
\label{eqn_uGoToZero}
    \lVert u_j \rVert_{\Cont^k(H)} \le \frac{C}{j^{l}}\,.
\end{equation}

Fix a bounded neighbourhood $U$ of $J$ in $\CC^2$ sufficiently small so that $\tilde\gamma$ is a diffeomorphism of $U$ into $\CC^2$. For $j$ large enough, $\tilde\gamma - u_j$ will also be a diffeomorphism from $U$ into $\CC^2$. By taking $j$ sufficiently large we can therefore ensure $\tilde\gamma - u_j$ is a diffeomorphism on the convex set $\Omega_j\subset U$, where
\[
    \Omega_j = \{ (z,w) \in \CC^2 : \dist((z,w),J) < \frac{1}{j}\}
\]
is the open $1/j$-neighbourhood of $J$ in $\CC^2$. If necessary, we may take $j$ larger to ensure that $(\tilde\gamma - u_j)(\Omega_j) \subset V$. By the definition of $\Omega_j$ we have that $\lambda_j(z) = 1$ if $(z,w) \in \Omega_j$, so that $\tilde\gamma - u_j$ is also holomorphic on $\Omega_j$ and hence a biholomorphism onto its image. From (\ref{eqn_uGoToZero}) we have
\[
	\lVert (\tilde\gamma - u_j) - \tilde\gamma \rVert_{\Cont^1(\Omega_j)} \le \frac{C}{j^2}
\]
for some $C > 0$ and $j$ sufficiently large. We may therefore apply Lemma~II.2.1 from \cite{Rosay:1993} to conclude that for all sufficiently large $j$, $\Gamma \subset (\tilde\gamma - u_j)(\Omega_j)$.

Fix some sufficiently large $j$ so that all the conditions in the previous paragraph hold and let $\chi = \tilde\gamma - u_j$. Let $W = \Omega_j$ and $V' = \chi(\Omega_j)$. Then $\chi : W \to V'$ is a biholomorphism such that $\Gamma \subset V'\subset V$, and $W$ is convex.
\end{proof}

The following corollary will be required in the proof of Lemma~\ref{existauto}, where we construct an isotopy of injective holomorphic maps that send a finite number of compact curve segments outside a large cylinder.

\begin{corollary}
\label{cor_contractCurve}
Let $\gamma : [0,1] \to \CC^2$ be a smooth embedding and $V \subset \CC^2$ be an open neighbourhood of $\Gamma = \gamma([0,1])$. Fix a point $p \in \Gamma$. Then there exists an open neighbourhood $V' \subset V$ of $\Gamma$ such that for all $\epsilon > 0$ there exists a $\Cont^1$-isotopy of injective holomorphic maps $\Phi_t : V' \to V'$ satisfying:
\begin{enumerate}
\item[{\rm (1)}] $\Phi_0 = \id$.
\item[{\rm (2)}] $\Phi_t(p) = p$ for all $t \in [0,1]$.
\item[{\rm (3)}] $\Phi_1(V') \subset B(p,\epsilon)$,
\end{enumerate}
where $B(p,\epsilon)$ is the open ball of radius $\epsilon$ centred at $p \in \CC^2$.
\end{corollary}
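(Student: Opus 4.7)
The plan is to reduce to a convex model via Lemma~\ref{lem_convexNbhdBasis}, where contracting toward a point is achieved by a one-parameter family of affine homotheties. Concretely, first apply the lemma to the smooth embedding $\gamma$ to obtain an open neighbourhood $V' \subset V$ of $\Gamma$ together with a biholomorphism $\chi : W \to V'$, where $W$ is a bounded convex open set in $\CC^2$. Set $q = \chi^{-1}(p) \in W$; this will be the centre of contraction in the convex model. The choice of $V'$ (and hence of $\chi$, $W$, $q$) is made once and for all, independently of $\epsilon$.

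Next, for each parameter $\delta \in (0,1]$, define the affine contraction $\psi_t^\delta : W \to \CC^2$ by
\[
	\psi_t^\delta(z) = q + \bigl(1 - (1-\delta)t\bigr)(z - q), \qquad t \in [0,1].
\]
For every $t \in [0,1]$ the scaling factor lies in $[\delta, 1]$, so $\psi_t^\delta$ is an injective holomorphic affine map fixing $q$; since $W$ is convex and contains $q$, we have $\psi_t^\delta(W) \subset W$ for all such $t$, and in particular $\psi_1^\delta(W)$ is contained in the image of $W$ under the homothety of ratio $\delta$ about $q$, hence in the Euclidean ball about $q$ of radius $\delta \cdot \mathrm{diam}(W)$.

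Given $\epsilon > 0$, exploit the continuity of $\chi$ at $q$ together with the boundedness of $W$ to choose $\delta > 0$ so small that $\chi\bigl(\psi_1^\delta(W)\bigr) \subset B(p,\epsilon)$, and then define
\[
	\Phi_t = \chi \circ \psi_t^\delta \circ \chi^{-1} : V' \to V'.
\]
This is a $\Cont^\infty$-, hence $\Cont^1$-, isotopy of injective holomorphic self-maps of $V'$; it satisfies $\Phi_0 = \id$ because $\psi_0^\delta = \id_W$, $\Phi_t(p) = p$ because $\psi_t^\delta(q) = q$, and $\Phi_1(V') \subset B(p,\epsilon)$ by the choice of $\delta$.

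I do not expect any real obstacle here: Lemma~\ref{lem_convexNbhdBasis} has already converted the analytic content into a purely convex-geometric fact, and the remainder is just an affine contraction in $\CC^2$ transported through $\chi$. The only point to watch is the order of quantifiers, namely that $V'$ must be fixed before $\epsilon$ is specified; this is guaranteed by the fact that $V'$ is supplied by the neighbourhood basis in Lemma~\ref{lem_convexNbhdBasis}, while the $\epsilon$-dependence is absorbed into the single parameter $\delta$ of the homothety.
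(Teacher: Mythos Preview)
Your proof is correct and essentially identical to the paper's own argument: both invoke Lemma~\ref{lem_convexNbhdBasis} to obtain the biholomorphism $\chi : W \to V'$ with $W$ convex, then conjugate the affine homothety $\zeta \mapsto \chi^{-1}(p) + (1 - t(1-\delta))(\zeta - \chi^{-1}(p))$ by $\chi$, choosing $\delta$ small enough for condition~(3). Your remark on the order of quantifiers is apt and matches the paper's construction.
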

\begin{proof}
Using Lemma~\ref{lem_convexNbhdBasis} we obtain a open neighbourhood $V' \subset V$ of $\Gamma$ and a biholomorphism $\chi : W \to V'$, where $W \subset \CC^2$ is a relatively compact convex open set. For $0 < \delta < 1$ define an isotopy $\widetilde\Phi_t : W \to W$ by
\[
	\widetilde\Phi_t(\zeta) = \chi^{-1}(p) + (\zeta - \chi^{-1}(p))(1 - t(1-\delta))\,, \quad \zeta \in W\,.
\]
Then $\widetilde\Phi_0$ is the identity, and $\widetilde\Phi_t$ linearly contracts $W$ for $t > 0$, always keeping $\chi^{-1}(p) \in W$ fixed. Choose $\delta > 0$ sufficiently small so that $\widetilde\Phi_1(W) \subset \chi^{-1}(B(p, \epsilon))$.

Let $\Phi_t = \chi \circ \widetilde\Phi_t \circ \chi^{-1}$. Then $\Phi_t : V' \to V'$ is a $\Cont^1$-isotopy of injective holomorphic maps that satisfies conditions (1)--(3).
\end{proof}

The following well-known lemma establishes a correspondence between the notions of $\OO(M)$-convexity of a subset $K$ of a complex manifold $M$ and the $\OO(S)$-convexity of the image of $K$ under an embedding $\varphi : M \to S$, where $S$ is a Stein manifold.

\begin{lemma}
\label{convexity}
Let $\varphi:M \to S$ be an embedding of the complex manifold $M$ into the Stein manifold $S$, and let $K\subset M$. Then
\begin{equation*}
\varphi(\widehat{K}_{{\mathscr O}(M)}) = \widehat{\varphi(K)}_{{\mathscr O}(S)}\,.
\end{equation*}
\end{lemma}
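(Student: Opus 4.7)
The plan is to prove the two set inclusions separately. Throughout, I would use freely the fact that because $\varphi$ is a (proper, holomorphic) embedding, $\varphi(M)$ is a closed complex submanifold of $S$, and $\varphi$ is a biholomorphism onto $\varphi(M)$. The key machinery is Cartan's theorem B applied to $S$ and to the submanifold $\varphi(M)$: holomorphic functions on $\varphi(M)$ extend to $S$, and ideals of closed complex submanifolds of Stein manifolds have enough global sections to separate external points from the submanifold.

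For the inclusion $\varphi(\widehat{K}_{\OO(M)}) \subset \widehat{\varphi(K)}_{\OO(S)}$, I would take $p \in \widehat{K}_{\OO(M)}$ and an arbitrary $g \in \OO(S)$. Then $g \circ \varphi \in \OO(M)$, so the defining inequality for $\widehat{K}_{\OO(M)}$ gives
\[
    \lvert g(\varphi(p)) \rvert = \lvert (g\circ\varphi)(p)\rvert \le \sup_{x \in K}\lvert (g\circ\varphi)(x) \rvert = \sup_{y\in\varphi(K)}\lvert g(y)\rvert.
\]
Since $g$ was arbitrary, $\varphi(p) \in \widehat{\varphi(K)}_{\OO(S)}$. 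This direction requires nothing about $\varphi$ beyond holomorphy.

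For the reverse inclusion I would split into two steps. First, I would show $\widehat{\varphi(K)}_{\OO(S)} \subset \varphi(M)$. Given $y \notin \varphi(M)$, since $\varphi(M)$ is a closed complex submanifold of the Stein manifold $S$, Cartan's theorem B applied to the ideal sheaf of $\varphi(M)$ produces an $f \in \OO(S)$ with $f|_{\varphi(M)} \equiv 0$ and $f(y) = 1$. Then $\sup_{\varphi(K)} \lvert f\rvert = 0 < \lvert f(y)\rvert$, so $y \notin \widehat{\varphi(K)}_{\OO(S)}$. Second, if $y = \varphi(x)$ with $y \in \widehat{\varphi(K)}_{\OO(S)}$, I would show $x \in \widehat{K}_{\OO(M)}$. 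For any $f \in \OO(M)$, the function $f \circ \varphi^{-1}$ is holomorphic on the closed complex submanifold $\varphi(M)$, and by the surjectivity part of Cartan B it extends to some $\tilde f \in \OO(S)$. Then
\[
    \lvert f(x)\rvert = \lvert \tilde f(\varphi(x))\rvert = \lvert \tilde f(y)\rvert \le \sup_{\varphi(K)}\lvert \tilde f\rvert = \sup_{K}\lvert f\rvert,
\]
which places $x$ in $\widehat{K}_{\OO(M)}$.

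The substantive content sits entirely in the Cartan B applications, i.e. the extension and separation statements for closed complex submanifolds of Stein manifolds; once those are invoked, the rest is a straightforward chase through the definitions. The only place to exercise care is ensuring $\varphi(M)$ really is closed in $S$, which I would justify by citing that embeddings in this paper are proper, so $\varphi$ sends closed sets to closed sets and, in particular, $\varphi(M)$ itself is closed.
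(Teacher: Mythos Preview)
Your proof is correct and follows essentially the same route as the paper's: both prove the two inclusions separately, using restriction $g\mapsto g\circ\varphi$ for the easy direction and, for the reverse, the separation of points outside $\varphi(M)$ together with the extension of holomorphic functions from the closed submanifold $\varphi(M)$ to $S$. The only difference is cosmetic---you explicitly invoke Cartan's theorem~B and justify why $\varphi(M)$ is closed, whereas the paper leaves these steps implicit.
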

\begin{proof}
Let $p \in \widehat{K}_{{\mathscr O}(M)}$ and $f\in{\mathscr O}(S)$. Then $f\circ\varphi \in {\mathscr O}(M)$ so that $\lvert f(\varphi(p))\rvert = \lvert(f\circ\varphi)(p)\rvert \leq \lVert f\circ\varphi\rVert_K = \lVert f\rVert_{\varphi(K)}$, and therefore $\varphi(p) \in \widehat{\varphi(K)}_{{\mathscr O}(S)}$.

Conversely, let $q\in\widehat{\varphi(K)}_{{\mathscr O}(S)}$. If $q\notin\varphi(M)$ then there exists $f \in {\mathscr O}(S)$ such that $f|_{\varphi(M)}\equiv 0$ and $f(q) = 1$, so that $q\notin \widehat{\varphi(K)}_{{\mathscr O}(S)}$, a contradiction. Thus $q\in \varphi(M)$. Now suppose $q = \varphi(p)$, $p \in M$, and let $g \in {\mathscr O}(M)$. The holomorphic function $g\circ\varphi^{-1}$ defined on the closed submanifold $\varphi(M)\subset S$ then extends to an entire function $\tilde{g}\in{\mathscr O}(S)$. It follows that $\lvert g(p)\rvert = \lvert\tilde{g}(\varphi(p))\rvert = \lvert\tilde{g}(q)\rvert \leq \lVert \tilde{g} \rVert_{\varphi(K)} = \lVert g\rVert_K$ because $q\in\widehat{\varphi(K)}_{{\mathscr O}(S)}$. Hence $p \in \widehat{K}_{{\mathscr O}(M)}$ and $q = \varphi(p) \in \varphi(\widehat{K}_{{\mathscr O}(M)})$.
\end{proof}

Given a connected Stein manifold $M$, by the Bishop-Narasimhan-Remmert embedding theorem there exists an embedding $\varphi : M \to \CC^n$ for some $n \in \NN$. We may therefore apply the preceding result to show that existing lemmas involving polynomial convexity of certain sets (that is, $\OO(\CC^n)$-convexity) continue to hold true with respect to the $\OO(M)$-convexity of sets in a connected Stein manifold $M$. We present the first of two such results below, a generalisation of a theorem of Stolzenberg \cite{Stolzenberg:1966}.

\begin{lemma}
\label{Stolzenberg}
Let $\Gamma_1,\dots,\Gamma_m$ be compact, smooth, pairwise disjoint, embedded curves in a connected Stein manifold $M$. Let $K \subset M$ be an $\OO(M)$-convex compact set, disjoint from $\Gamma = \bigcup\limits_{j=1}^m \Gamma_j$. Then the set $K \cup \Gamma$ is $\OO(M)$-convex.
\end{lemma}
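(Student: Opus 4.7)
The plan is to reduce this statement to Stolzenberg's classical theorem in $\CC^n$, using the Bishop-Narasimhan-Remmert embedding theorem together with Lemma~\ref{convexity} as a translation tool. First I would pick a proper holomorphic embedding $\varphi : M \to \CC^n$ for some $n$, which exists because $M$ is a connected Stein manifold. Writing $L = K \cup \Gamma$, this is compact, and the injectivity of $\varphi$ means $L$ is $\OO(M)$-convex if and only if $\widehat{\varphi(L)}_{\OO(\CC^n)} = \varphi(L)$, by Lemma~\ref{convexity} applied to $L$. So the problem reduces to showing that $\varphi(K) \cup \varphi(\Gamma)$ is polynomially convex in $\CC^n$.

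Next I would check the hypotheses needed for the $\CC^n$ statement. Applying Lemma~\ref{convexity} to $K$ gives $\varphi(K) = \widehat{\varphi(K)}_{\OO(\CC^n)}$, so $\varphi(K)$ is polynomially convex. Because $\varphi$ is an embedding, the images $\varphi(\Gamma_1), \dots, \varphi(\Gamma_m)$ are pairwise disjoint, compact, smoothly embedded curves in $\CC^n$, and are disjoint from $\varphi(K)$.

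Finally I would invoke Stolzenberg's theorem (\cite{Stolzenberg:1966}, or the form stated in Stout's book on polynomial convexity): if $E \subset \CC^n$ is polynomially convex and compact, and $C_1,\dots,C_m$ are finitely many pairwise disjoint compact smooth curves in $\CC^n$ disjoint from $E$, then $E \cup C_1 \cup \dots \cup C_m$ is polynomially convex. Applied with $E = \varphi(K)$ and $C_j = \varphi(\Gamma_j)$, this gives the polynomial convexity of $\varphi(K \cup \Gamma)$, whence $K \cup \Gamma$ is $\OO(M)$-convex by the reduction above.

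The argument has no real obstacle, since the conceptual content is entirely carried by Stolzenberg's theorem in $\CC^n$; the only point worth attention is that Stolzenberg originally treats a single arc, so one should cite (or state) the multi-curve version, which follows either directly from the standard reference or by a straightforward induction on $m$ using the single-curve version (observing at each step that the compact set remains polynomially convex and stays disjoint from the next curve).
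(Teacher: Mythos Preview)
Your proposal is correct and follows essentially the same approach as the paper's proof: embed $M$ into $\CC^n$, transfer the hypotheses via Lemma~\ref{convexity}, apply Stolzenberg's theorem in $\CC^n$, and transfer the conclusion back. Your added remark about needing the multi-curve form of Stolzenberg's theorem is a fair point of care, though the paper simply cites \cite{Stolzenberg:1966} directly.
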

\begin{proof}
Let $\varphi : M \hookrightarrow \CC^n$ be an embedding. Then $\varphi(\Gamma_1),\dots,\varphi(\Gamma_m)$ is a collection of compact, smooth, pairwise disjoint, embedded curves in $\CC^n$. By Lemma~\ref{convexity}, $\widehat{\varphi(K)}_{\OO(\CC^n)} = \varphi(\widehat{K}_{\OO(M)}) = \varphi(K)$ so that $\varphi(K)$ is a polynomially convex compact set, and $\varphi(K) \cap \varphi(\Gamma) = \varnothing$. By Stolzenberg's theorem \cite{Stolzenberg:1966}, $\varphi(K) \cup \varphi(\Gamma)$ is polynomially convex and hence
\[
  \varphi(K \cup \Gamma)^{\widehat{\quad}}_{\OO(\CC^n)} = (\varphi(K) \cup \varphi(\Gamma))^{\widehat{\quad}}_{\OO(\CC^n)} = \varphi(K) \cup \varphi(\Gamma) = \varphi(K \cup \Gamma)\,.
\]
Again applying Lemma~\ref{convexity} we have
\[
	\widehat{K \cup \Gamma}_{\OO(M)} = \varphi^{-1}(\varphi(K \cup \Gamma)^{\widehat{\quad}}_{\OO(\CC^n)}) = \varphi^{-1}(\varphi(K \cup \Gamma)) = K \cup \Gamma
\]
as required.
\end{proof}

Using Theorem~\ref{thm_AndersenLempertHolConvex}, Corollary~\ref{cor_contractCurve} and Lemma~\ref{Stolzenberg} we obtain the following main technical result required in the proof of Theorem~\ref{thm_Wold}. Our proof follows that of Wold in \cite[Lemma~1]{Wold:2006a}, adapted to work in $\CC\times\CC^*$.

\begin{lemma}
\label{existauto}
Equip $\CC\times\CC^*$ with a Riemannian distance function $d$. Let $K \subset {\mathbb C}\times{\mathbb C}^*$ be an ${\mathscr O}({\mathbb C}\times{\mathbb C}^*)$-convex compact set and let $\gamma_1,\dots,\gamma_m$ be pairwise disjoint, smoothly embedded curves in ${\mathbb C}\times{\mathbb C}^*$ satisfying the nice projection property. Let $\Gamma_j$ be the image of $\gamma_j$ in $\CC\times\CC^*$, $j = 1,\dots,m$, and set $\Gamma = \bigcup\limits_{j=1}^m \Gamma_j$. Suppose that $\Gamma \cap K = \varnothing$. Then, given $r > 0$ and $\epsilon > 0$, there exists $\psi \in \Aut({\mathbb C}\times{\mathbb C}^*)$ such that the following conditions are satisfied:
\begin{itemize}
\item[(a)] $\sup\limits_{\zeta\in K}d(\psi(\zeta),\zeta) < \epsilon$.
\item[(b)] $\psi(\Gamma) \subset {\mathbb C}\times{\mathbb C}^* \setminus\overline P_r$.
\item[(c)] $\psi$ is homotopic to the identity map.
\end{itemize}
\end{lemma}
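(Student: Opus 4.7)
The plan is to follow Wold's argument in \cite{Wold:2006a}, adapted to $\CC\times\CC^*$ by replacing the additive shears $(z,w)\mapsto(z,w+f(z))$ used in $\CC^2$ by the multiplicative shears $(z,w)\mapsto(z,we^{H(z)})$, which are genuine automorphisms of $\CC\times\CC^*$. First, the automorphism-invariance of the nice projection property lets me conjugate $K$ and $\Gamma$ by the automorphism $\alpha$ of Definition~\ref{niceprojectionproperty}, reducing to the case $\alpha=\id$: if $\tilde\psi$ solves the conjugated problem then $\psi:=\alpha^{-1}\circ\tilde\psi\circ\alpha$ solves the original, with homotopy-to-identity preserved by conjugation (and the parameters $r,\epsilon$ adjusted using the behaviour of $\alpha^{\pm 1}$ on compact sets). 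By Sard's theorem I then choose a generic $M'>\max(r,M)$ with $\pi_1(K)\subset\Delta_{M'}$ so that $C:=\Gamma\cap\pi_1^{-1}(\overline\Delta_{M'})$ is a finite disjoint union of compact smooth arcs, while the tails $\Gamma\setminus C$ lie in $\pi_1^{-1}(\CC\setminus\overline\Delta_{M'})$ where $\pi_1$ is injective by NPP~(2)(b).

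\emph{Stage 1 (shear out the tails).} Write $\Gamma\setminus C=\{(z,g(z)):z\in E\}$, where $E=\pi_1(\Gamma)\setminus\overline\Delta_{M'}$ is a disjoint union of smooth arcs going to infinity and $g:E\to\CC^*$ is smooth; since each component of $E$ is simply connected, $\log g$ lifts to a continuous $\CC$-valued function on $E$. NPP~(2)(a) states that $\CC\setminus(\pi_1(\Gamma)\cup\overline\Delta_{M'})$ has no relatively compact components, which is the hypothesis needed for Arakelian's approximation theorem. Choosing a target constant $c\in\CC^*$ with $|c|\gg r+1$, this yields an entire $H:\CC\to\CC$ with $e^H$ close to $1$ on $\overline\Delta_{M'}$ and $g\,e^H$ close to $c$ on $E$. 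The multiplicative shear $\psi_1(z,w):=(z,we^{H(z)})$ is an automorphism of $\CC\times\CC^*$ homotopic to the identity via $s\mapsto(z,we^{sH(z)})$, approximates the identity on $K$, and maps $\Gamma\setminus C$ into $\pi_2^{-1}(\CC^*\setminus\overline A_r)\subset\CC\times\CC^*\setminus\overline P_r$.

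\emph{Stage 2 (contract the compact arcs by Anders\'en-Lempert).} Let $\tilde C:=\psi_1(C)$. For each arc $A$ of $\tilde C$ pick a target point $p_A$ near one of its endpoints on $\pi_1^{-1}(\partial\Delta_{M'})$ so that $|\pi_1(p_A)|>r$; Corollary~\ref{cor_contractCurve} supplies a neighborhood $V'_A$ of $A$ and a $\Cont^1$-isotopy of injective holomorphic maps contracting $V'_A$ into a small ball $B(p_A,\epsilon)\subset\CC\times\CC^*\setminus\overline P_r$. Fixing a large $N\gg r$ and a small $\delta>0$, let $T:=\bigl(\psi_1(\Gamma\setminus C)\cap\overline P_N\bigr)\setminus\bigcup_A B(p_A,\delta)$, which is a compact union of smooth arcs disjoint from $\tilde C$, from $\psi_1(K)$, and from each $V'_A$ after $V'_A$ is shrunk appropriately. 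Combining the contractions on disjoint neighborhoods of each $A$ with the identity on disjoint neighborhoods of $\psi_1(K)$ and of $T$ produces a single $\Cont^1$-isotopy $\Phi_t$ of injective holomorphic maps on an open neighborhood of the compact set $K_{\mathrm{AL}}:=\psi_1(K)\cup\tilde C\cup T$. By Lemma~\ref{Stolzenberg} (with $\psi_1(K)$ $\OO(\CC\times\CC^*)$-convex since $K$ is), $\Phi_t(K_{\mathrm{AL}})$ is $\OO(\CC\times\CC^*)$-convex for every $t\in[0,1]$, so Theorem~\ref{thm_AndersenLempertHolConvex} produces $\psi_2\in\Aut(\CC\times\CC^*)$ uniformly approximating $\Phi_1$ on $K_{\mathrm{AL}}$, and $\psi_2$ is homotopic to the identity as a composition of time-$1$ flows of $\RR$-complete vector fields. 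Setting $\psi:=\psi_2\circ\psi_1$, (a) follows from $\psi_1,\psi_2\approx\id$ on $K$, (b) from $\psi_2\approx\Phi_1$ with $\Phi_1(\tilde C)\subset\bigcup_A B(p_A,\epsilon)\subset\CC\times\CC^*\setminus\overline P_r$, and (c) from both $\psi_1,\psi_2$ being homotopic to the identity.

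The principal obstacle I anticipate is controlling $\psi_2$ on the part of $\psi_1(\Gamma)$ that lies outside $K_{\mathrm{AL}}$, namely the small pieces of tail inside the balls $B(p_A,\delta)$ and the outer tails beyond $\overline P_N$. These regions are where $\psi_1$ has already pushed the curves to $|\pi_2|\approx|c|\gg r+1$, well outside $\overline P_r$; by choosing the Anders\'en-Lempert approximation close enough on $K_{\mathrm{AL}}$ (in particular on the band $T$ adjoining these regions) and taking $N$ and $|c|$ large, continuity of $\psi_2$ on a slightly enlarged neighborhood of $T$ prevents these portions from re-entering $\overline P_r$. A secondary technical point is that the Arakelian approximant in Stage~1 must be \emph{nowhere vanishing} so that $\psi_1$ is a genuine automorphism of $\CC\times\CC^*$; this is resolved by approximating the continuous function $\log c-\log g$ on $E\cup\overline\Delta_{M'}$ by an entire $H$ and then taking $e^H$.
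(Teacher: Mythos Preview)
Your reduction to the case where the automorphism in Definition~\ref{niceprojectionproperty} is the identity, your handling of condition~(c) via the flow structure of the Anders\'en--Lempert approximant, and your replacement of additive shears by multiplicative shears $(z,w)\mapsto(z,we^{H(z)})$ are all correct and are exactly the adaptations needed for $\CC\times\CC^*$. The overall architecture of the argument, however, has a gap that you yourself flag but do not close.

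The problem is the unbounded portion of the tails, $\psi_1(\Gamma\setminus C)\setminus\overline P_N$. Theorem~\ref{thm_AndersenLempertHolConvex} controls $\psi_2$ only on the \emph{compact} set $K_{\mathrm{AL}}$; outside it $\psi_2$ is an unconstrained global automorphism of $\CC\times\CC^*$. No matter how large you choose $N$ or $|c|$, an infinite piece of $\psi_1(\Gamma)$ lies outside $K_{\mathrm{AL}}$, and an automorphism of $\CC\times\CC^*$ can certainly carry points near the slice $\{w=c\}$ back into $\overline P_r$. Your proposed remedy, ``continuity of $\psi_2$ on a slightly enlarged neighbourhood of $T$'', constrains $\psi_2$ only on a bounded set and says nothing about the infinite tail; taking $N$ larger just moves the uncontrolled region further out without eliminating it.

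The paper avoids this by reversing the order of \emph{construction} (though not of application). It first builds the Anders\'en--Lempert automorphism $\alpha$, moving the compact arcs $\widetilde\Gamma=\Gamma\cap(\overline\Delta_r\times\CC^*)$ outside $\overline P_r$, and only then constructs the shear $\beta(z,w)=(z,we^{g(z)})$, designed so that $\beta(\Gamma)$ avoids the now-known compact set $\alpha^{-1}(\overline P_r)$; the final map is $\psi=\alpha\circ\beta$. The point is that $\alpha^{-1}(\overline P_r)\subset\overline P_s$ for some finite $s$, and since the shear preserves $\pi_1$, every point of $\Gamma$ with $|\pi_1|>s$ is automatically sent by $\beta$ outside $\alpha^{-1}(\overline P_r)$, hence by $\alpha$ outside $\overline P_r$. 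Thus the shear---because of its explicit coordinate-preserving form---supplies the global control at infinity that an Anders\'en--Lempert approximant inherently cannot. (A side effect of this reordering is that only Mergelyan's theorem on a compact set is needed, rather than Arakelian's theorem on an unbounded one.)
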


\begin{proof}
We first show that we may assume that the automorphism $\alpha$ in Definition~\ref{niceprojectionproperty} has already been applied, so that the conditions in the nice projection property hold directly for each $\gamma_j$ and $\Gamma$. Indeed, given $K$, $\Gamma$, $r$ and $\epsilon$ as above, choose a slightly larger ${\mathscr O}({\mathbb C}\times{\mathbb C}^*)$-convex compact set $\widetilde{K}$ that contains $K$ in its interior and such that we still have $\widetilde{K} \cap \Gamma = \varnothing$ (take a small neighbourhood $\omega$ of $K$ disjoint from $\Gamma$ and take the closure of the component of an analytic polyhedron containing $K$ and contained within $\omega)$. Choose $r'$ sufficiently large so that $\alpha(P_r) \subset P_{r'}$. By the uniform continuity of $\alpha^{-1}$ on the compact set $\alpha(\widetilde{K})$ there is $\delta > 0$ such that for $x, y \in \alpha(\widetilde{K}), d(x,y) < \delta$ implies $d(\alpha^{-1}(x),\alpha^{-1}(y)) < \epsilon$. Set $\epsilon' = \min\{d(\alpha(K), {\mathbb C}\times{\mathbb C}^*\setminus \alpha(\widetilde{K})), \delta\}$. The lemma then gives $\widetilde{\psi}\in \Aut({\mathbb C}\times{\mathbb C}^*)$ such that $\sup\limits_{\zeta\in\alpha(\widetilde{K})}d(\widetilde{\psi}(\zeta),\zeta) < \epsilon'$ and $\widetilde{\psi}(\alpha(\Gamma))\subset {\mathbb C}\times{\mathbb C}^*\setminus \overline P_{r'}$. Letting $\psi = \alpha^{-1}\circ\widetilde{\psi}\circ\alpha$, we have
\begin{equation*}
\psi(\Gamma) = \alpha^{-1}(\widetilde{\psi}(\alpha(\Gamma))) \subset \alpha^{-1}({\mathbb C}\times{\mathbb C}^* \setminus \overline P_{r'}) = {\mathbb C}\times{\mathbb C}^*\setminus \alpha^{-1}(\overline P_{r'}) \subset {\mathbb C}\times{\mathbb C}^*\setminus \overline P_r\, .
\end{equation*}
Finally, let $\zeta\in K$, so that $\alpha(\zeta) \in \alpha(K)$. Then $d(\widetilde{\psi}(\alpha(\zeta)), \alpha(\zeta)) < \epsilon'$ so that $\widetilde{\psi}(\alpha(\zeta)) \in \alpha(\widetilde{K})$ and hence $d(\psi(\zeta), \zeta) < \epsilon$.

Next we show how to guarantee condition (c). Suppose that the projection $\pi_1(K)$ of $K$ onto $\CC$ is contained in a closed disk $D \subset \CC$. Pick $z_0 \in \CC \setminus (D \cup \pi_1(\Gamma))$ and let $S = \{(z_0,e^{2\pi it}):t \in [0,1]\}\subset \CC\times\CC^*$ be a loop about the missing line in $\CC\times\CC^*$. The set $K \cup S$ is then still $\OO(\CC\times\CC^*)$-convex and compact. Indeed, if $p \in \CC\times\CC^*$ and $\pi_1(p) \neq z_0$, take $f \in \OO(\CC\times\CC^*)$ such that $\lvert f(p)\rvert > 1$ and $\lVert f \rVert_K < 1$. As $D \cup \{z_0, \pi_1(p)\}$ is Runge in $\CC$, find $g \in \OO(\CC)$ that approximates 1 on $D \cup \{\pi_1(p)\}$ and 0 at $z_0$, so that $h(z,w) = f(z,w)g(z) \in \OO(\CC\times\CC^*)$ satisfies $\lvert h(p) \vert > 1$ and $\lVert h \rVert _{K\cup S} < 1$. On the other hand, if $\pi_1(p) = z_0$, then take $f \in \OO(\CC^*)$ such that $\lVert f \rVert _{\pi_2(S)} < 1$ and $\lvert f(\pi_2(p))\rvert > 1$, and take $g \in \OO(\CC)$ that approximates 0 on $D$ and 1 at $z_0$. Then $h(z,w) = f(w)g(z) \in \OO(\CC\times\CC^*)$ satisfies $\lvert h(p) \vert > 1$ and $\lVert h \rVert_{K \cup S} < 1$.

Now assume $\psi$ has been produced satisfying conditions (a) and (b), where $K$ has been replaced by $K \cup S$. There are only two possible homotopy classes of automorphisms of $\CC\times\CC^*$, determined by whether the orientation of a loop about the missing line is preserved or reversed. By requiring $\epsilon < 1$, a convex linear combination will interpolate between $\psi(S)$ and $S$ without passing through the missing line in $\CC\times\CC^*$, showing that $\psi$ is homotopic to $\id \in \Aut(\CC\times\CC^*)$.

We will now construct the desired automorphism $\psi$ in two stages, as the composition of two automorphisms of $\CC\times\CC^*$. For the first stage, as before, we take a slightly larger ${\mathscr O}(\CC\times\CC^*)$-convex compact set $K'$ that contains $K$ in its interior and such that we still have $K' \cap \Gamma = \varnothing$. Shrink $\epsilon$ if necessary to ensure that $\epsilon/2 < d(K, \CC\times\CC^* \setminus K')$. We may assume that $r \geq M$, where $M$ is determined by the nice projection property for $\gamma_1,\dots,\gamma_m$. We may also assume that $K' \subset \Delta_r \times {\mathbb C}^*$ and that $\gamma_j(0) \in \Delta_r \times \CC^*$ for $j = 1,\dots,m$. We set $\widetilde{\Gamma} = \Gamma \cap (\overline{\Delta}_r \times {\mathbb C}^*) = (\pi_1 |_\Gamma)^{-1}(\overline{\Delta}_r)$, which is compact due to the properness of $\pi_1|_\Gamma$. In fact, by the nice projection property, $\widetilde\Gamma$ will have precisely $m$ connected components $\widetilde\Gamma_1,\dots, \widetilde\Gamma_m$, where each $\widetilde\Gamma_j = \Gamma_j \cap (\overline{\Delta}_r \times {\mathbb C}^*)$ is a compact smoothly embedded curve. Choose an open neighbourhood $U_0$ of $K'$ and open neighbourhoods $U_1, \dots, U_m$ of each of the components of $\widetilde{\Gamma}$ so that the sets $U_0, \dots, U_m$ are all pairwise disjoint. For each $j= 1, \dots, m$, pick a point $p_j \in \widetilde\Gamma_j$. By Corollary~\ref{cor_contractCurve} we may assume that each $U_j$ can be contracted within itself by a $\Cont^1$-isotopy of injective holomorphic maps leaving $p_j$ fixed, so that the final image of $U_j$ lies within an arbitrarily small neighbourhood of $p_j$.

We now take the $\Cont^0$-isotopy of injective holomorphic maps defined on $\bigcup\limits_{j=0}^m U_j$ that is the inclusion on $U_0$ for $t \in [0,1]$, and that uses the isotopy on each $U_j$, $j=1,\dots,m$ to shrink $U_j$ for $t \in [0,1/2]$ and then translates each shrunken image of $U_j$ along the curve $\Gamma_j$ for $t \in [1/2, 1]$ so that at $t=1$ the images of each of the $U_j$ lie entirely outside of $\overline{\Delta}_r \times \CC^*$ (the isotopies of the $U_j$ can be chosen with sufficiently small final images to ensure that the translated shrunken images of the $U_j$ are always pairwise disjoint and do not meet $U_0$ during this process). The isotopy is $\Cont^1$ except at $t=1/2$, and can be made $\Cont^1$ everywhere by reparametrising the unit interval, giving a $\Cont^1$-isotopy of injective holomorphic maps on $\bigcup\limits_{j=0}^m U_j$ that leaves $K'$ fixed for all $t \in [0,1]$, and moves each component of $\widetilde{\Gamma}$ outside of $\overline{\Delta}_r \times \CC^*$ at $t = 1$. By Lemma~\ref{Stolzenberg} the image of $K'\cup\widetilde{\Gamma}$ under the isotopy is $\OO(\CC\times\CC^*)$-convex for each $t \in [0,1]$. We may therefore apply Theorem~\ref{thm_AndersenLempertHolConvex} to give $\alpha \in \Aut(\CC\times\CC^*)$ such that the following conditions hold:
\begin{itemize}
\item[(a)] $\sup\limits_{\zeta\in K'} d(\alpha(\zeta), \zeta) < \epsilon / 2$.
\item[(b)] $\alpha(\widetilde{\Gamma}) \subset \CC\times\CC^* \setminus \overline{P}_r$.
\end{itemize}

The automorphism $\alpha$ moves all of $\widetilde{\Gamma}$ outside of $\overline{P}_r$, but may also move points from $\Gamma \setminus \widetilde{\Gamma}$ into $\overline P_r$ that were not there previously. Let $\Gamma_r = \{ \zeta \in \Gamma : \alpha(\zeta) \in \overline{P}_r\} = \Gamma \cap \alpha^{-1}(\overline{P}_r)$. By construction, $\pi_1(\Gamma_r) \subset \pi_1(\Gamma)\setminus \overline{\Delta}_r$, and recall that $r$ was chosen so that $\CC\setminus (\pi_1(\Gamma) \cup \overline{\Delta}_r)$ has no bounded connected components and such that $\pi_1$ is injective on $\Gamma$ outside of $\Delta_r \times \CC^*$. We will now construct $\beta \in \Aut(\CC\times\CC^*)$ that is approximately the identity on $\overline \Delta_r \times \CC^*$, where $\Gamma$ already avoids $\alpha^{-1}(\overline{P}_r)$, and that moves the set $\Gamma \setminus \widetilde\Gamma$ so as to avoid $\alpha^{-1}(\overline{P}_r)$. We look for an automorphism of the form $\beta(z,w) = (z,w e^{g(z)})$, where $(z,w) \in \CC\times\CC^*$ and $g \in \mathscr{O}(\CC)$. In order to obtain $\beta$ we construct a continuous map $\tilde\beta(z,w) = (z, w f(z))$ from $(\overline\Delta_r \cup \pi_1(\Gamma))\times\CC^*$ to $\CC\times\CC^*$, where $f : \overline\Delta_r \cup \pi_1(\Gamma) \to \CC^*$ is continuous on $\overline\Delta_r \cup\pi_1(\Gamma)$ and holomorphic on $\Delta_r$, and then choose $g \in \OO(\CC)$ so that $e^g$ approximates $f$ uniformly on an appropriate set.

Note that $\alpha^{-1}(\overline{P}_r)$ is compact so that we may find $s > r$ such that $\alpha^{-1}(\overline{P}_r) \subset \overline{P}_s$. If $(z,w) \in \CC\times\CC^*$ such that $|z| > s$, then since $\pi_1(\beta(z,w))  = z$, we have $\beta(z,w) \notin \alpha^{-1}(\overline{P}_r)$. The same statement is true for $\tilde\beta$. By setting $f(z) = 1$ for $z \in \overline\Delta_r$, we have that $\tilde\beta = \id$ on $\overline\Delta_r \times\CC^*$. It therefore remains to define $f$ on $\pi_1(\Gamma)\setminus\overline\Delta_r$ to ensure that $\tilde\beta$ moves $\Gamma' = \Gamma \cap (\overline{P}_s \setminus (\Delta_r \times\CC^*))$ so as to avoid $\alpha^{-1}(\overline{P}_r)$.

As we are restricting our attention to the set $\Gamma'$ we may now assume that each $\gamma_j$ has domain $[0,\infty)$ by taking each $\gamma_j$ that has domain $(-\infty,\infty)$ and splitting it into two curves $\gamma_j(t + \delta)$ and $\gamma_j(-t -\delta)$, $t \in [0,\infty)$, where $\delta > 0$ is sufficiently small so that $\gamma_j(\delta),\gamma_j(-\delta) \in \overline{\Delta}_r\times\CC^*$. Let $k\ge m$ be the new total number of curves $\gamma_j$.

For each $\gamma_j(t) = (z_j(t),w_j(t))$, choose $t^j_0 > 0$ such that $\pi_1(\gamma_j(t_0^j)) = z_j(t^j_0) \in \partial \Delta_r$, $z_j(t) \in \overline{\Delta}_r$ for $t < t_0^j$ and $z_j(t) \in \CC\setminus\overline{\Delta}_r$ for $t > t_0^j$. This is possible because $\pi_1$ is injective on $\Gamma\setminus\pi_1^{-1}(\Delta_r)$ and $\CC\setminus(\pi_1(\Gamma)\cup\overline{\Delta}_r)$ has no relatively compact connected components. Note that $\gamma_j(t^j_0) \notin \alpha^{-1}(\overline{P}_r)$ since $\pi_1(\gamma_j(t^j_0)) = z_j(t^j_0) \in \partial\Delta_r \subset \overline{\Delta}_r$, hence $\gamma_j(t^j_0)\in\widetilde{\Gamma}$, and $\widetilde{\Gamma}\cap\alpha^{-1}(\overline{P}_r) = \varnothing$.

Let $B_j = \sup\{|w_j(t)| : t \ge t^j_0 \text{ such that } z_j(t) \in \overline{\Delta}_s\}$ and $b_j = \inf\{|w_j(t)| : t \ge t^j_0 \text{ such that } z_j(t) \in \overline{\Delta}_s\}$, $j = 1,\dots,k$. As $\Gamma'$ is compact, each $B_j$ is finite and each $b_j$ is positive. Let $B = \max\{B_j\}$ and $b = \min\{b_j\}$.

Set $L_j = \{z_j(t^j_0)\}\times\CC^*$ and $K_j = L_j \cap \alpha^{-1}(\overline{P}_r)$. Note that all the $L_j$, and hence also all the $K_j$, are distinct by injectivity of $\pi_1$ on $\Gamma \setminus \Delta_r\times\CC^*$.

Since $\alpha^{-1}(\overline{P}_r)$ is $\OO(\CC\times\CC^*)$-convex, each $L_j \setminus K_j$ has no relatively compact connected components. In particular, each $L_j \setminus K_j$ is either connected, or has exactly two connected components, one bounded and the other unbounded. For each $j$, the point $(z_j(t^j_0),w_j(t^j_0))$ lies in one of the connected components of $L_j \setminus K_j$. We may therefore find continuous paths $c_j : [0,1] \to L_j \setminus K_j$ such that $c_j(0) = \gamma_j(t^j_0) = (z_j(t^j_0), w_j(t^j_0))$ and such that $c_j(1) \in L_j \setminus (\{z_j(t^j_0)\} \times \overline{A}_T)$, where $T > 0$ is determined by $T + 1 = (s+1) \max\{B/|w_j(t^j_0)|, |w_j(t^j_0)|/b\}$ (recall the definition of the annulus $A_T$).

Now define $\tilde{c}_j : [0,1] \to \CC^*$ by $\tilde{c}_j(t) = \pi_2(c_j(t))/w_j(t^j_0)$, so that we have $c_j(t) = (z_j(t^j_0),w_j(t^j_0)\tilde{c}_j(t))$. Then $\tilde{c}_j(0) = 1$ and $w_j(t^j_0)\tilde{c}_j(1) \notin \overline{A}_T$. There is a neighbourhood $V_j$ of $c_j([0,1])$ in $\CC\times\CC^*$ that is contained in $\CC\times\CC^* \setminus \alpha^{-1}(\overline{P}_r)$. Then for sufficiently small $\delta > 0$, the curve $(z_j(t^j_0 + \delta t), w_j(t^j_0 + \delta t)\tilde{c}_j(t))$, $t \in [0,1]$, still lies in $V_j$ and we still have $w_j(t^j_0 + \delta) \tilde{c}_j(1) \notin \overline{A}_T$.

Define $f : \overline{\Delta}_r \cup \pi_1(\Gamma) \to \CC^*$ by
\begin{equation*}
f = \left\{
	\begin{array}{ll}
		1 & \text{on } \overline{\Delta}_r,\\
		\tilde{c}_j(t/\delta) & \text{at } z_j(t^j_0 + t) \text{ for } t \in [0,\delta], j = 1,\dots,k,\\
		\tilde{c}_j(1) & \text{at } z_j(t^j_0 + t) \text{ for } t > \delta, j = 1,\dots,k.
	\end{array} \right.
\end{equation*}
With this choice of $f$, as $t$ increases from $0$ to $\delta$, the point $\tilde\beta(\gamma_j(t^j_0 + t))$ avoids the set $\alpha^{-1}(\overline{P}_r)$ and at $t = \delta$ its second component lies outside of $\overline{A}_T$. However, we still need to ensure that following this, for $t \ge \delta$ such that $z_j(t^j_0 + t) \in \overline{\Delta}_s$, the second component $w_j(t^j_0 + t)\tilde{c}_j(1)$ of $\tilde\beta(\gamma_j(t^j_0 + t))$ does not re-enter into $\overline A_s$. This will in turn ensure that $\tilde\beta(\gamma_j(t^j_0 + t)) \notin \alpha^{-1}(\overline{P}_r) \subset \overline{P}_s$ for the same values of $t$.

The choice of $T$ made earlier ensures that if $t \ge t^j_0$ such that $z_j(t) \in \overline{\Delta}_s$, then either
\[
	\lvert w_j(t)\tilde{c}_j(1)\rvert \le B \lvert \tilde{c}_j(1)\rvert < B/(|w_j(t^j_0)|(T+1)) \le 1/(s+1)\,,
\]
if $\lvert w_j(t^j_0)\tilde{c}_j(1)\rvert < 1/(T+1)$, or
\[
	\lvert w_j(t)\tilde{c}_j(1)\rvert \ge b \lvert \tilde{c}_j(1)\rvert > b(T+1)/|w_j(t^j_0)| \ge s+1\,,
\]
if $\lvert w_j(t^j_0)\tilde{c}_j(1)\rvert > T+1$. That is, $w_j(t)\tilde{c}_j(1) \notin \overline{A}_s$ for all such $t$, as required.

The map $f$ is continuous on $\overline{\Delta}_r \cup \pi_1(\Gamma)$ and holomorphic on $\Delta_r$. Since $\overline{\Delta}_r\cup\pi_1(\Gamma)$ is simply connected, there is $\tilde{f} : \overline{\Delta}_r\cup\pi_1(\Gamma) \to \CC$ such that $e^{\tilde{f}} = f$, and $\tilde{f}$ is also continuous on $\overline{\Delta}_r\cup\pi_1(\Gamma)$ and holomorphic on $\Delta_r$. Given $\rho > 0$ we apply Mergelyan's theorem to approximate $\tilde{f}$ uniformly on the compact set $(\overline{\Delta}_r\cup\pi_1(\Gamma))\cap\overline{\Delta}_s$ by $g \in \OO(\CC)$ sufficiently accurately to ensure that, by uniform continuity of the exponential map on compact sets, we have $\lVert f - e^{g} \rVert_{(\overline{\Delta}_r\cup\pi_1(\Gamma))\cap\overline{\Delta}_s} < \rho$.

By choosing $\rho > 0$ sufficiently small we ensure that $\beta$ is sufficiently close to the identity on $\overline{\Delta}_r \times \CC^*$ that $\beta(\widetilde{\Gamma})\cap\alpha^{-1}(\overline{P}_r) = \varnothing$. We can also ensure that $d(\beta(\zeta),\zeta) < \epsilon/2$ for $\zeta \in K \subset \Delta_r \times\CC^*$. By the construction of $f$, after possibly shrinking $\rho$ we then have $\beta(\Gamma \cap (\overline{\Delta}_s \times \CC^*)) = \varnothing$. As discussed earlier, $\beta(z,w) \notin \alpha^{-1}(\overline{P}_r)$ for $|z| > s$, that is, for $(z,w) \in \Gamma \setminus \overline{\Delta}_s \times \CC^*$. Combining these results we see that $\beta(\Gamma) \cap \alpha^{-1}(\overline P_r) = \varnothing$.

The automorphism $\psi = \alpha \circ \beta$ will now satisfy conditions (a) and (b). Indeed, since $\beta(\Gamma) \cap \alpha^{-1}(\overline{P}_r) = \varnothing$, we have $\psi(\Gamma) \cap \overline{P}_r = \varnothing$, so that $\psi(\Gamma) \subset \CC\times\CC^* \setminus \overline{P}_r$. Let $\zeta \in K$, then $d(\beta(\zeta),\zeta) < \epsilon/2$, so that $\beta(\zeta) \in K'$ by our initial choice of $\epsilon$, and hence $d(\alpha(\beta(\zeta)),\beta(\zeta)) < \epsilon/2$. We have
\[
d(\psi(\zeta),\zeta) \le d(\alpha(\beta(\zeta)),\beta(\zeta)) + d(\beta(\zeta),\zeta) < \epsilon
\]
as required.
\end{proof}

The following lemma on exhaustions of embedded bordered Riemann surfaces in a connected Stein manifold $M$ is a generalisation of a similar result for embeddings of Riemann surfaces in $\CC^n$ as proved by Wold \cite{Wold:2006}. The proof follows directly from Wold's result, using the correspondence established in Lemma~\ref{convexity}.

\begin{lemma}
\label{compactexhaustion}
Let $M$ be a connected Stein manifold and $X \subset M$ be the interior of an embedded bordered Riemann surface $\overline X$ with non-compact boundary components $\partial_1,\dots,\partial_m$. Then there is an exhaustion $X_j$ of $X$ by $\OO(M)$-convex compact sets such that if $K \subset M \setminus \partial \overline X$ is an $\OO(M)$-convex compact set and $K \cap X \subset X_j$ for some $j$, then $K \cup X_j$ is $\OO(M)$-convex.
\end{lemma}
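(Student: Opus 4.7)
The plan is to reduce to Wold's corresponding exhaustion result for embedded Riemann surfaces in $\CC^n$, and then transport the conclusion back to $M$ using the convexity correspondence of Lemma \ref{convexity}.

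First I would invoke the Bishop-Narasimhan-Remmert theorem to fix a proper holomorphic embedding $\varphi : M \hookrightarrow \CC^N$ for some $N$. Composing with the given embedding of $\overline X$ into $M$ yields a proper embedding of $\overline X$ into $\CC^N$ whose boundary components $\varphi(\partial_1),\dots,\varphi(\partial_m)$ are non-compact. Applying Wold's result \cite{Wold:2006} to $\varphi(\overline X) \subset \CC^N$ produces an exhaustion $\{Y_j\}$ of $\varphi(X)$ by polynomially convex compact sets with the property that whenever $L \subset \CC^N \setminus \varphi(\partial\overline X)$ is a polynomially convex compact set satisfying $L \cap \varphi(X) \subset Y_j$, the union $L \cup Y_j$ is polynomially convex.

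Setting $X_j = \varphi^{-1}(Y_j)$ gives an exhaustion of $X$ by compact sets. Each $X_j$ is $\OO(M)$-convex: Lemma \ref{convexity} gives $\varphi(\widehat{X_j}_{\OO(M)}) = \widehat{\varphi(X_j)}_{\OO(\CC^N)} = Y_j = \varphi(X_j)$, and injectivity of $\varphi$ yields $\widehat{X_j}_{\OO(M)} = X_j$. For the gluing property, suppose $K \subset M \setminus \partial\overline X$ is $\OO(M)$-convex with $K \cap X \subset X_j$. Then Lemma \ref{convexity} makes $\varphi(K)$ polynomially convex in $\CC^N$, while injectivity of $\varphi$ gives $\varphi(K) \subset \CC^N \setminus \varphi(\partial\overline X)$ and $\varphi(K) \cap \varphi(X) = \varphi(K \cap X) \subset Y_j$. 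Wold's conclusion then forces $\varphi(K) \cup Y_j = \varphi(K \cup X_j)$ to be polynomially convex, and a final application of Lemma \ref{convexity} transports this to the $\OO(M)$-convexity of $K \cup X_j$.

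The argument is essentially bookkeeping once Wold's theorem is in hand; the only point requiring care is ensuring the hypotheses of Wold's theorem translate cleanly across $\varphi$. This amounts to noting that $\varphi$ is injective, that $\varphi(M)$ is closed in $\CC^N$ (so Lemma \ref{convexity} is applicable, as in its original proof), and that the exhaustion property of the $Y_j$ pulls back under the homeomorphism $\varphi^{-1}|_{\varphi(X)}$. All of these are immediate from $\varphi$ being a proper embedding, so no further analysis is needed beyond invoking Wold's result.
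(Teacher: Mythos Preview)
Your proposal is correct and follows essentially the same approach as the paper's proof: embed $M$ into $\CC^N$, invoke Wold's exhaustion result (Proposition~3.1 of \cite{Wold:2006}) for $\varphi(\overline X)$, and transport the conclusions back via Lemma~\ref{convexity}. Your write-up in fact spells out a couple of verifications (such as $\varphi(K)\cap\varphi(X)=\varphi(K\cap X)$ by injectivity) that the paper leaves implicit, but the argument is the same.
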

\begin{proof}
Let $\varphi : M \hookrightarrow \CC^n$ be an embedding. Then $\varphi(\overline X) \subset \CC^n$ is a bordered Riemann surface with unbounded boundary components $\varphi(\partial_1),\dots,\varphi(\partial_m)$. By the proof of Proposition 3.1 in \cite{Wold:2006} there exists an exhaustion $\widetilde{X}_j$ of $\varphi(X)$ by polynomially convex compact sets such that if $\widetilde{K} \subset \CC^n \setminus \varphi(\partial \overline X)$ is a polynomially convex compact set and $\widetilde{K} \cap \varphi(X) \subset \widetilde{X}_j$ for some $j$, then $\widetilde{K} \cup \widetilde{X}_j$ is polynomially convex. By Lemma~\ref{convexity}, the sets $X_j = \varphi^{-1}(\widetilde{X}_j)$ give an exhaustion of $X$ by $\OO(M)$-convex compact sets.

Now suppose we are given an $\OO(M)$-convex compact set $K \subset M \setminus \partial \overline X$ such that $K \cap X \subset X_j$ for some $j$. Then $\varphi(K) \subset \CC^n \setminus \varphi(\partial \overline X)$ is a polynomially convex compact set by Lemma~\ref{convexity} and $\varphi(K) \cap \varphi(X) \subset \widetilde{X}_j$. Thus $\varphi(K) \cup \widetilde{X}_j$ is polynomially convex and hence again by Lemma~\ref{convexity}, $K \cup X_j$ is $\OO(M)$-convex.
\end{proof}

The following proposition will allow us to construct a Fatou-Bieberbach domain in $\CC\times\CC^*$ with the properties detailed in the statement of Theorem~\ref{thm_Wold}. The proposition is a generalisation of a result due to Forstneri\v c \cite[Prop. 5.1]{Forstneric:1999} on the construction of Fatou-Bieberbach domains in $\CC^n$. The proof given there immediately yields the result in the more general situation stated below.

\begin{proposition}
\label{FatouBieberbach}
Let $S$ be a Stein manifold equipped with a distance function $d$ induced by a complete Riemannian metric on $S$. Let $D \subset S$ be an open connected set exhausted by a sequence of compact sets $K_0 \subset K_1 \subset \dots \subset \bigcup\limits_{j=0}^\infty K_j= D$ such that $K_{j-1} \subset K_j^\circ$ for each $j \in \NN$. Choose numbers $\epsilon_j$, $j\in\NN$, such that
\[
	0 < \epsilon_j < d(K_{j-1}, S \setminus K_j) \text{ for all } j \in \NN,
\]
and
\[
	\sum_{j=1}^\infty \epsilon_j < \infty\,.
\]

Suppose that $\Psi_j$, $j \in \NN$, are holomorphic automorphisms of $S$ satisfying
\[
d(\Psi_j(z),z) < \epsilon_j \text{ for all } z\in K_j \text{ and for all } j\in\NN\,.
\]
Set $\phi_m = \Psi_m\circ\Psi_{m-1}\circ\dots\circ\Psi_1$. Then there is an open set $\Omega \subset S$ such that the sequence $(\phi_m)$ converges locally uniformly on $\Omega$ to a biholomorphism $\phi : \Omega \to D$. In fact, we have $\Omega = \bigcup\limits_{m=1}^\infty \phi^{-1}_m(K_m)$.
\end{proposition}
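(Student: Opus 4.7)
This is the classical Fatou-Bieberbach construction adapted from $\CC^n$ to a Stein manifold $S$ with complete Riemannian distance $d$. The plan is to mimic Forstneri\v c's original proof, using completeness of $d$ as the only non-Euclidean ingredient. I would proceed in three steps.

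\textbf{Step 1 (openness of $\Omega$).} The condition $\epsilon_j < d(K_{j-1}, S \setminus K_j)$ says that the open $\epsilon_j$-neighbourhood of $K_{j-1}$ is contained in $K_j^\circ$. Hence if $z \in \phi_m^{-1}(K_m)$, then $\phi_{m+1}(z) = \Psi_{m+1}(\phi_m(z))$ lies in the open $\epsilon_{m+1}$-ball about $\phi_m(z) \in K_m$ and so in $K_{m+1}^\circ$. By induction, $\phi_k(z) \in K_k$ for every $k \ge m$, and $\phi_m^{-1}(K_m) \subset \phi_{m+1}^{-1}(K_{m+1}^\circ)$. Consequently
\[
  \Omega = \bigcup_{m=1}^\infty \phi_m^{-1}(K_m) = \bigcup_{m=1}^\infty \phi_m^{-1}(K_m^\circ)
\]
is open.

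\textbf{Step 2 (locally uniform convergence).} For $z \in \phi_{m_0}^{-1}(K_{m_0})$, Step 1 gives $\phi_m(z) \in K_m$ for all $m \ge m_0$, hence $d(\phi_{m+1}(z),\phi_m(z)) < \epsilon_{m+1}$. Since $\sum \epsilon_j < \infty$, the sequence $(\phi_m)$ is uniformly Cauchy on $\phi_{m_0}^{-1}(K_{m_0})$, and completeness of $d$ yields a limit map $\phi \colon \Omega \to S$, which is holomorphic by the Weierstrass convergence theorem.

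\textbf{Step 3 (biholomorphism onto $D$).} For surjectivity, take $w \in D$ and choose $j$ with $w \in K_j$. For $m \ge j$, set $z_m = \phi_m^{-1}(w) \in S$, which makes sense because $\phi_m \in \Aut(S)$. The recursion $z_{m+1} = \phi_m^{-1}(\Psi_{m+1}^{-1}(w))$, combined with the near-identity estimate $d(\Psi_{m+1}^{-1}(w), w) < \epsilon_{m+1}$ (valid once $\Psi_{m+1}^{-1}(w) \in K_{m+1}$ is confirmed), gives by induction that $z_m \in \phi_m^{-1}(K_m) \subset \Omega$ and that $(z_m)$ is Cauchy in the complete space $(S,d)$, with limit $\psi(w) \in \Omega$ satisfying $\phi(\psi(w)) = w$. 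For injectivity, each $\phi_m|_\Omega$ is an injective holomorphic map with nowhere-vanishing Jacobian, so by Hurwitz's theorem the limit $\phi$ is either injective or has identically degenerate Jacobian; the latter is excluded by the surjectivity just shown, since $\phi(\Omega) = D$ is open in $S$. Hence $\phi \colon \Omega \to D$ is a biholomorphism.

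The main obstacle is Step 3, specifically the bootstrapping that keeps the backward orbit $z_m = \phi_m^{-1}(w)$ inside $K_m$ so that the near-identity control of $\Psi_{m+1}$ transfers to $\Psi_{m+1}^{-1}$ and yields the bound on $d(z_{m+1},z_m)$. This is a standard but slightly delicate argument in the Fatou-Bieberbach literature, and the passage from $\CC^n$ to a general Stein manifold is purely notational once completeness of $d$ is assumed.
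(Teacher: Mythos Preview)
The paper does not prove this proposition; it simply asserts that Forstneri\v c's argument for $\CC^n$ \cite[Prop.~5.1]{Forstneric:1999} carries over. Your Steps~1 and~2 are correct and reproduce the standard opening of that argument.

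Step~3, however, has a real gap that your closing paragraph does not correctly identify. From the recursion $z_{m+1}=\phi_m^{-1}(\Psi_{m+1}^{-1}(w))$ and $z_m=\phi_m^{-1}(w)$ you want to conclude that $(z_m)$ is Cauchy once $d(\Psi_{m+1}^{-1}(w),w)<\epsilon_{m+1}$. But
\[
d(z_{m+1},z_m)=d\bigl(\phi_m^{-1}(\Psi_{m+1}^{-1}(w)),\,\phi_m^{-1}(w)\bigr),
\]
and nothing in the hypotheses bounds the Lipschitz constant of $\phi_m^{-1}$; it is a composition of $m$ automorphisms whose distortion may grow without bound as $m\to\infty$, so summability of $\epsilon_{m+1}$ does not give summability of $d(z_{m+1},z_m)$. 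There is also a smaller gap: the parenthetical claim $\Psi_{m+1}^{-1}(w)\in K_{m+1}$ amounts to $K_m\subset\Psi_{m+1}(K_{m+1})$, which is not a formal consequence of the near-identity bound on $\Psi_{m+1}|_{K_{m+1}}$ and needs a separate topological argument. Your last paragraph frames the difficulty as ``keeping $z_m$ inside $K_m$'', but $z_m$ lies in $\phi_m^{-1}(K_m)$, not in $K_m$; the genuine obstacle is the uncontrolled expansion of $\phi_m^{-1}$.

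The usual way around this avoids any Cauchy estimate on the $z_m$. One first invokes Hurwitz to see that $\phi$ is either injective or has identically degenerate Jacobian. For surjectivity one argues by Rouch\'e: given $w\in K_j^\circ$, choose $r>0$ with $B(w,r)\subset K_j$ and then $m\ge j$ so large that $\sum_{k>m}\epsilon_k<r$; on $\phi_m^{-1}(B(w,r))\subset\Omega$ one has $d(\phi,\phi_m)<r$, and since $\phi_m$ hits $w$ exactly once there, so does $\phi$. This gives $D\subset\phi(\Omega)$, rules out the degenerate alternative in Hurwitz, and combined with $\phi(\Omega)\subset\overline D$ and openness of $\phi(\Omega)$ yields $\phi(\Omega)=D$.
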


We may now prove Theorem~\ref{thm_Wold}.

\begin{proof}[Proof of Theorem~\ref{thm_Wold}]
Let $\CC\times\CC^*$ be equipped with a distance function $d$ induced by a complete Riemannian metric (for example, pull back the Euclidean distance on $\CC^3$ via the embedding $\CC\times\CC^* \to \CC^3$ given by $(z,w)\mapsto (z,w,1/w)$). Consider the exhaustion $\varnothing = \overline{P}_0 \subset \overline{P}_1 \subset \overline{P}_2 \subset \dots$ of $\CC\times\CC^*$ by $\OO(\CC\times\CC^*)$-convex compact cylinders with integer radii. Choose a sequence $\epsilon_j > 0$, $j \in \NN$, such that ${\epsilon}_j < d(\overline{P}_{j-1},\CC\times\CC^* \setminus \overline{P}_j)$ and such that $\sum\limits_{j = 1}^\infty \epsilon_j < \infty$.

In the following we identify $\overline{X}$ with its image $\psi(\overline{X})$ in $\CC\times\CC^*$. Let $X_1 \subset X_2 \subset \dots$ be a compact exhaustion of $X$. By scaling $\overline{X}$ we may assume that $\overline{P}_2 \cap \partial\overline X = \varnothing$, due to the fact that $\partial\overline X$ satisfies the nice projection property. We begin by setting $\Psi_1 = \id \in \Aut(\CC\times\CC^*)$ so that the following conditions obviously hold:
\begin{itemize}
\item $d(\Psi_1(\zeta), \zeta) < \epsilon_1$ for all $\zeta \in \overline P_1$,
\item $d(\Psi_1(\zeta), \zeta) < \epsilon_1$ for all $\zeta \in X_1$, and
\item $\Psi_1(\partial\overline X) \subset \CC\times\CC^* \setminus \overline{P}_2$.
\end{itemize}
This completes the initial step of the induction.

We work through the second step of the induction explicitly. Consider the bordered Riemann surface $\Psi_1(\overline{X})$. Its boundary $\Psi_1( \partial\overline X)$ still satisfies the nice projection property since $\Psi_1 \in \Aut(\CC\times\CC^*)$. By Lemma~\ref{compactexhaustion} there exists an $\OO(\CC\times\CC^*)$-convex compact set $L \subset \Psi_1(X)$ such that $\overline{P}_2 \cap \Psi_1(X) \subset L$, $\Psi_1(X_2) \subset L$, and $\overline{P}_2 \cup L$ is $\OO(\CC\times\CC^*)$-convex. By Lemma~\ref{existauto} there exists $\Psi_2 \in \Aut(\CC\times\CC^*)$ such that $d(\Psi_2(\zeta),\zeta) < \epsilon_2$ for all $\zeta \in \overline P_2\cup L$ and $\Psi_2(\Psi_1( \partial\overline X)) \subset \CC\times\CC^* \setminus \overline{P}_3$. Since $\Psi_1(X_2) \subset L$, we have $d(\Psi_2(\zeta),\zeta) < \epsilon_2$ for all $\zeta\in \Psi_1(X_2)$ and thus the following conditions hold:
\begin{itemize}
\item $d(\Psi_2(\zeta),\zeta) < \epsilon_2$ for all $\zeta \in \overline P_2$,
\item $d(\Psi_2\circ\Psi_1(\zeta),\Psi_1(\zeta)) < \epsilon_2$ for all $\zeta \in X_2$, and
\item $\Psi_2 \circ \Psi_1( \partial\overline X)\subset \CC\times\CC^* \setminus \overline{P}_3$.
\end{itemize}

Continuing this process, considering at the $j$-th step the bordered Riemann surface $\Psi_{j-1}\circ\cdots\circ\Psi_1(\overline X)$, we obtain a sequence $(\Psi_j)$ of automorphisms of $\CC\times\CC^*$ such that for all $j \in \NN$:
\begin{enumerate}
\item \label{id_on_P} $d(\Psi_j(\zeta),\zeta) < \epsilon_{j}$ for all $\zeta \in \overline P_j$,
\item \label{Cauchy_on_X} $d(\Psi_j \circ \cdots \circ \Psi_1(\zeta), \Psi_{j-1}\circ\cdots\circ\Psi_1(\zeta)) < \epsilon_{j}$ for all $\zeta \in X_j$, and
\item \label{send_Gamma_away} $\Psi_j \circ \cdots \circ \Psi_1( \partial\overline X)\subset\CC\times\CC^*\setminus\overline{P}_{j+1}$.
\end{enumerate}

Setting $\phi_m = \Psi_m \circ \cdots \circ \Psi_1$, we apply Proposition~\ref{FatouBieberbach} to obtain an open set $\Omega= \bigcup\limits_{m=1}^\infty \phi_m^{-1}(\overline{P}_m) \subset \CC\times\CC^*$ such that $\phi = \lim\limits_{m\to\infty}\phi_m$ exists and converges uniformly on compact subsets of $\Omega$, and $\phi : \Omega \to \CC\times\CC^*$ is a biholomorphism. In fact, $\Omega$ equals the set of all points $\zeta \in \CC\times\CC^*$ such that the sequence $(\phi_j(\zeta))$ lies within a compact subset of $\CC\times\CC^*$. By (\ref{Cauchy_on_X}) we see that $X \subset \Omega$ and by (\ref{send_Gamma_away}) that $\partial \overline X \cap \Omega = \varnothing$, so that $\partial \overline X \subset \partial \Omega$. Thus $\phi$ gives a proper embedding of $X$ into $\CC\times\CC^*$.

Finally, if $K$ is a given compact subset of $X$, then it is clear that we may take $X_1$ sufficiently large so that $K \subset X_1$, while also taking $\epsilon_j$, $j \in \NN$, sufficiently small in order to ensure that $\phi|_K$ uniformly approximates the inclusion $K \hookrightarrow \CC\times\CC^*$ as closely as desired.
\end{proof}

\section{A strong Oka principle for circular domains}
\label{sec_strongOka}
\noindent As discussed in Section~\ref{sec_introduction}, a simple version of Gromov's Oka principle states that every continuous map from a Stein manifold into an elliptic manifold is homotopic to a holomorphic map. In this section we show that by restricting the class of domains of our maps to certain planar Riemann surfaces called \emph{circular domains}, and fixing the target as $\CC\times\CC^*$, a much stronger Oka property holds (Theorem~\ref{thm_strongOka}). We begin with the following definition.

\begin{definition}
\label{def_circularDomain}
A \emph{circular domain} is a domain $X \subset \CC$ consisting of the open unit disc $\Delta_1$ from which $n \ge 0$ closed, pairwise disjoint discs have been removed. Assuming the deleted discs have centres $a_j \in \Delta_1$ and radii $r_j > 0$, $j = 1,\dots,n$, we have
\[
	X = \Delta_1 \setminus \bigcup_{j = 1}^n (a_j + \overline \Delta_{r_j})
\]
with constraints $r_j + r_k < \lvert a_j - a_k \rvert$ for $j \neq k$, and $r_j < 1 - \lvert a_j \rvert$ for all $j = 1,\dots,n$. Note that our circular domains are not permitted to have punctures.
\end{definition}

By the Koebe uniformisation theorem \cite[Ch.\ V, \textsection 6, Thm. 2]{Goluzin:1969}, every finitely connected open subset of the Riemann sphere is biholomorphic to the Riemann sphere with a finite number of points and pairwise disjoint closed discs removed. Thus any finitely connected, bounded planar domain without isolated boundary points is biholomorphic to a circular domain. Note that this is the same class of Riemann surfaces as considered by Globevnik and Stens\o nes in \cite{Globevnik:1995}.

We now state the main result of the paper, a strong Oka property for circular domains.

\begin{theorem}
\label{thm_strongOka}
Let $X$ be a circular domain. Then every continuous map $X \to \CC\times\CC^*$ is homotopic to a proper holomorphic embedding $X \to \CC\times\CC^*$.
\end{theorem}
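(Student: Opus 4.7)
The plan is to reduce Theorem~\ref{thm_strongOka} to Theorem~\ref{thm_embedRiemannSurface} by homotopy-classifying continuous maps into $\CC\times\CC^*$ and then exhibiting an explicit holomorphic embedding of the compact bordered surface $\overline{X}$ in each homotopy class. The space $\CC\times\CC^*$ is homotopy equivalent to $S^1$, a $K(\ZZ,1)$, so for a circular domain $X$ with $n$ removed discs,
\[
    [X, \CC\times\CC^*] \cong H^1(X;\ZZ) \cong \ZZ^n,
\]
and the homotopy class of a continuous map $f : X \to \CC\times\CC^*$ is completely determined by the tuple $(n_1,\ldots,n_n)$ of winding numbers of $\pi_2 \circ f$ about the $n$ inner boundary circles of $X$.

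Given such a tuple, I would realise it by the explicit embedding of $\overline{X}$ defined by
\[
    \psi(z) = \Bigl(z,\ \prod_{j=1}^n (z - a_j)^{n_j}\Bigr),
\]
where $a_j$ is the centre of the $j$-th removed closed disc. Since each $a_j$ lies strictly inside that removed disc and hence at positive distance from $\overline{X}$, the second coordinate is a nonvanishing holomorphic function on a neighbourhood of $\overline{X}$; and $\psi$ is trivially injective and an immersion because $z$ is the first coordinate. Hence $\psi$ is a holomorphic embedding of the compact bordered Riemann surface $\overline{X}$ into $\CC\times\CC^*$, and the standard winding-number calculation (the argument of $\prod_j(z-a_j)^{n_j}$ increases by $2\pi n_k$ as $z$ traverses a small loop about $a_k$) shows that $\pi_2 \circ \psi$ winds $n_j$ times about the origin along the $j$-th inner boundary circle.

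Finally, I would feed $\psi$ into Theorem~\ref{thm_embedRiemannSurface} to obtain a proper holomorphic embedding $\sigma : X \to \CC\times\CC^*$ that is homotopic, as a continuous map, to $\psi|_X$; this preservation of homotopy class is precisely the extra conclusion built into that theorem. Since $\sigma \simeq \psi|_X$, they realise the same winding numbers $(n_1,\ldots,n_n)$, which by construction are those of $f$; thus $\sigma$ and $f$ determine the same class in $H^1(X;\ZZ)$ and so, by the classification above, are homotopic. The real obstacle lies not in this reduction but in Theorem~\ref{thm_embedRiemannSurface} (and behind it Theorem~\ref{thm_Wold}); once those are in hand, the strong Oka property for circular domains reduces to the explicit formula above combined with homotopy-theoretic bookkeeping.
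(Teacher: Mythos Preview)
Your proposal is correct and essentially identical to the paper's proof: the paper also classifies maps by the winding numbers $k_j$ around the deleted discs, writes down the same explicit embedding $p(z) = (z, \prod_{j=1}^n (z-a_j)^{k_j})$ of $\overline{X}$, and then invokes Theorem~\ref{thm_embedRiemannSurface}. The only cosmetic difference is that the paper treats the simply connected case $n=0$ separately with the embedding $p(z)=(0,2+z)$, whereas your formula covers it automatically via the empty product.
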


In order to prove Theorem~\ref{thm_strongOka} we will prove the following result on embeddings of Riemann surfaces into $\CC\times\CC^*$. Note that if $\overline X$ is a compact bordered Riemann surface then $\partial\overline X$ has finitely many components $\gamma_1,\dots,\gamma_m$, each a compact, smooth one-dimensional manifold diffeomorphic to the circle.

\begin{theorem}
\label{thm_embedRiemannSurface}
Let $\overline X$ be a compact bordered Riemann surface and $f : \overline X \to \CC\times\CC^*$ be an embedding. Then $f$ can be approximated, uniformly on compact subsets of $X$, by embeddings $X \to \CC\times\CC^*$ that are homotopic to $f|_X$.
\end{theorem}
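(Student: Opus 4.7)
The plan is to reduce to the Wold embedding theorem (Theorem~\ref{thm_Wold}) by enlarging $\overline X$ to a bordered Riemann surface $\overline Y$ whose boundary components are all non-compact, extending $f$ to a holomorphic embedding $\widetilde f:\overline Y\to\CC\times\CC^*$ whose boundary image satisfies the nice projection property, applying Theorem~\ref{thm_Wold} to $\widetilde f$, and then restricting the resulting embedding of $Y=\overline Y\setminus\partial\overline Y$ back to $X\subset Y$. Since $f$ is a holomorphic embedding of the compact set $\overline X$, it extends to a holomorphic embedding of an open neighborhood of $\overline X$ in some ambient open Riemann surface (for instance, a neighborhood of $\overline X$ in its double), giving room to perform the enlargement.

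To construct $\overline Y$ and $\widetilde f$: on each boundary circle $\gamma_j$ choose a point $p_j$, and attach within the ambient Riemann surface a thin half-strip to $\gamma_j$ straddling $p_j$, thereby replacing each compact boundary circle by a non-compact boundary component homeomorphic to $\RR$. Extend $f$ across each attached strip as a holomorphic embedding whose image escapes to infinity in $\CC\times\CC^*$ along a chosen smooth curve. The directions along which these whiskers escape must be chosen so that, after composition with a suitable automorphism $\alpha\in\Aut(\CC\times\CC^*)$, the projections $\pi_1\circ\alpha\circ\widetilde f(\partial\overline Y)$ become pairwise disjoint proper arcs in $\CC$ whose ends tend to infinity in mutually distinct asymptotic directions. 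This simultaneously delivers conditions (1), (2)(a), and (2)(b) of the nice projection property (Definition~\ref{niceprojectionproperty}). Here one exploits the abundance of automorphisms of $\CC\times\CC^*$ coming from the density property and Theorem~\ref{thm_AndersenLempertHolConvex}, mirroring the strategy used in $\CC^2$ by Wold~\cite{Wold:2006,Wold:2006a} and Forstneri\v c-Wold~\cite{Forstneric:2009}. I expect this to be the principal obstacle, since the class of automorphisms of $\CC\times\CC^*$ is more restricted than that of $\CC^2$ and one must verify that enough flexibility remains to achieve the required geometric configuration.

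Once this is done, Theorem~\ref{thm_Wold} applied to $\widetilde f$ with a compact set $K'\subset Y$ containing the given $K\subset X$ delivers a Fatou-Bieberbach domain $\Omega\subset\CC\times\CC^*$ with $\widetilde f(Y)\subset\Omega$ and $\widetilde f(\partial\overline Y)\subset\partial\Omega$, a biholomorphism $\phi:\Omega\to\CC\times\CC^*$, and an embedding $\sigma=\phi\circ\widetilde f|_Y$ that uniformly approximates $\widetilde f$ on $K'$. The restriction $\sigma|_X$ is then the desired approximating embedding of $X$.

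It remains to verify that $\sigma|_X$ is homotopic to $f|_X$. By the proof of Theorem~\ref{thm_Wold}, $\phi$ is the locally uniform limit on $\Omega$ of compositions $\phi_m=\Psi_m\circ\cdots\circ\Psi_1$ of automorphisms of $\CC\times\CC^*$, each of which is homotopic to the identity by Lemma~\ref{existauto}(c). Hence for any loop $\gamma$ in $\Omega$, the image $\gamma(S^1)$ is a compact subset of $\Omega$, so $\phi_m\circ\gamma\to\phi\circ\gamma$ uniformly; combining this with the homotopies $\Psi_j\simeq\id$ in $\CC\times\CC^*$ yields $\phi\circ\gamma\simeq\phi_m\circ\gamma\simeq\iota\circ\gamma$ for large $m$, where $\iota:\Omega\hookrightarrow\CC\times\CC^*$ is the inclusion. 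Therefore $\phi_*=\iota_*$ on $\pi_1(\Omega)$, and since $\CC\times\CC^*$ deformation retracts to $S^1$ and is thus an Eilenberg-MacLane space $K(\ZZ,1)$, this equality of induced maps on $\pi_1$ upgrades to a homotopy $\phi\simeq\iota$ as maps $\Omega\to\CC\times\CC^*$. Composing with $\widetilde f|_X:X\to\Omega$ produces a homotopy from $\sigma|_X=\phi\circ f|_X$ to $\iota\circ f|_X=f|_X$, completing the argument.
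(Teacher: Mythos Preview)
Your overall strategy has a genuine gap concerning properness. Recall that in this paper all embeddings are required to be proper. After you enlarge $\overline X$ to $\overline Y$ by attaching half-strips and apply Theorem~\ref{thm_Wold} to obtain a proper holomorphic embedding $\sigma:Y\to\CC\times\CC^*$, the restriction $\sigma|_X$ will \emph{not} be proper. Indeed, the arcs of $\partial\overline X$ along which the strips are glued lie in the interior $Y$ of $\overline Y$, so there exist boundary points $p\in\partial X$ with $p\in Y$; any sequence in $X$ converging to such a $p$ is sent by $\sigma$ to a sequence converging to the finite point $\sigma(p)\in\CC\times\CC^*$. Hence $\sigma|_X$ is not proper, and your construction does not produce an embedding of $X$ in the sense required by the theorem. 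This is not a technicality that can be repaired by choosing the strips more carefully: as soon as $X$ fails to be closed in $Y$, the restriction of a proper map on $Y$ cannot be proper on $X$.

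The paper avoids this by working in the opposite direction: instead of \emph{adding} material to $\overline X$, it \emph{removes} one point $a_j$ from each boundary circle $\gamma_j$. First $f$ is perturbed so that each $a_j$ becomes an exposed point (Theorem~\ref{thm_exposeBoundaryPoints}), and then one composes with the rational shear
\[
(z,w)\longmapsto\Bigl(z+\sum_{j}\frac{\alpha_j}{w-\pi_2(f(a_j))},\,w\Bigr),
\]
which sends each $f(a_j)$ to infinity and, for suitable arguments of the $\alpha_j$, makes the images of the punctured boundary circles satisfy the nice projection property (Theorem~\ref{thm_ensureNiceProjectionProperty}). The resulting bordered surface $\overline X\setminus\{a_1,\dots,a_m\}$ has non-compact boundary components, but crucially its interior is still exactly $X$, so Theorem~\ref{thm_Wold} now yields a proper embedding of $X$ itself. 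Your homotopy argument at the end is essentially correct and coincides with Lemma~\ref{lem_WoldEmbeddingHomotopyClass}, but the construction preceding it must be replaced by this exposed-point-plus-shear technique (which is in fact precisely the mechanism in the Forstneri\v c--Wold paper you cite).
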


This result parallels that of Forstneri\v c and Wold in \cite{Forstneric:2009} for embeddings of certain Riemann surfaces into $\CC^2$, except that we prove an additional fact relating to the homotopy classes of embeddings produced by the theorem (the corresponding fact for embeddings into $\CC^2$ is of course trivial). In fact, the proof given in \cite{Forstneric:2009} adapts immediately to the situtation of embeddings into $\CC\times\CC^*$, so that we only outline the existing proof while taking care to demonstrate that the embeddings obtained are homotopic to the original map.

Note that Theorem~\ref{thm_embedRiemannSurface} makes no statement as to which bordered Riemann surfaces can be embedded into $\CC\times\CC^*$. While it is possible to explicitly construct embeddings of some classes of bordered Riemann surfaces into $\CC\times\CC^*$, doing so in general is a difficult outstanding problem, as it is for embeddings into $\CC^2$.

We remark that Theorem~\ref{thm_embedRiemannSurface} is in a similar spirit as a result of Drinovec-Drnov\v sek and Forstneri\v c \cite{Drinovec-Drnovsek:2010} where it is proved that a continuous map $f : \overline X \to Y$, holomorphic in $X$, where $X$ is a smoothly bounded, relatively compact, strongly pseudoconvex domain in a Stein manifold $S$, and $Y$ is a Stein manifold, can be approximated uniformly on compact subsets of $X$ by proper holomorphic embeddings $X \to Y$, provided $\dim Y \ge 2 \dim X + 1$.

The proof of Theorem~\ref{thm_embedRiemannSurface} is broken into three parts, the first of which requires the concept of an \emph{exposed point} \cite[Def. 4.1]{Forstneric:2009} of an embedded bordered Riemann surface.

\begin{definition}
Let $f : \overline X \to \CC\times\CC^*$ be an embedding of the bordered Riemann surface $\overline X$ into $\CC\times\CC^*$, and let $p \in \overline X$. We say $p$ is an \emph{$f$-exposed point} (or that $f(p)$ is an \emph{exposed point}) if the complex line
\[
	\pi_2^{-1}(\pi_2(f(p))) = \CC\times\{\pi_2(f(p))\}
\]
intersects $f(\overline X)$ precisely at $f(p)$ and the intersection is transverse.
\end{definition}

\begin{theorem}
\label{thm_exposeBoundaryPoints}
Let $\overline X$ be a compact bordered Riemann surface. For each component $\gamma_j$ of $\partial \overline X$, $j = 1,\dots,m$, let $a_j \in \gamma_j$ and let $U_j$ be an open neighbourhood of $a_j$ in $\overline X$. Let $f :\overline X \to \CC\times\CC^*$ be an embedding. Then $f$ can be approximated uniformly on $\overline X \setminus \bigcup\limits_{j=1}^m U_j$ by embeddings $\overline X \to \CC\times\CC^*$ homotopic to $f$, such that the points $a_1,\dots,a_m$ are all $f$-exposed.
\end{theorem}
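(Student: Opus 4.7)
The plan is to adapt the proof of Theorem~4.2 of Forstneri\v c and Wold \cite{Forstneric:2009} from the target $\CC^2$ to the target $\CC\times\CC^*$, while additionally tracking the homotopy class. Since the points $a_j$ lie on distinct boundary components, the constructions may be performed independently near each $a_j$. For each $j$, I first choose a smoothly embedded arc $E_j : [0,1) \to \CC\times\CC^*$ with $E_j(0) = f(a_j)$ that leaves every compact subset of $\CC\times\CC^*$ as $t \to 1$, such that $E_j((0,1))$ is disjoint from $f(\overline X)$ and from the other $E_k$, and such that $\pi_2 \circ E_j$ is injective with image meeting $\pi_2(f(\overline X))$ only at $\pi_2(f(a_j))$. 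These arcs exist by general-position arguments: $\pi_2(f(\overline X))$ is a compact subset of $\CC^*$ whose complement reaches both ends $|w|\to 0$ and $|w|\to\infty$, and $f(\overline X)$ has real codimension $2$ in $\CC\times\CC^*$.

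The central step is to produce a holomorphic map $g : \overline X \to \CC\times\CC^*$ that is an embedding, agrees with $f$ outside a small neighborhood $V_j \subset U_j$ of each $a_j$, pushes $f(a_j)$ far along $E_j$, and exposes each $a_j$. I would write $g$ multiplicatively in the second coordinate as
\[
g = \bigl(\pi_1 \circ f + u,\ (\pi_2 \circ f)\cdot e^{v}\bigr),
\]
with $u, v$ continuous on $\overline X$, holomorphic on $X$, and essentially supported near the $a_j$, realizing an explicit holomorphic boundary spike at each $a_j$ in the style of Forstneri\v c-Wold (built from a branch of $\log$ or a fractional power of a local conformal coordinate sending $a_j$ to a boundary point of a half-disc). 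The multiplicative form ensures the second coordinate of $g$ lies in $\CC^*$ automatically. Then I apply Mergelyan-type approximation on the compact bordered Riemann surface $\overline X$ to $u$ and $v$ separately, yielding $\tilde u, \tilde v$ holomorphic in a neighborhood of $\overline X$; the resulting map $\sigma = (\pi_1\circ f + \tilde u,\ (\pi_2\circ f)\cdot e^{\tilde v})$ is the desired embedding, since injectivity, transversality, and the exposure condition are all open under uniform approximation, provided the Mergelyan error is taken small enough.

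For the homotopy claim, the maps $f$, $g$, and $\sigma$ are joined by the explicit continuous paths
\[
s \mapsto \bigl(\pi_1\circ f + s u,\ (\pi_2\circ f)\,e^{s v}\bigr), \qquad s \mapsto \bigl(\pi_1\circ f + u + s(\tilde u - u),\ (\pi_2\circ f)\,e^{v + s(\tilde v - v)}\bigr),
\]
both of which remain in $\CC\times\CC^*$ for all $s \in [0,1]$ because the exponential always lies in $\CC^*$. Concatenating these yields a homotopy from $f|_X$ to $\sigma|_X$ through continuous maps $X \to \CC\times\CC^*$. The main obstacle I anticipate is the construction in the central step of the spike functions $u, v$ realizing exposure while keeping $g$ an injective holomorphic map: this is the technical heart of the Forstneri\v c-Wold argument and must be executed so that (i) each $a_j$ is $g$-exposed, (ii) $g$ is injective on all of $\overline X$, and (iii) $u,v$ are small away from the $U_j$. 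All of this takes place in neighborhoods where $\CC\times\CC^*$ is locally biholomorphic to $\CC^2$ (the removed line $\{w=0\}$ is kept at positive distance from $f(\overline X)$), so the Forstneri\v c-Wold construction transfers with only cosmetic changes.
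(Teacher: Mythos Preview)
Your proposal is essentially correct and arrives at the same result, but it diverges from the paper's proof in two tactical choices. For keeping the image in $\CC\times\CC^*$, you build this in structurally via the multiplicative ansatz $(\pi_2\circ f)\cdot e^v$ in the second coordinate; the paper instead invokes Forstneri\v c--Wold's $\CC^2$-construction as a black box, noting that the output $F$ can be confined to any neighbourhood of $f(\overline X)\cup\bigcup_j\lambda_j$, and then chooses the finite arcs $\lambda_j\subset\CC\times\CC^*$ and the approximation tight enough that $F(\overline X)\subset\CC\times\CC^*$. The paper's route is shorter because it does not reopen the spike construction; yours is more self-contained but requires checking that the Forstneri\v c--Wold local models really can be written in the form $(u,v)$ you propose. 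For the homotopy, you exploit your ansatz to write down explicit straight-line homotopies in $(u,v)$; the paper instead argues topologically: after shrinking each $U_j$ to a contractible half-disc, $\overline X$ deformation retracts onto $\overline X\setminus\bigcup_j U_j$, and on that set $f$ and $F$ are uniformly close enough that the linear interpolation in $\CC^2$ avoids $\{w=0\}$. That argument is agnostic to the internal structure of $F$ and so pairs naturally with the black-box citation. Your use of arcs $E_j$ escaping to infinity is a harmless variant of the paper's finite arcs ending at pre-chosen exposed points $q_j$; in either case one only needs $\pi_2(g(a_j))\notin\pi_2(f(\overline X\setminus\bigcup_j U_j))$.
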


\begin{proof}[Proof (sketch)]
In \cite[Thm. 4.2]{Forstneric:2009} the authors begin by choosing for each $j = 1,\dots,m$ a point $a_j$ in the boundary component $\gamma_j$ of $\overline X$, together with a smoothly embedded curve $\lambda_j$ in $\CC^2$, starting at $p_j = f(a_j)$ and ending at a point $q_j$, such that $\lambda_j$ meets $f(\overline X)$ transversally only at $p_j$, the $\lambda_j$ are pairwise disjoint, and the endpoints $q_j$ are exposed for the set $f(\overline X) \cup \bigcup\limits_{j=1}^m \lambda_j$. They then use this information to construct an embedding $F : \overline X \to \CC^2$ that approximates $f$ uniformly on $\overline X \setminus \bigcup\limits_{j=1}^m U_j$ such that each point $a_j$ is $F$-exposed, where each $U_j$ is an arbitrarily small open neighbourhood in $\overline X$ of the boundary point $a_j$. From the construction in \cite{Forstneric:2009} it is clear that the image $F(\overline X)$ can be made to lie within an arbitrary open neighbourhood $V$ of the set $f(\overline X) \cup \bigcup\limits_{j=1}^m \lambda_j$. Thus in our case, beginning with an embedding $f : \overline X \to \CC\times\CC^*$, we may apply this construction first in $\CC^2$ with the curves $\lambda_j$ chosen to lie in $\CC\times\CC^*$ and then approximate $f$ sufficiently well by $F$ to ensure that $F(\overline X)$ lies entirely in $\CC\times\CC^*$.

It remains to be shown that $F$ and $f$ are homotopic as maps $\overline X \to \CC\times\CC^*$. If we choose each $U_j$ to be a sufficiently small contractible open neighbourhood of $a_j \in \partial \overline X$ then it is clear that each $\overline U_j$ deformation retracts within $\overline X$ to $\overline U_j \setminus U_j$, where the closure of $U_j$ is taken in $\overline X$. We then see that $\overline X$ deformation retracts to $\overline X \setminus \bigcup\limits_{j=1}^m U_j$. Letting $F$ approximate $f$ sufficiently well on $\overline X \setminus \bigcup\limits_{j=1}^m U_j$, a convex linear combination will deform $f\big|({\overline X \setminus \bigcup\limits_{j=1}^m U_j})$ to $F\big|({\overline X \setminus \bigcup\limits_{j=1}^m U_j})$ without passing through the missing line in $\CC\times\CC^*$. Thus $f : \overline X \to \CC\times\CC^*$ and $F : \overline X \to \CC\times\CC^*$ are homotopic.
\end{proof}

Once we have constructed an embedding of $\overline X$ that exposes a point $a_j$ in each component of $\partial\overline X$, the following result shows that we can send the points $a_j$ to infinity in $\CC\times\CC^*$ in such a way as to obtain an embedding of the bordered Riemann surface $\overline X \setminus \{a_1,\dots,a_m\}$ so that the image of its (now non-compact) boundary components satisfy the nice projection property, while preserving the homotopy class of the embedding.

\begin{theorem}
\label{thm_ensureNiceProjectionProperty}
Let $f : \overline X \to \CC\times\CC^*$ be an embedding of the compact bordered Riemann surface $\overline X$ such that every component $\gamma_j$, $j = 1,\dots, m$, of $\partial \overline X$ contains an $f$-exposed point $a_j$. Then $f$ can be approximated uniformly on compact subsets of $X$ by embeddings $\overline X \setminus \{a_1,\dots,a_m\} \to \CC\times\CC^*$, whose restrictions to $X$ are homotopic to $f|_X$, and such that the image of the boundary components of $\overline X \setminus \{a_1,\dots,a_m\}$ under each approximating embedding satisfy the nice projection property.
\end{theorem}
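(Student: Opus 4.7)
The plan is to construct the approximating embedding as $F = \Phi \circ f$, where $\Phi$ is a rational shear of $\CC\times\CC^*$ with simple poles along the lines $\CC\times\{b_j\}$, with $b_j = \pi_2(f(a_j)) \in \CC^*$. For small complex constants $c_1,\dots,c_m$ to be chosen, set
\[
    \Phi(z,w) \;=\; \left(z + \sum_{j=1}^m \frac{c_j}{w - b_j},\; w\right),
\]
which is a biholomorphism of $\CC \times (\CC^* \setminus \{b_1,\dots,b_m\})$ onto itself, with inverse obtained by negating the $c_j$. Because each $a_j$ is $f$-exposed, $\pi_2 \circ f - b_j$ has a simple zero at $a_j$ and is nonvanishing elsewhere on $\overline X$, so $F = \Phi \circ f$ is a well-defined injective holomorphic map on $\overline X \setminus \{a_1,\dots,a_m\}$ into $\CC\times\CC^*$; it is proper because $|\pi_1(F(p))| \to \infty$ as $p \to a_j$.

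For the approximation, a compact set $K \subset X$ is bounded away from $\{a_1,\dots,a_m\}$, so $\pi_2(f(K))$ is bounded away from each $b_j$ and the correction $\sum_j c_j/(\pi_2 \circ f - b_j)$ is uniformly bounded on $K$. Choosing the $|c_j|$ sufficiently small therefore makes $F$ uniformly close to $f$ on $K$. The straight-line family
\[
    H_s(p) \;=\; \bigl((1-s)\pi_1(f(p)) + s\pi_1(F(p)),\; \pi_2(f(p))\bigr), \quad s \in [0,1],
\]
is a homotopy in $\CC\times\CC^*$ between $f|_X$ and $F|_X$, since its second coordinate equals $\pi_2(f(p)) \in \CC^*$ for all $s$ and so never crosses the missing line.

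To verify the nice projection property of Definition~\ref{niceprojectionproperty} for the image $\Gamma$ of the boundary of $\overline X \setminus \{a_1,\dots,a_m\}$, I would perform a local analysis at each puncture. Parametrising $\gamma_j$ near $a_j$ by a real parameter $t$ with $\gamma_j(0) = a_j$, exposedness gives $\pi_2(f(\gamma_j(t))) = b_j + w'_j t + O(t^2)$ with $w'_j \ne 0$, so a direct substitution yields
\[
    \pi_1(F(\gamma_j(t))) \;=\; \frac{c_j}{w'_j\, t} + O(1) \quad \text{as } t \to 0.
\]
The two ends of each curve $\beta_j = F \circ \gamma_j$ therefore escape to infinity in the opposite asymptotic directions $\pm c_j/w'_j$, giving condition (1) of Definition~\ref{niceprojectionproperty} with $\alpha = \id$.

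The main obstacle is verifying conditions (2a) and (2b) simultaneously, which forces a generic choice of the $c_j$. Provided the $m$ numbers $c_j/w'_j$ have pairwise distinct arguments modulo $\pi$ --- an open and dense condition on $(c_1,\dots,c_m)$ compatible with the smallness already imposed --- the $2m$ asymptotic rays point in $2m$ distinct directions. Then for all sufficiently large $r$, $\pi_1(\Gamma) \setminus \overline\Delta_r$ consists of $2m$ disjoint embedded tails lying in $2m$ disjoint angular sectors; injectivity on each tail follows from the fact that $t \mapsto c_j/(w'_j t)$ is injective on each side of $0$, and injectivity between distinct tails from the disjointness of the sectors, yielding condition (2b). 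Condition (2a) also holds, because the complement of these $2m$ tails together with $\overline\Delta_r$ has only unbounded angular components. If any residual coincidence must be fixed, one may post-compose with a further shear $\alpha(z,w) = (z + h(w), w)$ with $h \in \OO(\CC^*)$ obtained by Runge approximation to perturb the remaining ends apart; this $\alpha \in \Aut(\CC\times\CC^*)$ is precisely the automorphism permitted in Definition~\ref{niceprojectionproperty}.
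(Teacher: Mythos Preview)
Your proposal is correct and follows essentially the same approach as the paper: the paper also post-composes $f$ with the rational shear $g(z,w) = \bigl(z + \sum_j \alpha_j/(w - \pi_2(f(a_j))),\, w\bigr)$, chooses the $\alpha_j$ small for the approximation, and uses the straight-line homotopy in the first coordinate. The paper's sketch defers the verification of the nice projection property to \cite{Forstneric:2009} and \cite{Wold:2006}, whereas you spell out the local analysis (asymptotic directions $\pm c_j/w'_j$ and the genericity condition that their arguments be distinct modulo $\pi$); this is exactly the content of those references, so your argument is a faithful expansion of what the paper cites.
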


\begin{proof}[Proof (sketch)]
Following \cite[Thm. 5.1]{Forstneric:2009} we define a rational shear $g$ by
\[
	g(z,w) = (z + \sum_{j=1}^m \frac{\alpha_j}{w - \pi_2(f(a_j))},w)\,.
\]
In \cite{Forstneric:2009} it is explained (with reference also to \cite{Wold:2006}) that the arguments of $\alpha_1,\dots,\alpha_m \in \CC^*$ may be chosen so that the image under $g \circ f$ of the boundary components of $\overline X \setminus \{a_1,\dots,a_m\}$ satisfy the nice projection property. Choosing $\alpha_1,\dots,\alpha_m$ sufficiently small, we can ensure that the map $g\circ f : \overline X \setminus \{a_1,\dots,a_m\} \to \CC\times\CC^*$ approximates $f$ uniformly on a given compact subset of $X$. As $g$ is an invertible holomorphic map on $f(\overline X) \setminus \{f(a_1),\dots,f(a_m)\}$, $g\circ f$ gives an embedding $\overline X \setminus \{a_1,\dots,a_m\} \to \CC\times\CC^*$ with the desired properties, and $(g \circ f)|_X$ is clearly homotopic to $f|_X$ by a convex linear combination in the first coordinate.
\end{proof}

Finally, we prove the following lemma on the homotopy class of the embedding $\sigma : X \to \CC\times\CC^*$ given by the Wold embedding theorem (Theorem~\ref{thm_Wold}). Combining Theorems~\ref{thm_Wold},~\ref{thm_exposeBoundaryPoints} and \ref{thm_ensureNiceProjectionProperty} with the lemma then immediately proves Theorem~\ref{thm_embedRiemannSurface}.

\begin{lemma}
\label{lem_WoldEmbeddingHomotopyClass}
Let $X$ be an open Riemann surface that is the interior of a bordered Riemann surface $\overline{X}$ whose boundary components are non-compact and finite in number. Let $\psi : \overline X \to \CC\times\CC^*$ be an embedding such that $\psi(\partial\overline{X})$ has the nice projection property. Then the embedding $\sigma : X \to \CC\times\CC^*$ given by Theorem~\ref{thm_Wold} is homotopic to $\psi|_X$.
\end{lemma}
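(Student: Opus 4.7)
The approach is to exploit asphericity of the target. The space $\CC\times\CC^*$ is an Eilenberg--MacLane space of type $K(\ZZ,1)$: its universal cover is the contractible space $\CC\times\CC$, via $(z,u) \mapsto (z,e^u)$, with deck group $\ZZ$. Likewise $\Omega$, being biholomorphic to $\CC\times\CC^*$ via $\phi$, is a $K(\ZZ,1)$. Since $\pi_1(\CC\times\CC^*) = \ZZ$ is abelian and $X$ is a CW complex, two maps $X \to \CC\times\CC^*$ are freely homotopic if and only if they induce the same homomorphism on $\pi_1$. Because both $\sigma = \phi \circ \psi|_X$ and $\psi|_X$ factor through $\psi|_X : X \to \Omega$, proving the lemma reduces to showing that the inclusion $\iota : \Omega \hookrightarrow \CC\times\CC^*$ and the biholomorphism $\phi : \Omega \to \CC\times\CC^*$ induce the same homomorphism $\pi_1(\Omega) \to \pi_1(\CC\times\CC^*)$.

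To establish this equality I would recall from the proof of Theorem~\ref{thm_Wold} that $\phi = \lim_{m \to \infty} \phi_m$ uniformly on compact subsets of $\Omega$, where $\phi_m = \Psi_m \circ \cdots \circ \Psi_1$ is a finite composition of automorphisms of $\CC\times\CC^*$. Crucially, by condition~(c) of Lemma~\ref{existauto}, each $\Psi_j$ is homotopic to $\id$ as a self-map of $\CC\times\CC^*$, hence so is each $\phi_m$. Given a loop $\gamma : S^1 \to \Omega$ with compact image, $\phi_m \circ \gamma \to \phi \circ \gamma$ uniformly on $S^1$. For $m$ sufficiently large the two loops lie in a common compact subset of the open manifold $\CC\times\CC^*$ and are uniformly close, so they can be joined by a short straight-line homotopy that stays in $\CC\times\CC^*$; hence $[\phi \circ \gamma] = [\phi_m \circ \gamma]$ in $\pi_1(\CC\times\CC^*)$. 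Combined with $[\phi_m \circ \gamma] = [\iota \circ \gamma]$, which follows from $\phi_m \simeq \id$, this yields $\phi_* = \iota_*$ on $\pi_1(\Omega)$.

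I expect no genuine obstacle. The two key ingredients --- asphericity of the target with abelian fundamental group, and the homotopy-to-identity condition~(c) which was built into Lemma~\ref{existauto} precisely so that this sort of argument would be available --- are both already in hand. The only verification required is the routine fact that uniformly close loops in the open subset $\CC\times\CC^* \subset \CC^2$ can be connected by a small straight-line homotopy that avoids the missing plane $\CC\times\{0\}$.
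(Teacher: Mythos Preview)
Your proposal is correct and follows essentially the same route as the paper: reduce to showing $\phi_* = \iota_*$ on $\pi_1(\Omega)$ using that $\CC\times\CC^*$ is aspherical with abelian fundamental group, then exploit the uniform convergence $\phi_m \to \phi$ on compacts together with condition~(c) of Lemma~\ref{existauto}. The only cosmetic difference is that the paper works with $\phi^{-1}$ applied to a fixed generator (invoking Dixon--Esterle for the convergence of the inverses), whereas you apply $\phi$ directly to an arbitrary loop in $\Omega$, which is marginally cleaner since it sidesteps that citation.
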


\begin{proof}
We will henceforth write $\psi$ for the restriction $\psi|_X : X \to \CC\times\CC^*$. Recall that in Theorem~\ref{thm_Wold} we construct a Fatou-Bieberbach domain $\Omega$ in $\CC\times\CC^*$ and a biholomorphism $\phi : \Omega \to \CC\times\CC^*$ such that $\psi(X)\subset \Omega$. We may therefore factorise $\psi$ as $\iota \circ \widetilde \psi$, where $\widetilde \psi : X \to \Omega$ is given by restricting the target of $\psi$ to $\Omega$, and $\iota : \Omega \hookrightarrow \CC\times\CC^*$ is the inclusion. From Theorem~\ref{thm_Wold} the embedding $\sigma : X \to \CC\times\CC^*$ is given by $\sigma = \phi \circ \psi = \phi \circ \widetilde\psi$. Comparing the factorisations of $\sigma$ and $\psi$ we see that in order to show these two maps are homotopic it suffices to prove that $\phi$ and $\iota$ are homotopic maps from $\Omega$ to $\CC\times\CC^*$.

Both spaces $\Omega$ and $\CC\times\CC^*$ are homotopy equivalent to the circle $S^1$. Using the homotopy equivalences we obtain induced maps $\tilde\phi, \tilde\iota : S^1 \to S^1$, and these two maps will be homotopic if and only if the maps $\phi$ and $\iota$ are homotopic. However, the homotopy class of a map from $S^1$ to $S^1$ is purely determined by its degree, so that $\tilde\phi$ and $\tilde\iota$ are homotopic if and only if they have the same degree. Transferring this condition back to the original maps we see that $\phi$ and $\iota$ are homotopic if and only if they take a loop that generates $\pi_1(\Omega)$ to homotopic loops in $\pi_1(\CC\times\CC^*)$.

We take the loop $\alpha : [0,1] \to \CC\times\CC^*$, $\alpha(t) = (0,e^{2\pi it})$, which generates $\pi_1(\CC\times\CC^*)$. Using the biholomorphism $\phi$ we pull this back to the loop $\phi^{-1} \circ \alpha$ in $\Omega$, which generates $\pi_1(\Omega)$. Clearly, the map $\phi$ applied to $\phi^{-1}\circ\alpha$ gives the loop $\alpha$ in $\CC\times\CC^*$. We now show that $\iota$ also takes the loop $\phi^{-1}\circ\alpha$ to a loop homotopic to $\alpha$, which will prove the result. That is, we show that $\phi^{-1}\circ\alpha$, considered as a loop in $\CC\times\CC^*$, is homotopic to $\alpha$.

Recall the definition of $\phi = \lim\limits_{m\to\infty} \phi_m$, where each $\phi_m \in \Aut(\CC\times\CC^*)$, and the convergence is uniform on compact subsets of $\Omega \subset \CC\times\CC^*$. By \cite[Thm. 5.2]{Dixon:1986} we also have $\phi^{-1} = \lim\limits_{m\to\infty}\phi_m^{-1}$, uniformly on compact subsets of $\CC\times\CC^*$. Choose $\epsilon > 0$ sufficiently small so that the open $\epsilon$-neighbourhood $U$ of the image of $\phi^{-1}\circ\alpha$ is relatively compact in $\CC\times\CC^*$, and then take $N \in \NN$ so that for all $m \ge N$,
\[
	\lVert \phi^{-1}\circ\alpha - \phi^{-1}_m\circ\alpha\rVert_{[0,1]} < \epsilon\,,
\]
where $\lVert \cdot \rVert$ denotes the Euclidean norm.

For $m \ge N$, each loop $\phi^{-1}_m\circ\alpha$ can be deformed within $U$ by a convex linear combination to the loop $\phi^{-1}\circ\alpha$ so that $\phi^{-1}_m\circ\alpha$ represents the same class as $\phi^{-1}\circ\alpha$ in $\pi_1(\CC\times\CC^*)$. However, by construction in Lemma~\ref{existauto} and Theorem~\ref{thm_Wold}, each $\phi_m$, and hence each $\phi^{-1}_m$, is homotopic to the identity map on $\CC\times\CC^*$, so that $\phi^{-1}\circ\alpha$ is homotopic to $\alpha$, as required.
\end{proof}

Note that in the above proof we established that $\iota : \Omega \hookrightarrow \CC\times\CC^*$ induces an isomorphism of the fundamental groups of the Fatou-Bieberbach domain $\Omega$ and $\CC\times\CC^*$. We can interpret this as saying that the Fatou-Bieberbach domain $\Omega$ is untwisted inside $\CC\times\CC^*$. It is unclear to me whether another method of constructing a Fatou-Bieberbach domain in $\CC\times\CC^*$ could give rise to a twisted Fatou-Bieberbach domain, that is, one for which the inclusion $\Omega \hookrightarrow \CC\times\CC^*$ is not surjective on fundamental groups.

We may now prove Theorem~\ref{thm_strongOka}.

\begin{proof}[Proof of Theorem~\ref{thm_strongOka}]
Suppose first that $X$ is the $n$-connected circular domain
\[
	X = \Delta_1 \setminus \bigcup_{j=1}^n (a_j + \overline\Delta_{r_j})
\]
where $n > 0$ and the $r_j$ and $a_j$ satisfy conditions as in Definition~\ref{def_circularDomain}, and let the compact bordered Riemann surface $\overline X$ be its closure in $\CC$. Let $f : X \to \CC\times\CC^*$ be a continuous map. For a sufficiently small choice of $\delta > 0$ the loops $\beta_j(t) = a_j + (r_j + \delta) e^{2\pi it}$, $t \in [0,1]$, $j = 1, \dots, n$, lie in $X$ with each $\beta_j$ only encircling the hole given by deleting the disc $\overline\Delta_{r_j}$. If we pick an arbitrary basepoint $x_0 \in X$ and for each $j$ let $\tilde\beta_j$ be the conjugation of $\beta_j$ by a path in $X$ from $x_0$ to the basepoint $a_j + r_j + \delta$ of $\beta_j$, then $\pi_1(X,x_0)$ equals the free group generated by the loops $\tilde\beta_1,\dots,\tilde\beta_n$. 

Suppose $g, h : X \to \CC\times\CC^*$ are continuous. Since $X$ is homotopy equivalent to a bouquet of circles and $\CC\times\CC^*$ to a single circle, and maps between such spaces are determined up to homotopy by the number of times they wind each circle in the source about the target circle, we see that $g$ and $h$ are homotopic precisely when the loops $g\circ\beta_j$ and $h\circ\beta_j$ are homotopic in $\CC\times\CC^*$ for all $j = 1,\dots,n$. In order to prove Theorem~\ref{thm_strongOka} we now give the construction of an embedding $\tilde f$ that winds each loop $\beta_j$ about the missing line in $\CC\times\CC^*$ the same number of times as does $f$.

Define integers $k_j$, $j = 1,\dots,n$, so that in $\pi_1(\CC\times\CC^*)$ we have $[f\circ\beta_j] = [\alpha]^{k_j}$, where $\alpha(t) = (0,e^{2\pi it})$, $t \in [0,1]$, is a generator of $\pi_1(\CC\times\CC^*)$. Now take the holomorphic function $q(z) = \prod\limits_{j=1}^n (z - a_j)^{k_j}$ on $\overline X$, and consider the map $p : \overline X \to \CC\times\CC^*$ given by $p(z) = (z,q(z))$. This is clearly an embedding of $\overline X$ into $\CC\times\CC^*$ such that the image of each loop $\beta_j$ under $p$ winds $k_j$ times about the missing line. Thus $f$ and $p|_X$ are homotopic.

We now apply Theorem~\ref{thm_embedRiemannSurface} to give an embedding $\tilde f : X \to \CC\times\CC^*$ that is homotopic to $p|_X$, and therefore also homotopic to $f$.

In the case that $X$ is simply connected, we have $X = \Delta_1$ and every continuous map $f : X \to \CC\times\CC^*$ is null-homotopic. We take the embedding $p : \overline X \to \CC\times\CC^*$ given by $p(z) = (0,2+z)$, and then $p(1) \in p(\partial \overline X)$ is an exposed point. Applying Theorem~\ref{thm_ensureNiceProjectionProperty} and then Theorem~\ref{thm_Wold} we obtain an embedding $\tilde f : X \to \CC\times\CC^*$ that is homotopic to $f$, thereby completing the proof.
\end{proof}

A number of special cases of embeddings of open Riemann surfaces into $\CC\times\CC^*$ are worth discussing explicitly. Embeddings of the open disc and proper open annuli are of historical interest, and applying Theorem~\ref{thm_strongOka} we see that these can all be embedded into $\CC\times\CC^*$. In the case of a proper annulus, we can ensure the embedding winds the annulus around the missing line in $\CC\times\CC^*$ any number of times in either direction. Taking an embedding with degree 1 we then see that the annulus can be acyclically embedded into $\CC\times\CC^*$, that is, embedded such that the embedding induces a homotopy equivalence between the annulus and $\CC\times\CC^*$. The existence of acyclic embeddings into $\CC\times\CC^*$ is of interest for reasons discussed in Section~\ref{sec_introduction}.

On the question of which other open Riemann surfaces may be acyclically embedded into $\CC\times\CC^*$, it is immediate that $\CC^*$ embeds acyclically, with the punctured disc the only case remaining to be considered. However, if we map the punctured closed unit disc $\overline \Delta_1 \setminus \{0\}$ into $\CC\times\CC^*$ by $z \mapsto (0,z)$ then this is a proper embedding under which every point in the boundary circle is exposed. We may therefore send the point $(0,1)$ to infinity by a rational shear as in Theorem~\ref{thm_ensureNiceProjectionProperty}, and then apply Theorem~\ref{thm_Wold} to obtain an acyclic embedding of the punctured open disc into $\CC\times\CC^*$ (note also that the embedding of $\overline \Delta_1 \setminus \{0\}$ given by $z \mapsto (1/z,2+z)$ similarly gives rise to a null-homotopic embedding of the punctured open disc into $\CC\times\CC^*$). We therefore have the following corollary:

\begin{corollary}
\label{cor_acyclicEmbedding}
Every open Riemann surface with non-trivial abelian fundamental group embeds acyclically into $\CC\times\CC^*$.
\end{corollary}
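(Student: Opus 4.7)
The plan is to reduce the statement to a short list of cases via classification, and then dispatch each case individually using the technology already developed. Every open Riemann surface is homotopy equivalent to a one-dimensional CW complex and therefore has a free fundamental group; a non-trivial free abelian group is necessarily isomorphic to $\ZZ$. By the uniformisation theorem (in the form pertaining to Riemann surfaces with cyclic fundamental group), it follows that the surfaces to consider are precisely the non-degenerate annuli, the punctured plane $\CC^*$, and the punctured disc $\Delta_1 \setminus \{0\}$.

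For a non-degenerate annulus $X$, I would apply Theorem~\ref{thm_strongOka} directly: $X$ is a one-connected circular domain, so any continuous map $X \to \CC\times\CC^*$ whose induced homomorphism sends a generator of $\pi_1(X) \cong \ZZ$ to a generator of $\pi_1(\CC\times\CC^*) \cong \ZZ$ (for instance, the restriction of $z \mapsto (0,z)$ to a suitable subannulus) is homotopic to a proper holomorphic embedding, and that embedding is therefore a homotopy equivalence. For $\CC^*$, the map $z \mapsto (0,z)$ is already a proper holomorphic embedding into $\CC\times\CC^*$ which sends the canonical generator of $\pi_1(\CC^*)$ to the canonical generator of $\pi_1(\CC\times\CC^*)$, and is thus acyclic on the nose.

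The remaining case, the punctured disc, is the delicate one, and here I would follow the strategy sketched in the paragraph preceding the corollary. Start with the proper holomorphic embedding $\iota : \overline{\Delta}_1 \setminus \{0\} \to \CC\times\CC^*$ given by $z \mapsto (0,z)$; every point of the boundary circle $\{|z|=1\}$ is $\iota$-exposed, since the complex line $\CC \times \{w\}$ meets the image transversally at a single point for each $w$ with $|w|=1$. Next compose with a small rational shear $g(z,w) = (z + \alpha/(w-1), w)$, choosing the argument of $\alpha \in \CC^*$ as in the proof of Theorem~\ref{thm_ensureNiceProjectionProperty} so that $g \circ \iota$ is a proper holomorphic embedding of $\overline{\Delta}_1 \setminus \{0,1\}$ into $\CC\times\CC^*$ whose single (now non-compact) boundary component has the nice projection property. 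Finally, apply Theorem~\ref{thm_Wold} to produce a proper holomorphic embedding $\sigma : \Delta_1 \setminus \{0\} \to \CC\times\CC^*$.

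The main obstacle, and essentially the only substantive point, is to verify that $\sigma$ is a homotopy equivalence rather than merely a proper holomorphic embedding. This is handled exactly by Lemma~\ref{lem_WoldEmbeddingHomotopyClass}, which guarantees that $\sigma$ is homotopic to the restriction of $g \circ \iota$ to $\Delta_1 \setminus \{0\}$. A direct computation of winding numbers then finishes the job: a small loop around $0$ is sent by $g \circ \iota$ to a loop whose second coordinate winds once around $0 \in \CC^*$, so the induced map on $\pi_1 \cong \ZZ$ is an isomorphism, and since both $\Delta_1 \setminus \{0\}$ and $\CC\times\CC^*$ are homotopy equivalent to $S^1$, this suffices to conclude that $\sigma$ is acyclic.
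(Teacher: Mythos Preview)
Your proposal is correct and follows essentially the same route as the paper: the same classification into non-degenerate annuli, $\CC^*$, and the punctured disc, with the same treatment of each case (Theorem~\ref{thm_strongOka} for annuli, the trivial inclusion for $\CC^*$, and the sequence $\iota$, rational shear, Theorem~\ref{thm_Wold} for the punctured disc). Your explicit invocation of Lemma~\ref{lem_WoldEmbeddingHomotopyClass} to verify acyclicity in the punctured-disc case is a welcome clarification of a step the paper leaves implicit.
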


It is well known that the fundamental group of every open Riemann surface is freely generated, so the only other open Riemann surfaces with abelian fundamental group are $\CC$ and the open disc. In the case of $\CC$, an acyclic embedding into $\CC^2$ exists trivially, while for the disc an embedding into $\CC^2$ exists by the results of Kasahara and Nishino \cite{Stehle:1972}, and this embedding is trivially acyclic. Since $\CC^2$ is an elliptic Stein manifold, we have:

\begin{corollary}
\label{cor_acyclicEmbedding2}
Every open Riemann surface with abelian fundamental group embeds acyclically into a 2-dimensional elliptic Stein manifold.
\end{corollary}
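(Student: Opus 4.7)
The plan is to split into cases according to the fundamental group of the Riemann surface $Y$ in question, appeal to Corollary~\ref{cor_acyclicEmbedding} in the non-trivial case, and use the classical Kasahara-Nishino embedding of the disc in $\CC^2$ in the simply connected case. The main input is the classical fact that the fundamental group of any open Riemann surface is free, so that ``abelian'' forces $\pi_1(Y)$ to be either trivial or isomorphic to $\ZZ$. Correspondingly, up to biholomorphism, $Y$ is one of: $\CC$, the open disc $\Delta_1$, $\CCx$, the punctured disc $\Delta_1 \setminus \{0\}$, or a proper (possibly degenerate) annulus.

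First I would dispose of the non-simply connected cases. If $\pi_1(Y) \cong \ZZ$, then $Y$ is $\CCx$, the punctured disc, or an annulus, so Corollary~\ref{cor_acyclicEmbedding} directly gives an acyclic embedding $Y \hookrightarrow \CC\times\CCx$. Since $\CC\times\CCx$ is an elliptic Stein manifold of dimension $2$ (ellipticity is witnessed, for instance, by the density property established by Varolin and used throughout the paper), this case is complete.

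Next I would handle the simply connected cases. If $Y = \CC$, take the trivial embedding $z \mapsto (z,0)$ into $\CC^2$; since both source and target are contractible, this is automatically a homotopy equivalence and hence acyclic, and $\CC^2$ is a $2$-dimensional elliptic Stein manifold. If $Y = \Delta_1$, invoke the Kasahara-Nishino theorem \cite{Stehle:1972} to obtain a proper holomorphic embedding $\Delta_1 \hookrightarrow \CC^2$. Again both spaces are contractible, so any such embedding is automatically acyclic.

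Combining the two cases yields the corollary. There is no real obstacle: the statement is essentially a bookkeeping consequence of Corollary~\ref{cor_acyclicEmbedding} together with the already-known embedding of the disc and the trivial embedding of $\CC$, once one recalls that $\pi_1$ of an open Riemann surface is free (hence abelian iff cyclic) and that both $\CC^2$ and $\CC\times\CCx$ are $2$-dimensional elliptic Stein manifolds.
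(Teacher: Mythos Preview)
Your proposal is correct and follows essentially the same route as the paper: split according to whether $\pi_1(Y)$ is trivial or $\ZZ$, invoke Corollary~\ref{cor_acyclicEmbedding} in the latter case, and in the former embed $\CC$ trivially and the disc via Kasahara--Nishino into $\CC^2$. The paper's justification is phrased in the paragraph immediately preceding the corollary and matches your argument line for line.
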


The results in this paper focus on circular domains without punctures, that is, proper open subsets of the Riemann sphere whose boundary components are all circles. If we attempt a systematic study of domains with punctures the situation becomes more complicated, and it is unclear whether a strong Oka principle holds for such domains. One difficulty is that the Wold embedding theorem does not yield properness at punctures in the domain, as it does near circular boundary components of the domain (after sending an exposed point in the boundary component to infinity). Instead, properness at the punctures needs to be built into the initial embedding $\psi : \overline X \to \CC\times\CC^*$, together with appropriate winding numbers about each puncture. I plan to further investigate embeddings of finitely connected circular domains with punctures in the near future.

\appendix
\section*{Appendix: The Anders\'en-Lempert theorem}
\label{sec_AndersenLempert}
\noindent As discussed in Section~\ref{sec_Wold}, Anders\'en and Lempert \cite{Andersen:1992} were the first to show that an injective holomorphic map $\Phi : \Omega \to \CC^n$ from a star-shaped domain $\Omega \subset \CC^n$, $n\ge 2$, onto a Runge domain $\Phi(\Omega)$ can be approximated uniformly on compact subsets of $\Omega$ by automorphisms of $\CC^n$. In \cite{Forstneric:1993,Forstneric:1994}, Forstneri\v c and Rosay generalised the argument of Anders\'en and Lempert, giving a theorem on the approximation of parametrised families of injective holomorphic maps by automorphisms. In the same papers Forstneri\v c and Rosay also proved a more general result on approximation by automorphisms in a neighbourhood of a polynomially convex compact set $K \subset \CC^n$. These results, together with stronger theorems by Forstneri\v c and L\o w \cite{Forstneric:1997}, and Forstneri\v c, L\o w and \O vrelid \cite{Forstneric:2001}, are referred to as \emph{Anders\'en-Lempert theorems} for $\CC^n$.

In this appendix we prove, in full detail, generalisations of the Anders\'en-Lempert theorems of Forstneri\v c and Rosay to Stein manifolds with the density property. Although the arguments follow closely those given in \cite{Forstneric:1993,Forstneric:1994}, with some modifications required for the more general setting, I believe these results have not appeared in the literature and so provide full details for the benefit of the reader. Recall that a smooth vector field on a smooth manifold $X$ is said to be \emph{$\RR$-complete} if its maximal domain equals $\RR\times X$ (Definition~\ref{def_completeVectorField}).

\begin{definition}
Let $X$ be a complex manifold and let $\mathfrak{X}(X)$ be the Lie algebra of holomorphic vector fields on $X$. We say $X$ has the \emph{density property} if the Lie algebra generated by the $\RR$-complete holomorphic vector fields on $X$ is dense in $\mathfrak{X}(X)$ in the compact-open topology.
\end{definition}


The following main result is a generalisation of Theorem 2.1 in \cite{Forstneric:1993}, and appears as Theorem~\ref{thm_AndersenLempertHolConvex} in Section~\ref{sec_Wold}. We recall the definition of a \emph{$\Cont^k$-isotopy of injective holomorphic maps} (Definition~\ref{def_isotopy}).

\begin{theorem}
\label{thm_AndersenLempertHolConvex_app}
Let $X$ be a Stein manifold with the density property. Let $\Omega \subset X$ be an open set and $\Phi_t : \Omega \to X$ be a $\Cont^1$-isotopy of injective holomorphic maps such that $\Phi_0$ is the inclusion of $\Omega$ into $X$. Suppose $K \subset \Omega$ is a compact set such that $K_t = \Phi_t(K)$ is $\mathscr{O}(X)$-convex for every $t \in [0,1]$. Then $\Phi_1$ can be uniformly approximated on $K$ by holomorphic automorphisms of $X$ with respect to any Riemannian distance function on $X$.
\end{theorem}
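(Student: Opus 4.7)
The plan is to adapt the Forstneri\v c--Rosay argument from \cite{Forstneric:1993, Forstneric:1994} to the setting of a Stein manifold $X$ with the density property, replacing polynomial convexity in $\CC^n$ with $\OO(X)$-convexity and replacing the specific Lie-algebraic structure of $\CC^n$ with the density property.

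First I would realise $\Phi_1$ as the time-one map of a non-autonomous holomorphic vector field. Since each $\Phi_t$ is an injective holomorphic map, the formula
\[
	V_t(\Phi_t(x)) = \left.\frac{\partial \Phi_s(x)}{\partial s}\right|_{s=t}\,, \qquad x \in \Omega\,,
\]
defines a holomorphic vector field $V_t$ on $\Phi_t(\Omega) \supset K_t$, depending continuously on $t$ in the compact-open topology, and $\Phi_t$ is the time-$t$ flow of $V_t$ starting from the inclusion. Partition $[0,1]$ into short subintervals $0 = t_0 < t_1 < \cdots < t_N = 1$; on each, the map $\Phi_{t_j}\circ\Phi_{t_{j-1}}^{-1}$ is close to the identity on a neighbourhood of $K_{t_{j-1}}$ and is generated by $V_t$ for $t$ in the subinterval.

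The core of the proof is to approximate each short-time map by a composition of flows of $\RR$-complete holomorphic vector fields, which are honest elements of $\Aut(X)$. This uses two ingredients in sequence. First, since $K_{t_{j-1}}$ is $\OO(X)$-convex and $X$ is Stein, the Oka--Weil theorem (applied to the coefficients of $V_{t_{j-1}}$ in a suitable trivialisation of $TX$ obtained from an embedding $X \hookrightarrow \CC^N$) approximates $V_{t_{j-1}}$ uniformly on $K_{t_{j-1}}$ by a globally defined holomorphic vector field $\widetilde V \in \mathfrak{X}(X)$. Second, the density property approximates $\widetilde V$ uniformly on $K_{t_{j-1}}$ by a finite $\RR$-linear combination of iterated Lie brackets of $\RR$-complete holomorphic vector fields $W_1,\dots,W_\ell$. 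Finally, a Lie--Trotter / commutator estimate expresses the time-$\Delta t$ flow of such a Lie combination as a limit of alternating compositions of short-time flows $\phi_s^{W_k}$, each of which is an automorphism of $X$. Composing the $N$ approximating automorphisms produces an element of $\Aut(X)$ that approximates $\Phi_1$ uniformly on $K$.

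The main obstacle is the inductive stability of this scheme. At each stage the composition so far must leave the image of $K$ inside a neighbourhood of $K_{t_j}$ on which the Oka--Weil step and the density-property approximation are valid; this requires the $\OO(X)$-convexity hypothesis on \emph{every} intermediate set $K_t$, not merely at the endpoints, and it forces the quality of the density-property approximation, the fineness of the partition, and the Lie--Trotter error to be balanced against one another. Once this bookkeeping is carried out exactly as in \cite{Forstneric:1993, Forstneric:1994}, the density property supplies the only nontrivial ingredient that was previously specific to $\CC^n$, and the theorem follows.
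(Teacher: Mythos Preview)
Your plan is essentially correct and follows the same Forstneri\v c--Rosay template as the paper: differentiate the isotopy to a time-dependent holomorphic vector field, freeze it on a fine partition, globalise each frozen field by holomorphic approximation, invoke the density property, and assemble automorphisms via Lie--Trotter/commutator formulas.

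The one structural point on which the paper is sharper than your sketch is the passage from the $\OO(X)$-convex hypothesis to the situation in which Oka--Weil can actually be applied along the flow. You propose to approximate $V_{t_{j-1}}$ uniformly on the compact $K_{t_{j-1}}$ itself; but the flow of any approximating field leaves $K_{t_{j-1}}$ immediately, so uniform closeness on $K_{t_{j-1}}$ alone does not control where nearby points go. What is really needed is approximation on an \emph{open} neighbourhood that is carried by $\Phi_t$ to Runge open sets for every $t$. The paper handles this by first proving a separate ``Runge'' version of the theorem (assuming $\Phi_t(\Omega)$ is Runge for all $t$) and then reducing the $\OO(X)$-convex statement to it: using a smooth plurisubharmonic exhaustion $\rho\ge 0$ vanishing exactly on $K$, one shows that for small $\epsilon$ the sublevel sets $U_\epsilon=\{\rho<\epsilon\}$ have $\Phi_t(U_\epsilon)$ Runge in $X$ for every $t\in[0,1]$, by patching $\rho\circ\Phi_t^{-1}$ with a global plurisubharmonic function via a cut-off. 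This is precisely the ``bookkeeping'' you defer to \cite{Forstneric:1993,Forstneric:1994}, but it is the one place where the $\OO(X)$-convexity of \emph{every} $K_t$ is genuinely used, and it is worth isolating rather than folding into the inductive error estimates.
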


We first prove the following slightly simpler result, which is a generalisation of Theorem 1.1 in \cite{Forstneric:1993}. Note that, for us, Runge sets are always taken to be Stein.

\begin{theorem}
\label{AndersenLempert}
Let $X$ be a Stein manifold with the density property. Let $\Omega \subset X$ be an open set and $\Phi_t : \Omega \to X$ be a $\Cont^1$-isotopy of injective holomorphic maps such that $\Phi_0$ is the inclusion of $\Omega$ into $X$. Suppose that the set $\Omega_t = \Phi_t(\Omega)$ is Runge in $X$ for every $t \in [0,1]$. Then $\Phi_1$ can be uniformly approximated on compact subsets of $\Omega$ by holomorphic automorphisms of $X$.
\end{theorem}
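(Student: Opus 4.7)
The approach is the classical Andersén--Lempert strategy, adapted from \cite{Forstneric:1993} to the Stein manifold setting by using the density property in place of explicit polynomial shear/overshear decompositions in $\CC^n$.

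First, I would reduce the problem to approximating maps close to the identity. Fix a compact set $L \subset \Omega$ and a desired tolerance. By continuity of the isotopy, subdivide $[0,1]$ into finitely many subintervals $0 = t_0 < t_1 < \dots < t_N = 1$ so small that each intermediate map $\Psi_i := \Phi_{t_i} \circ \Phi_{t_{i-1}}^{-1}$, defined on the open set $\Phi_{t_{i-1}}(\Omega) = \Omega_{t_{i-1}}$, is uniformly close to the inclusion on the compact set $\Phi_{t_{i-1}}(L_i)$, for a suitable nested sequence of compacts $L = L_0 \subset L_1 \subset \dots$ chosen a priori. If each $\Psi_i$ can be approximated, uniformly on the appropriate compact set, by an automorphism $\Theta_i \in \Aut(X)$, then the composition $\Theta_N \circ \cdots \circ \Theta_1$ will approximate $\Phi_1$ on $L$, provided the tolerances are chosen to control accumulated error.

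Next I would pass from each short piece of the isotopy to a time-dependent holomorphic vector field. Differentiating in $t$, the isotopy satisfies $\partial_t \Phi_t = V_t \circ \Phi_t$ for a $\Cont^0$-family of holomorphic vector fields $V_t$ on $\Omega_t$. On a short subinterval $[t_{i-1}, t_i]$, the map $\Psi_i$ is the time-$(t_i - t_{i-1})$ flow (modulo smaller-order terms) of the non-autonomous vector field $V_t$ on $\Omega_{t_{i-1}}$. Because $\Omega_{t_{i-1}}$ is Runge in $X$, and holomorphic vector fields on a Stein manifold are sections of a coherent sheaf to which Cartan's Theorem B and the Runge property apply, I can approximate $V_t$ uniformly on the relevant compact subset by a continuous family of global holomorphic vector fields $\widetilde V_t \in \mathfrak X(X)$.

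Now the density property of $X$ enters: each $\widetilde V_t$ can be approximated, uniformly on the compact in question, by a finite Lie combination of $\RR$-complete holomorphic vector fields on $X$. The key technical step, which I expect to be the main obstacle, is the \emph{Euler splitting} argument: the flow of such a Lie combination over a short time $\tau$ is approximated, to order $O(\tau^2)$ on compact sets, by a suitable product of the flows of the individual $\RR$-complete constituents evaluated at times of order $\sqrt\tau$ (using the standard commutator identity $[U,W] \approx \tau^{-1}(\phi^U_{\sqrt\tau}\phi^W_{\sqrt\tau}\phi^U_{-\sqrt\tau}\phi^W_{-\sqrt\tau} - \id)$, and its iterates for deeper brackets). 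Each such flow is a global automorphism of $X$ since the fields are $\RR$-complete, so the product is an element of $\Aut(X)$. The delicate part is to obtain quantitative uniform estimates on compact sets in the absence of global coordinates, but since only a finite number of $\RR$-complete fields appear at each stage and each has a well-defined global flow, standard ODE estimates in a fixed finite chart cover of the relevant compact sets suffice.

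Finally, assembling: for each subinterval produce an automorphism $\Theta_i$ approximating $\Psi_i$ on the chosen compact set with tolerance $\epsilon/2^i$, where the compacts $\Phi_{t_{i-1}}(L_i)$ are enlarged enough to absorb the perturbations caused by $\Theta_1, \dots, \Theta_{i-1}$. The composition $\Theta_N \circ \dots \circ \Theta_1 \in \Aut(X)$ then approximates $\Phi_1$ uniformly on $L$ within the prescribed tolerance. The Runge assumption on every $\Omega_t$ is used exactly once per subdivision step, to transfer the infinitesimal data $V_t$ from $\Omega_t$ to $X$; the density property is then used to realise this data, up to error, as a composition of genuine automorphisms.
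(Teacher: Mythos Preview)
Your proposal is correct and follows essentially the same approach as the paper: differentiate the isotopy to a time-dependent holomorphic vector field, subdivide the time interval, globalise the local fields via the Runge hypothesis, approximate by Lie combinations of complete fields via the density property, and realise the resulting flows as compositions of automorphisms through the Euler/commutator splitting. The paper makes the passage from the non-autonomous flow to compositions of frozen-time autonomous flows explicit via the algorithm framework of Abraham--Marsden (the product-formula Theorem~\ref{algorithm} and Corollary~\ref{cor_approxbyauto}), whereas you describe this step more informally, but the substance is the same.
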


We need a number of definitions and results in order to prove this theorem.

\begin{definition}
Let $V$ be a smooth vector field on a smooth manifold $X$ and $(t,x) \mapsto A_t(x)$ be a continuous map from an open set in $[0,\infty) \times X$ containing $\{0\}\times X$ to $X$ such that the $t$-derivative of $A_t$ exists and is continuous. We say that $A$ is an \emph{algorithm} for $V$ if for all $x \in X$ we have
\[
A_0(x) = x \quad \mathrm{and} \quad \frac{\partial}{\partial t}\bigg|_{t=0}A_t(x) = V(x)\,.
\]
\end{definition}

The proof of the following result can be found in \cite[Thm. 2.1.26]{Abraham:1978}.

\begin{theorem}
\label{algorithm}
Let $V$ be a smooth vector field with flow $\phi_t$ on a smooth manifold $X$. Let $\Omega \subset \RR \times X$ be the maximal domain of $V$ and let $\Omega_+ = \Omega \cap ([0,\infty) \times X)$. If $A$ is an algorithm for $V$ then for all $(t,x) \in \Omega_+$ the $n$-th iterate $A^n_{t/n}(x)$ of the map $A_{t/n}$ is defined for sufficiently large $n \in \NN$ (depending on $x$ and $t$), and we have
\[
\lim_{n\to\infty} A^n_{t/n}(x) = \phi_t(x)\,.
\]
The convergence is uniform on compact sets in $\Omega_+$.
\end{theorem}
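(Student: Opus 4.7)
The plan is to reduce the theorem to a Gr\"onwall-type telescoping estimate comparing one step of the algorithm $A$ with one step of the flow $\phi$, and then controlling the accumulated error. Fix a compact set $K \subset \Omega_+$. Since $\Omega$ is open and the flow is continuous on $\Omega$, the trajectory tube $T = \{(s,\phi_s(x)) : (t,x) \in K,\ 0 \le s \le t\}$ is a compact subset of $\Omega$. Choose a relatively compact open set $U \subset X$ containing the projection of $T$ to $X$ and set $K'' = \overline{U}$; a fixed Riemannian metric on $X$ (or a finite chart cover of $K''$) lets us measure errors in a Euclidean-type norm. Compactness of $K''$ together with the hypothesis that $A$ is defined on a neighbourhood of $\{0\}\times X$ yields a uniform $s_0 > 0$ such that $A_s(y)$ is defined whenever $y \in K''$ and $s \in [0, s_0]$.

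Two analytic ingredients drive the argument. First, a uniform one-step comparison: since $A_s$ and $\phi_s$ are both $\Cont^1$ in $s$ with value $\id$ and $s$-derivative $V$ at $s = 0$, the remainder $R(s,y) = A_s(y) - \phi_s(y)$ satisfies $R(0,\cdot) \equiv 0$ and $\partial_s R(0,\cdot) \equiv 0$. Uniform continuity of $\partial_s R$ on $[0,s_0]\times K''$ followed by integration in $s$ gives
\[
\sup_{y \in K''}\lvert R(s,y)\rvert \le s\,\eta(s),\qquad \text{where}\ \eta(s) \to 0\ \text{as}\ s \to 0^+.
\]
Second, a local Lipschitz estimate: integrating the variational equation shows that, for some constant $C$ bounding $\lVert DV\rVert$ on a slight enlargement of $K''$, the restriction $\phi_s\vert_{K''}$ is Lipschitz with constant at most $1 + Cs$ for small $s$.

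Fix $(t,x) \in K$ and set $e^n_k = A^k_{t/n}(x) - \phi_{k t/n}(x)$. Writing
\[
e^n_{k+1} = \bigl(A_{t/n} - \phi_{t/n}\bigr)\bigl(A^k_{t/n}(x)\bigr) + \phi_{t/n}\bigl(A^k_{t/n}(x)\bigr) - \phi_{t/n}\bigl(\phi_{k t/n}(x)\bigr)
\]
and applying the two estimates produces the recursion
\[
\lvert e^n_{k+1}\rvert \le (1 + Ct/n)\,\lvert e^n_k\rvert + (t/n)\,\eta(t/n),
\]
which, iterated from $e^n_0 = 0$ and summed as a geometric series, yields
\[
\lvert e^n_n \rvert \le \frac{\eta(t/n)\bigl(e^{Ct} - 1\bigr)}{C} \longrightarrow 0
\]
as $n \to \infty$, uniformly in $(t,x) \in K$.

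The main obstacle is not the analytic estimate but the inductive bookkeeping required to guarantee that every iterate $A^k_{t/n}(x)$, for $0 \le k \le n$, actually lies inside $K''$, so that the estimates above are applicable at each stage of the recursion. This is handled by a simultaneous induction: once $n$ is large enough that both $t/n < s_0$ and the bound just derived for $\lvert e^n_k\rvert$ is smaller than the distance from the projection of $T$ to $\partial U$, each iterate stays within $K''$ because $\phi_{k t/n}(x)$ lies in that projection; compactness of $K$ makes the threshold on $n$ uniform in $(t,x)$. This book-keeping, though entirely standard, is the reason it is convenient to delegate the proof to an ODE reference such as \cite{Abraham:1978}.
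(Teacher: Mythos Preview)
Your argument is correct and is essentially the standard proof of this Lie--Trotter type convergence result. Note, however, that the paper does not actually prove Theorem~\ref{algorithm}: it simply cites \cite[Thm.~2.1.26]{Abraham:1978} and moves on. Your Gr\"onwall-style telescoping estimate, together with the bootstrap to keep the iterates inside the compact $K''$, is precisely the argument one finds in such references, so in substance you have supplied what the paper outsources.

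One small comment on presentation: expressions like $A_s(y)-\phi_s(y)$ and $\lvert e^n_k\rvert$ only make literal sense in a chart, and you acknowledge this parenthetically. If you want the write-up to be self-contained, it is cleanest to pass to an embedding $X\hookrightarrow\RR^N$ (as the paper itself does in the proof of Lemma~\ref{lem_flowapprox}) or to fix a finite atlas over $K''$ and run the recursion chart-by-chart; either way the constants $C$ and the modulus $\eta$ remain uniform on $K''$, and the rest of your argument goes through verbatim.
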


Applying Theorem~\ref{algorithm} to appropriate choices of algorithms we obtain the following proposition (see \cite[Cor. 2.1.27]{Abraham:1978} and \cite[Prop. 4.2.34]{Abraham:1988}).

\begin{proposition}
Let $V$ and $W$ be smooth vector fields with flows $\phi_t$ and $\psi_t$. Then
\begin{enumerate}[{\normalfont (i)}]
\item $\phi_t \circ \psi_t$ is an algorithm for $V + W$.
\item $\psi_{-\sqrt{t}}\circ\phi_{-\sqrt{t}}\circ\psi_{\sqrt{t}}\circ\phi_{\sqrt{t}}$ is an algorithm for $[V,W]$.
\end{enumerate}
\end{proposition}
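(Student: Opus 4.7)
The plan is to verify both identities by a direct local Taylor expansion of the flows. For a smooth vector field $V$ with flow $\phi_t$, working in a chart near a fixed point $x$, the standard expansion
$\phi_t(x) = x + tV(x) + \tfrac{t^2}{2}(DV \cdot V)(x) + O(t^3)$
holds smoothly in $(t,x)$ on a neighbourhood of $\{0\} \times X$, and analogously for $\psi_t$ with $W$. Since being an algorithm is a purely pointwise requirement on $A_0$ and on $\partial_t A_t\big|_{t=0}$, the computation can be carried out locally.

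For (i), I would substitute $\psi_t(x) = x + tW(x) + O(t^2)$ into $\phi_t$ and expand:
$\phi_t(\psi_t(x)) = \psi_t(x) + tV(\psi_t(x)) + O(t^2) = x + tV(x) + tW(x) + O(t^2)$,
whence $A_0 = \id$ and $\partial_t A_t\big|_{t=0} = V + W$.

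For (ii), I would reparametrise $s = \sqrt t$ and expand each of the four flows to order $s^2$. Applying the flows in the order they act, namely first $\phi_s$, then $\psi_s$, then $\phi_{-s}$, then $\psi_{-s}$, and at each stage re-expanding the relevant vector field at the previously computed image point to first order in $s$ (sufficient because it will be multiplied by $s$), the four linear-in-$s$ contributions $\pm sV$ and $\pm sW$ cancel. A careful tally of the quadratic contributions shows that those involving $DV \cdot V$ cancel amongst themselves, as do those involving $DW \cdot W$, leaving only the mixed terms, so
$A_t(x) = x + s^2\bigl(DW(x)\,V(x) - DV(x)\,W(x)\bigr) + O(s^3) = x + t\,[V,W](x) + O(t^{3/2})$,
since in local coordinates $[V,W] = DW \cdot V - DV \cdot W$. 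One reads off $A_0 = \id$ and $\partial_t A_t\big|_{t=0} = [V,W]$.

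The main obstacle is the bookkeeping in (ii): nothing quadratic may be dropped prematurely, since the required cancellations are between pairs of such terms, and one needs the full second-order Taylor coefficient $\tfrac{1}{2} DV \cdot V$ of the flow (not merely the first-order term $V$) in order to see them. A secondary technicality is verifying the algorithm conditions at $t = 0$ despite the non-smoothness of $t \mapsto \sqrt t$ at the origin; this is harmless because the remainder is $O(s^3) = O(t^{3/2})$, so $\partial_t A_t$ extends continuously to $t = 0$ with value $[V,W]$. Part (i) is essentially one application of the chain rule; all of the real labour lies in the explicit second-order expansion needed for part (ii).
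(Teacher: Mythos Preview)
Your proposal is correct and is precisely the standard computation; the paper itself does not give a proof of this proposition but merely cites \cite[Cor.~2.1.27]{Abraham:1978} and \cite[Prop.~4.2.34]{Abraham:1988}, where exactly this Taylor-expansion argument is carried out. Your handling of the potential issue at $t=0$ in part (ii)---noting that the remainder is smooth in $s=\sqrt t$ and $O(s^3)$, so that $\partial_t A_t$ extends continuously---is the right way to dispatch that point.
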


By repeated application of the preceding two results, we have:

\begin{corollary}
\label{cor_approxbyauto}
Let $V_1, \dots, V_m$ be $\RR$-complete holomorphic vector fields on a complex manifold $X$. Let $V$ be a holomorphic vector field on $X$ that is in the Lie algebra generated by $V_1, \dots, V_m$. Assume that $K\subset X$ is a compact set and $t_0 > 0$ is such that the flow $\phi_t(x)$ of $V$ exists for all $x \in K$ and for all $t \in [0, t_0]$. Then $\phi_{t_0}$ is a uniform limit on $K$ of a sequence of compositions of forward-time maps of the vector fields $V_1,\dots,V_m$. In particular, since the forward-time maps of $V_1,\dots,V_m$ are automorphisms of $X$, $\phi_{t_0}$ can be uniformly approximated on $K$ by automorphisms of $X$.
\end{corollary}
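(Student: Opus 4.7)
The plan is to proceed by structural induction on the expression of $V$ as an iterated Lie polynomial in $V_1,\dots,V_m$. Since $V$ lies in the Lie algebra generated by these fields, it admits a finite such expression built from $\RR$-linear combinations and iterated brackets. I will show inductively that the time-$t_0$ flow $\phi_{t_0}^V$ is a uniform limit on $K$ of compositions of real-time flows of $V_1,\dots,V_m$; each such flow is an automorphism of $X$ since the $V_i$ are $\RR$-complete, so the final assertion then follows at once.

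The base case, when $V = V_i$ or $V = cV_i$ with $c\in\RR$, is immediate: the flow of $cV_i$ at time $t$ equals the flow of $V_i$ at time $ct$, which is already an automorphism. For the inductive step, suppose $V = W_1 + W_2$ (respectively $V = [W_1, W_2]$) with $W_1, W_2$ in the Lie algebra and the inductive hypothesis in place. By the proposition preceding the corollary, an algorithm $A_t$ for $V$ is furnished by $\phi_t^{W_1}\!\circ\phi_t^{W_2}$ (respectively by $\phi^{W_2}_{-\sqrt{t}}\!\circ\phi^{W_1}_{-\sqrt{t}}\!\circ\phi^{W_2}_{\sqrt{t}}\!\circ\phi^{W_1}_{\sqrt{t}}$). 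Since the hypothesis $[0,t_0]\times K \subset \Omega_+$ (the maximal domain of $V$) is in force, Theorem~\ref{algorithm} yields $\phi_{t_0}^V(x) = \lim_{n\to\infty} A^n_{t_0/n}(x)$ uniformly on $K$.

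The difficulty is that $A_t$ is assembled from the genuine flows of $W_1, W_2$, which in general are not themselves flows of the $V_i$. To bridge this gap I would run a diagonal argument: fix a compact neighbourhood $L$ of the orbit $\bigcup_{t\in[0,t_0]}\phi_t^V(K)$; by the inductive hypothesis applied to $W_1$ and $W_2$ on a slightly larger compact set, each flow $\phi^{W_j}_s$ appearing in $A^n_{t_0/n}$ can, for $s$ ranging over the finitely many times $\pm t_0/n$ or $\pm\sqrt{t_0/n}$, be replaced by an automorphism $\Psi^{s,n}_j$ that approximates it on $L$ to within $\varepsilon_n$. Choosing $\varepsilon_n\to 0$ fast enough, and using equicontinuity of the finitely-many-fold composition on $L$, the resulting automorphism (a composition of genuine flows of $V_1,\dots,V_m$ at real times) still converges to $\phi_{t_0}^V$ uniformly on $K$.

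The main obstacle is this bookkeeping of nested approximations: at each of the finitely many levels of the Lie expression we must choose accuracies small enough that their cumulative effect over $4n$-fold compositions (in the bracket case) vanishes as $n\to\infty$. Because the tree of Lie operations has fixed finite depth, only finitely many levels of uniform continuity estimates are required, so a single cascade of choices of $\varepsilon_n$ suffices. Finally, regarding the phrase \emph{forward-time maps}: the bracket algorithm involves negative-time flows of the $V_i$, which are automorphisms by $\RR$-completeness; equivalently, one may enlarge the list by adjoining each $-V_i$, after which only forward-time flows are used. Either reading delivers the concluding \emph{in particular} statement that $\phi_{t_0}^V$ is approximable on $K$ by holomorphic automorphisms of $X$.
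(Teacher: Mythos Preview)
Your proposal is correct and follows the same route the paper indicates: the paper's entire proof is the single clause ``By repeated application of the preceding two results,'' and your structural induction on the Lie expression, invoking the algorithm theorem at each node and replacing the intermediate flows by inductively supplied automorphisms via a diagonal choice of $\varepsilon_n$, is precisely what that repeated application amounts to when written out. Your remark on negative-time flows (or equivalently adjoining the $-V_i$) is also the right way to reconcile the bracket algorithm with the phrase ``forward-time maps.''
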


In the proof of Theorem~\ref{AndersenLempert} we will approximate a vector field $V$ uniformly on a given set by another vector field $W$, and we will need to know that the flows of nearby points $x$ and $y$ along $V$ and $W$ respectively can be made to remain close for finite time. This is stated precisely in the following lemma. We sketch a proof for the convenience of the reader.

\begin{lemma}
\label{lem_flowapprox}
Let $X$ be a Riemannian manifold, $\Omega \subset X$ be a relatively compact open subset, and $V$ be a smooth vector field on $\Omega$ with flow $\phi_t$. Let $K \subset \Omega$ be a compact set and let $t_0 > 0$ be such that for all $x \in K$ the flow $\phi_t(x)$ of $x$ along $V$ is defined for all $t \in [0,t_0]$. Then there exists $\eta > 0$ such that if $W$ is a smooth vector field on $\Omega$ with flow $\psi_t$ satisfying $\lVert V - W \rVert_{L^\infty(\Omega)} < \eta$, the flow $\psi_t(x)$ of $x$ along $W$ is defined for all $t \in [0,t_0]$, for all $x \in K$.

Furthermore, given $\epsilon > 0$, and after possibly shrinking $\eta$, there exists $\delta > 0$ such that for all $x,y \in K$ with $d(x,y) < \delta$, we have
\[
d(\phi_{t_0}(x),\psi_{t_0}(y))<\epsilon\,.
\]
\end{lemma}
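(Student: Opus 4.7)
The plan is to prove both statements through a Gronwall-type comparison estimate, after first confining the dynamics to a compact region inside $\Omega$. Let $K' = \phi([0,t_0] \times K)$, which is compact by continuity of $\phi$ and is contained in $\Omega$ by hypothesis. I would fix a relatively compact open neighbourhood $U$ of $K'$ with $\overline{U} \subset \Omega$, and set $\rho = \dist(K', X \setminus U) > 0$. Since $V$ is smooth and $\overline{U}$ is compact, $V$ is Lipschitz on $\overline{U}$ with respect to $d$ with some constant $L > 0$.

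For the first claim, fix $x \in K$ and let $T(x) \in (0, t_0]$ be the supremum of times for which $\psi_s(x) \in \overline{U}$ for all $s \in [0, T(x))$; standard ODE theory guarantees local existence of $\psi$, so $T(x)$ is well defined. For $t \in [0, T(x))$, a Gronwall comparison between $\phi_t(x)$ and $\psi_t(x)$, using that $V$ is $L$-Lipschitz on $\overline{U}$ and that $\lVert V - W \rVert_{L^\infty(\Omega)} < \eta$, yields
\[
d(\phi_t(x), \psi_t(x)) \leq \eta \, \frac{e^{Lt_0} - 1}{L}.
\]
If $\eta$ is chosen so that the right-hand side is strictly less than $\rho$, then $\psi_t(x)$ remains within distance $\rho$ of $\phi_t(x) \in K'$, hence stays in the interior of $\overline U \subset \Omega$. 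A standard bootstrap then forces $T(x) = t_0$, so $\psi_t(x)$ is defined throughout $[0, t_0]$.

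For the second claim, the triangle inequality gives
\[
d(\phi_{t_0}(x), \psi_{t_0}(y)) \leq d(\phi_{t_0}(x), \phi_{t_0}(y)) + d(\phi_{t_0}(y), \psi_{t_0}(y)).
\]
The second term is bounded by $\eta(e^{Lt_0}-1)/L$ by the previous step, and can be forced below $\epsilon/2$ by further shrinking $\eta$. The first term is controlled by continuous dependence of the flow of $V$ on initial data: a second Gronwall estimate, this time for two trajectories of the same vector field $V$, gives $d(\phi_{t_0}(x), \phi_{t_0}(y)) \leq d(x,y)\, e^{Lt_0}$, so any $\delta < (\epsilon/2)\, e^{-Lt_0}$ suffices.

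The main obstacle is rigorously justifying the Gronwall differential inequality in the Riemannian setting, where one cannot simply subtract position vectors. I would handle this by covering the compact set $\overline{U}$ by finitely many coordinate charts in which the Riemannian distance $d$ is bi-Lipschitz equivalent to the Euclidean distance in the chart, absorbing the bi-Lipschitz constants into a possibly larger value of $L$, and running the standard Euclidean Gronwall argument on short sub-intervals of $[0, t_0]$ inside each chart before concatenating. Alternatively, one may smoothly embed a neighbourhood of $\overline{U}$ into some $\RR^N$, extend $V$ and $W$ to the ambient space via a tubular neighbourhood retraction, and apply the Euclidean estimate directly, using that the intrinsic and extrinsic distances are comparable on the image of $\overline{U}$.
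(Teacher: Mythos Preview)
Your proposal is correct and matches the paper's approach: the paper also reduces to a standard Gr\"onwall argument in $\RR^N$ and handles the Riemannian case by Whitney-embedding $X$ into $\RR^N$, extending the vector fields via a tubular neighbourhood, and invoking the Euclidean estimate together with the relative compactness of $\Omega$. Your write-up is in fact more detailed than the paper's sketch, which omits the explicit Gr\"onwall bounds and the bootstrap argument for the existence interval.
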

\begin{proof}[Proof (sketch)]
In the case that $X = \RR^N$, the lemma is true by a standard argument involving Gr\"onwall's inequality. To see that the result holds for an arbitrary Riemannian manifold $X$, first use Whitney's embedding theorem to obtain a proper embedding $\sigma: X \to \RR^N$ for some $N$. Using a tubular neighbourhood of $\sigma(X)$ we may extend the smooth vector fields $\sigma_*(V)$ and $\sigma_*(W)$ on $\sigma(\Omega)$ to a relatively compact open set in $\RR^n$ that contains $\sigma(K)$. Applying the lemma for $\RR^N$ and using the relative compactness of $\Omega$ then proves the result.
\end{proof}

We can now prove Theorem~\ref{AndersenLempert}.

\begin{proof}[Proof of Theorem~\ref{AndersenLempert}]
Let $\widetilde{\Omega}$ denote the trace of the isotopy $\Phi_t$:
\[
\widetilde{\Omega} = \{(t,x) : t \in [0,1], x \in \Omega_t\} \subset \RR \times X\,.
\]
By differentiating $\Phi_t$ with respect to $t$ we obtain the following time-dependent vector field, whose continuity follows by a result of Dixon and Esterle \cite[Thm. 5.2]{Dixon:1986}:
\[
V(t,x) = \dot\Phi_t(\Phi_t^{-1}(x))\,,\quad t \in [0,1]\,,\ x \in \Omega_t\,.
\]
For fixed $t \in [0,1]$, $V_t = V(t,\cdot)$ is a holomorphic vector field defined on $\Omega_t \subset X$. We may also define an autonomous vector field $\widetilde{V}$ on $\widetilde{\Omega} \subset \RR \times X$ corresponding to $V$ by
\[
\widetilde{V}(t,x) = (1, V(t,x))\in T\RR\times TX\,,\quad (t,x) \in \widetilde{\Omega}\,,
\]
with flow $\widetilde{\Phi}_s(t,x) = (t + s, \Phi_{t+s}(\Phi_t^{-1}(x)))$. By taking $t = 0, s = 1$ we see that $\widetilde{\Phi}_1(0,x) = (1, \Phi_1(x))$, so that we may approximate the map $\Phi_1$ by approximating the $s = 1$ forward-time map of $\{0\} \times \Omega$ along $\widetilde{V}$. We begin by defining an algorithm for $\widetilde{V}$.

Given $(t,x) \in \widetilde{\Omega}$, let $\widetilde{A}_s(t,x) = (t+s, \Psi_s^t(x))$, where $\Psi^t$ is the flow of the autonomous holomorphic vector field $V_t = V(t,\cdot)$  defined on $\Omega_t \subset X$. Thus $\Psi^t_s(x)$ is the flow of $x \in \Omega_t$ along the holomorphic vector field $V_t$ for time $s$, and the domain $U$ of $\widetilde{A}_s$ is then an open subset of $[0,\infty) \times \widetilde{\Omega}$ containing $\{0\} \times \widetilde{\Omega}$. We have $\frac{\partial}{\partial s}\big|_{s=0} \widetilde{A}_s(t,x) = (1,V(t,x)) = \widetilde{V}(t,x)$, and $\widetilde{A}_s$ and its $s$-derivative are continuous, making $\widetilde{A}_s$ an algorithm for $\widetilde{V}$. Let $A_s(t,x) : U \to X$ be defined by $A_s(t,x) = \Psi_s^t(x)$, the projection of $\widetilde{A}$ onto the $X$-component.

Applying Theorem~\ref{algorithm} to $\widetilde{A}$ for the compact set $\{0\}\times\widetilde{K}$, where $\widetilde{K}\subset\Omega$ is a slightly larger compact set such that $K \subset \widetilde{K}^\circ$, and then projecting onto the $X$-component we obtain $n \in \NN$ such that, for all $x \in \widetilde{K}$,
\[
d(\Phi_1(x),A^n_{1/n}(0,x)) < \epsilon/2\,,
\]
where
\[
A^n_{1/n} = \Psi^{1-1/n}_{1/n}\circ \Psi^{1-2/n}_{1/n} \circ \cdots \circ \Psi^{1/n}_{1/n} \circ \Psi^0_{1/n}
\]
is the composition of the time $1/n$ flows of the autonomous holomorphic vector fields $V_0, V_{1/n}, \dots, V_{1-1/n}$, defined on $\Omega_0, \Omega_{1/n}, \dots, \Omega_{1-1/n}$ respectively. At this point in the proof we simplify our notation by rescaling the time coordinate by a factor of $n$, so that $t \in [0, n]$. We are thus now interested in approximating the map $\Phi_n$. After the rescaling, we have 
\begin{equation}
\label{eqn_approxPhi1}
d(\Phi_n(x),A^n_{1}(0,x)) < \epsilon/2\,,\  x \in \widetilde{K}\,,
\end{equation}
where
\[
A^n_{1} = \Psi^{n-1}_{1}\circ \Psi^{n-2}_{1} \circ \cdots \circ \Psi^{1}_{1} \circ \Psi^0_{1}
\]
is the composition of the $t=1$ flows of the autonomous vector fields $V_0, \dots, V_{n-1}$, defined on $\Omega_0, \Omega_{1}, \dots, \Omega_{n-1}$ respectively.

If we now set
\[
\widetilde{K}_0 = \widetilde{K}, \widetilde{K}_{1} = \Psi^0_{1}(\widetilde{K}_0), \dots, \widetilde{K}_{n-1} = \Psi^{n-2}_{1}(\widetilde{K}_{n-2})\,,
\]
and similarly define $K_0, K_{1}, \dots, K_{n-1}$, then $K_{j}$ and $\widetilde{K}_{j}$ are compact subsets of $\Omega_{j}$ such that $K_{j} \subset \widetilde{K}_{j}^\circ$, for all $j = 0, \dots, n-1$. Note that we have subdivided the interval $[0,n]$ into $n$ subintervals $[0,1],\dots,[n-1,n]$, each of length $1$. We proceed by examining the final subinterval $[n-1, n]$.

On the final subinterval $[n-1, n]$ we approximate the flow of the autonomous holomorphic vector field $V_{n-1}$, defined on $\Omega_{n-1}$. We first choose an open set $Z_{n-1} \subset \subset \Omega_{n-1}$ that contains the flows of all points in $\widetilde{K}_{n-1}$ along $V_{n-1}$ up to time $1$. Using Lemma~\ref{lem_flowapprox} we choose $\eta_{n-1}, \delta_{n-1} > 0$ sufficiently small so that if $W_{n-1} \in \mathfrak{X}(\Omega_{n-1})$ satisfies $\lVert V_{n-1} - W_{n-1} \rVert_{L^\infty(Z_{n-1})} < \eta_{n-1}$ and if $d(x,y) < \delta_{n-1}$, $x,y \in \widetilde{K}_{n-1}$, then $d(\Psi^{n-1}_{1}(x), \theta^{n-1}_{1}(y)) < \epsilon/4$, where $\theta^{n-1}_s$ is the flow along $W_{n-1}$ for time $s$. If necessary, we shrink $\delta_{n-1}$ further to ensure that the open $\delta_{n-1}$-neighbourhood of $K_{n-1}$ is contained in $\widetilde{K}_{n-1}$.

Now look at the previous subinterval $[n-2,n-1]$. Again we choose $Z_{n-2} \subset \subset \Omega_{n-2}$ containing the flows of all points in $\widetilde{K}_{n-2}$ along $V_{n-2}$ up to time $1$. We choose $\eta_{n-2}, \delta_{n-2} > 0$ so that if $W_{n-2} \in \mathfrak{X}(\Omega_{n-2})$ satisfies $\lVert V_{n-2} - W_{n-2} \rVert_{L^\infty(Z_{n-2})} < \eta_{n-2}$ and if $d(x,y) < \delta_{n-2}$, $x,y \in \widetilde{K}_{n-2}$, then $d(\Psi^{n-2}_{1}(x), \theta^{n-2}_{1}(y)) < \delta_{n-1} / 2$. As before, shrink $\delta_{n-2}$ if necessary to ensure that the open $\delta_{n-2}$-neighbourhood of $K_{n-2}$ is contained in $\widetilde{K}_{n-2}$.

Continuing in this manner we obtain open sets $Z_{j} \subset \subset \Omega_{j}$, and $\eta_{j}, \delta_{j} > 0$ such that conditions as in the previous paragraph hold on each subinterval $[j,j+1]$, $j = 0,\dots,n-1$. Note that on the subinterval $[0, 1]$ we only require that $Z_0 \subset \subset \Omega_0$ contain the flows under $V_{0}$ of points in $K$, and we may also assume that $x = y$.

At each time step $j = 0, \dots, n-1$, we now have the autonomous vector field $V_{j} \in \mathfrak{X}(\Omega_{j})$ together with the open set $Z_{j}\subset\subset\Omega_{j}$ and the number $\eta_{j} > 0$. Since each $\Omega_{j}$ is Runge in $X$ (and recall therefore also Stein) we may approximate $V_{j}$ uniformly on $Z_{j}$ by a global holomorphic vector field $H_j \in \mathfrak{X}(X)$ so that
\[
\lVert V_{j} - H_j \rVert_{L^\infty(Z_{j})} < \eta_{j}/2\,.
\]

Using the fact that $X$ has the density property we can then approximate $H_j$ uniformly on $Z_{j}$ to accuracy $\eta_{j}/2$ by a vector field $W_j \in \mathfrak{X}(X)$ that is in the Lie algebra generated by the complete holomorphic vector fields on $X$. We thus obtain the uniform approximations
\[
\lVert V_{j} - W_j \rVert_{L^\infty(Z_{j})} < \eta_{j},\quad j = 0,\dots,n-1,
\]
that ensure for $x, y \in \widetilde{K}_{j}$, $d(x,y) < \delta_{j}$, we have $d( \Psi^{j}_{1}(x), \theta^j_{1}(y)) < \delta_{j+1}/2$ for $j < n - 1$, and $d( \Psi^{n-1}_{1}(x), \theta^{n-1}_{1}(y)) < \epsilon/4$, where $\theta^j_s$ is the flow along $W_j$ for time $s$. We now apply Corollary~\ref{cor_approxbyauto} to uniformly approximate each $\theta^j_{1}$ on $\widetilde{K}_{j}$ to accuracy $\delta_{j+1}/2$ for $j < n-1$, and to accuracy $\epsilon/4$ when $j = n-1$, by automorphisms $\alpha_{j}$ of $X$. This gives, for $x,y \in \widetilde{K}_{j}$ such that $d(x,y) < \delta_{j}$,
\begin{equation}
\label{eqn_approxPsiGeneral}
d(\Psi^{j}_{1}(x), \alpha_{j}(y)) < \delta_{j+1}\,, \quad j \neq n-1\,,
\end{equation}
and
\begin{equation}
\label{eqn_approxPsiFinal}
d(\Psi^{n-1}_{1}(x), \alpha_{n-1}(y)) < \epsilon/2\,.
\end{equation}

Let $\alpha = \alpha_{n-1} \circ \cdots \circ \alpha_{1} \circ \alpha_0$. Clearly $\alpha$ is an automorphism of $X$. We now show that $\alpha$ approximates $\Phi_n$ uniformly to accuracy $\epsilon$ on $K \subset \Omega$. Let $x \in K$. Then by (\ref{eqn_approxPhi1}) we have $d(\Phi_n(x), A^n_{1}(0,x)) < \epsilon /2$. Letting $x_{1} = \Psi^{0}_{1}(x)$, we have by (\ref{eqn_approxPsiGeneral})
\[
d(x_{1}, \alpha_0(x)) < \delta_{1}\,,
\]
and since $x_{1} \in K_{1}$, we have that $\alpha_0(x) \in \widetilde{K}_{1}$ by our earlier choice of $\delta_1$. Setting $x_{2} = \Psi^{1}_{1}(x_{1})$ and again applying (\ref{eqn_approxPsiGeneral}) we obtain
\[
d(x_{2}, \alpha_{1}\circ\alpha_{0}(x)) < \delta_{2}\,,
\]
and since $x_{2} \in K_{2}$, we have that $\alpha_{1}\circ\alpha_0(x) \in \widetilde{K}_{2}$. Continuing in this way, using (\ref{eqn_approxPsiFinal}) in the final step, we ultimately obtain
\begin{equation}
\label{eqn_approxByAlpha}
d(x_n, \alpha(x)) < \epsilon /2\,,
\end{equation}
where $x_{j} = \Psi^{j-1}_{1}(x_{j-1})$ for each $j = 1,\dots,n$.
Since $x_n = A^n_{1}(0,x)$, combining (\ref{eqn_approxPhi1}) and (\ref{eqn_approxByAlpha}) shows that
\[
d(\Phi_n(x), \alpha(x)) < \epsilon
\]
as required.
\end{proof}

Using this result we can now prove Theorem~\ref{thm_AndersenLempertHolConvex_app}.

\begin{proof}[Proof of Theorem~\ref{thm_AndersenLempertHolConvex_app}]
We will show that $K \subset \Omega$ has a basis of open neighbourhoods $U$ in $\Omega$ such that $\Phi_t(U)$ is Runge in $X$ for each $t \in [0,1]$. Applying Theorem~\ref{AndersenLempert} to one such $U$ then immediately yields the desired conclusion.

Since $K \subset \Omega$ is an $\OO(X)$-convex compact set, there exists a smooth plurisubharmonic exhaustion function $\rho \ge 0$ on $X$ that is strictly plurisubharmonic on $X \setminus K$ and vanishes precisely on $K$ \cite[Thm. 1.3.8]{Stout:2007}. Then each sublevel set $U_\epsilon = \{x \in X : \rho(x) < \epsilon\}$ is Stein and Runge in $X$. We now show that for each $t \in [0,1]$, we may choose $\epsilon > 0$ sufficiently small so that $\Phi_t(U_\epsilon)$ is Runge.

Fix $t\in [0,1]$, and let $V \subset \subset \Omega_t$ be an open set such that $K_t \subset V$. Let $\rho_t = \rho \circ \Phi_t^{-1} \ge 0$. Since $\Phi_t^{-1}$ is holomorphic and injective on $\Omega_t$, $\rho_t$ is strictly plurisubharmonic on $\Omega_t\setminus K_t$ and vanishes precisely on $K_t$. As $K_t$ is $\OO(X)$-convex and compact, by \cite[Thm. 2.6.11]{Hormander:1990} there exists a smooth plurisubharmonic exhaustion function $\widetilde{\tau} \ge 0$ on $X$ that is both strictly positive and strictly plurisubharmonic on $X \setminus V$, and that vanishes on some closed set $V_1 \subset \subset V$ satisfying $K_t \subset V_1^\circ$.

Now let $\chi$ be a smooth cut-off function with values in $[0,1]$ and compact support in $\Omega_t$ that identically equals $1$ on $V$. If $\delta > 0$ is chosen sufficiently small then $\tau_t(x) = \widetilde{\tau}(x) + \delta \chi(x) \rho_t(x)$ is strictly plurisubharmonic on $X \setminus K_t$ and vanishes precisely on $K_t$. Indeed, outside $V$, $\tau_t$ is a small perturbation of $\widetilde{\tau}$ over a compact set and hence is strictly plurisubharmonic, while on $V$ we have $\tau_t = \widetilde{\tau} + \delta \rho_t$, which is strictly plurisubharmonic outside $K_t$. Note that for $x \in V_1$, $\tau_t(x) = \delta \rho_t(x)$.

For $\epsilon > 0$, every sublevel set $\{x \in X : \tau_t(x) < \epsilon\}$ is Runge in $X$. Let $\epsilon_0 > 0 $ be sufficiently small that $\Phi_t(U_{\epsilon_0}) \subset V_1$ and that $\widetilde{\tau}(x) > \delta \epsilon_0$ for $x \in X \setminus V$. Then for $\epsilon < \epsilon_0$,
\[
\Phi_t(U_\epsilon) = \{ x \in V_1 : \rho_t(x) < \epsilon\} = \{ x \in X : \tau_t(x) < \delta \epsilon \}\,,
\]
hence $\Phi_t(U_\epsilon)$ is Runge.

For $t' \in [0,1]$ we write $t' = t + \Delta t$, where $t$ is still fixed as above. Then for all sufficiently small $\lvert \Delta t\rvert$ we have that:
\begin{enumerate}
\item The same set $V$ chosen above for $t$ also satisfies $V \subset \subset \Omega_{t'}$ and $K_{t'}\subset V$.
\item The closed set $V_1$ (on which the  smooth plurisubharmonic function $\widetilde{\tau}$ chosen above vanishes) also satisfies $K_{t'}\subset V_1^\circ$.
\item The support of $\chi$ is compact in $\Omega_{t'}$, and thus $\tau_{t'}(x) = \widetilde{\tau}(x) + \delta \chi(x) \rho_{t'}(x)$ is also strictly plurisubharmonic on $X \setminus K_{t'}$ and vanishes precisely on $K_{t'}$.
\item $\Phi_{t'}(U_{\epsilon_0}) \subset V_1$ for the same $\epsilon_0$ as chosen above.
\end{enumerate}
Thus the same $\epsilon_0$ works for all $t'$ in an open neighbourhood of $t$, and by compactness of $[0,1]$ we can choose $\epsilon_0$ such that $\Phi_t(U_\epsilon)$ is Runge in $X$ for all $\epsilon < \epsilon_0$, for all $t \in [0,1]$.
\end{proof}


\end{document}